\documentclass[a4paper,reqno,11pt]{amsart}
\usepackage{amssymb,amsmath,amsfonts,amsthm,stmaryrd}
\usepackage{graphicx}
\usepackage{color}
\usepackage[left=1 in, right=1 in,top=1 in, bottom=1 in]{geometry}
\makeatletter
\@namedef{subjclassname@2020}{\textup{2020} Mathematics Subject Classification}
\makeatother
\newtheorem{theorem}{Theorem}[section]
\newtheorem{lemma}[theorem]{Lemma}

\numberwithin{equation}{section}
\allowdisplaybreaks

\providecommand{\norm}[1]{\left\Vert#1\right\Vert}

\newcommand{\beq}{\begin{equation}}
\newcommand{\eeq}{\end{equation}}
\def\p{\partial}
\DeclareMathOperator{\curl}{curl}
\DeclareMathOperator{\dive}{div}
\def\dis{\displaystyle}
\def\ls{\lesssim}
\def\eps{\varepsilon}

\title[Stability of Inflow Problem for Hyperbolic Systems]{Stability of Inflow Problem for Hyperbolic Systems}

\author{Yan Guo}
\address{Division of Applied Mathematics, Brown University, Providence, RI 02912, USA}
\email[Y. Guo]{yan$\_$guo@brown.edu}

\author{Yanjin Wang}
\address{School of Mathematical Sciences\\
Xiamen University\\
Xiamen, Fujian 361005, China}
\email[Y. J. Wang]{yanjin$\_$wang@xmu.edu.cn}
\subjclass[2020]{35L04, 35L65, 35Q31, 76B03, 76E05}
\keywords{Hyperbolic conservation laws; 3D incompressible Euler system; Inflow boundary condition; Shear flow; Global stability.}

\begin{document}

\begin{abstract}
Inflow BC plays a critical role in the study of hyperbolic PDE in a bounded domain. We establish $W^{1,\infty}$ stability for 1D hyperbolic conservation laws with inflow data in a bounded interval, and $W^{2,3+}$ stability of a large class of shear flows for the 3D incompressible Euler system with inflow BC in finite square or circular pipes.
\end{abstract}

 \maketitle

\section{Introduction}

Consider the scalar basic transport equation in 1D  with constant non-zero speed $\lambda\neq0$:
\[\partial_{t}f+\lambda \partial_{x}f=0.\]
The exact solution $e^{-x\frac{\lambda}{|\lambda|}+|\lambda|t}=e^{-x\text{sgn}\lambda+|\lambda|t}$ leads to
\[
\lbrack\partial_{t}+\lambda\partial_{x}]\{e^{-x\frac{\lambda}{|\lambda|}+|\lambda|t}f\}=0\text{ or }e^{-x\frac{\lambda}{|\lambda|}+|\lambda|t}f\equiv\text{constant }%
\]
along the characteristic line, in terms of the initial and boundary data. Any
non-characteristic speed creates exponential decay $e^{-|\lambda|t}$ with the
exponential spatial weight $e^{-x\frac{\lambda}{|\lambda|}}$, and $\lambda
\neq0$ is exactly needed for imposing inflow BC.   Such an exponential spatial
weight is well-known to change the spectrum, but it is rarely useful to
control nonlinearity for the Cauchy problem since $e^{-x\frac{\lambda}{|\lambda|}}$ is neither
bounded from above nor below for $x\in\mathbb{R}$. Our key observation is that
such a spatial weight $e^{-x\frac{\lambda}{|\lambda|}}\backsim1$ for bounded $x$, and hence introduces interior dissipation which is compatible with the basic inflow BC for $f$. On the other hand, the $x$-periodic BC for $f$ is not
compatible with $e^{-x\frac{\lambda}{|\lambda|}}$: $e^{-x\frac{\lambda
}{|\lambda|}}f$ destroys periodic structure of $f$ and no dissipative
effect could be obtained.

Without seeking the most comprehensive theory in this paper, we demonstrate the importance of such a dissipative mechanism for inflow BC by establishing the global stability and regularity for both 1D hyperbolic conservation laws in an interval and 3D incompressible Euler system around a large class of shear flows in either square or circular pipes.

\subsection{1D Hyperbolic Conservation Laws}

We study a general system of conservation laws in a normalized interval $I=(-1,1)\subseteq \mathbb{R}$:
\begin{equation}\label{conser}
\partial_{t}U+\partial_{x}(F(U))=0, %
\end{equation}
where the vectorial unknown $U=(U_{i}(t,x))_{i=1}^{n}$: $\mathbb{R}_{+}
\times I \rightarrow\mathbb{R}^{n}$ and $F=(F_i)_{i=1}^{n}: \mathbb{R}^{n}\rightarrow\mathbb{R}^{n}$ is a given smooth vector-valued flux function. By the chain rule, \eqref{conser} can be rewritten as
\begin{equation}\label{hyper}
\partial_{t}U+A(U)\partial_{x}U=0  %
\end{equation}
with the coefficient matrix $A(U)=\nabla_U F(U)$. Let $\bar{U}\in \mathbb{R}^{n}$ be a constant vector. We rewrite the perturbation  $U=\bar{U}+V$
so that
\begin{equation}
\p_t V+A(\bar{U}+V)\p_xV=0,\quad V|_{t=0}=V_{0}. \label{system}%
\end{equation}
We assume the structural assumption that there exist linearly independent
eigenvectors $r_{i}(\bar{U}+V)$ $(1\leq i\leq n)$ with $\lambda_{1}(\bar
{U}+V)\leq\lambda_{2}(\bar{U}+V)\leq\cdots\leq\lambda_{n}(\bar{U}+V)$ such that
for $1\leq i\leq n,$
\begin{equation}
Ar_{i}=\lambda_{i}r_{i},\text{ }\,\text{ }r_{i}\text{ and }\lambda_{i}\in
C^{\infty}\text{ near }V=0. \label{eigen}%
\end{equation}
We then define $T=(r_{i})_{i=1}^{n}$ so that
\begin{equation}
A=T\text{diag}(\lambda_{1},\lambda_{2},\dots,\lambda_{n})T^{-1}%
\equiv:T\Lambda T^{-1}. \label{inverse}%
\end{equation}
Such a smoothness assumption \eqref{eigen} is valid if $A$ is strictly hyperbolic with
distinct eigenvalues $\lambda_{1}<\lambda_{2}<\cdots<\lambda_{n}$, while it is
also valid for the ideal compressible MHD system which may not be strictly hyperbolic.

We define \textit{characteristic unknowns} $f=(f_{i})_{i=1}^n$:
\begin{equation}
f=T^{-1}V\text{ or }V=Tf. \label{fi}%
\end{equation}
We rewrite $A\p_xV=T\Lambda T^{-1}\p_xV$ and diagonalize (\ref{system}) via
multiplying with $T^{-1}$:
\begin{equation}\label{diag}
\lbrack\partial_{t}+\Lambda\partial_{x}]f=[\partial_{t}+\Lambda\partial
_{x}]T^{-1}V, %
\end{equation}
which has one loss of derivative count$.$

We take $\partial_{t}$ of (\ref{system}) to obtain
\begin{align}\label{vt}
[\p_{t}+A\p_{x}]\p_t V  &  =-\p_{t}A\p_{x}V.
\end{align}
We also denote the diagonalization for $\p_{t}V$ (a good unknown for $\p_{t}f$)
as
\begin{equation}
g:\equiv T^{-1}\p_{t}V  =\p_{t} f +T^{-1}\p_{t} T f \label{gi}%
\end{equation}
so that $g$ solves (with no loss of derivative count)
\begin{equation}
\lbrack\partial_{t}+\Lambda\partial_{x}]g=[\partial_{t}+\Lambda\partial
_{x}]T^{-1}\p_{t} V -T^{-1}\p_{t} A \p_{x} V . \label{gequation}%
\end{equation}

$\bullet$ \textit{Inflow Boundary Condition. }We denote the boundary
$
\gamma=\{-1,1\}
$ and the outward normal $\nu_{x}=x$ at $x\in \gamma$. In case of
non-characteristic BC with $\bar{\lambda}_{i}=\lambda_{i}(\bar{U})\neq0,$ we define
the outgoing set $\gamma_{+}^{i}$ and incoming set $\gamma_{-}^{i}$ associated with the linearized $i$-th family
transport operator $\partial_{t}+\bar{\lambda}_{i}\partial_{x}$ for the
characteristic unknown $f_{i}$ as
\beq
\gamma_{+}^{i}    =\{x\in\gamma: \bar{\lambda
}_{i}\nu_{x}>0\},\quad
\gamma_{-}^{i}    =\{x\in\gamma: \bar{\lambda
}_{i}\nu_{x}<0\}. \label{gamma}%
\eeq
We impose the basic inflow BC for each $f_{i}$ as%
\begin{equation}
f_{i}|_{\gamma_{-}^{i}}=b_{i},\ 1\le i\le n. \label{inflow}%
\end{equation}

Our first main result is

\begin{theorem}\label{nbyn}
Assume (\ref{eigen}), the non-characteristic condition:
\beq\label{lami}
\min_{1\leq i\leq n}|\lambda_{i}(\bar{U})|\neq0
\eeq
and the compatibility condition:
\begin{equation}
f_{0,i}|_{\gamma_{-}^{i}}=b_{i}(0),\ 1\le i\le n. \label{comp}%
\end{equation}
There exists a constant  $\eps_0>0$ such that if
\begin{equation}
\norm{V_{0}}_{W^{1,\infty}(I)}+ \norm{b}_{W^{1,\infty}(\mathbb{R}_+)}\le \eps_0, \label{small}%
\end{equation}
then there is a unique solution to \eqref{system} with inflow BC \eqref{inflow}  on $[0,\infty)$ satisfying
\begin{align}
\sup_{0\le t<\infty}\sum_{j=0}^1 \norm{\p_t^j V(t)}_{W^{1-j,\infty}(I)}+ \norm{ V|_\gamma}_{W^{1,\infty}(\mathbb{R}_+)}
\lesssim \norm{V_{0}}_{W^{1,\infty}(I)}+\norm{b}_{W^{1,\infty}(\mathbb{R}_+)} . \label{vdecay}%
\end{align}

\end{theorem}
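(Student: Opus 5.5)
We will run a weighted $L^\infty$ a priori estimate along characteristics for the diagonal unknowns $f$ of \eqref{fi} and $g$ of \eqref{gi}, combined with a continuity argument and a standard local existence theory for the inflow problem. The weight comes from the mechanism in the introduction. For each family $i$, set $w_i(x)=e^{-\beta x\,\mathrm{sgn}\bar\lambda_i}$ with $\beta>0$ fixed. Then $w_i\sim 1$ on $I$, and
\[
[\p_t+\lambda_i\p_x]\bigl(e^{\kappa t}w_i h\bigr)=e^{\kappa t}w_i\bigl([\p_t+\lambda_i\p_x]h+(\kappa-\beta|\bar\lambda_i|+O(|\lambda_i-\bar\lambda_i|))h\bigr).
\]
We choose $\kappa=\tfrac12\beta\min_i|\bar\lambda_i|>0$, which is possible by \eqref{lami}. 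Then, as long as $\|V\|_{L^\infty}$ is small, every characteristic unknown is damped at rate at least $\kappa$ in the interior.

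Because of \eqref{lami}, each backward characteristic of the $i$-th family starting from $(t,x)$ reaches either $t=0$ or $\gamma_-^i$ within time $2/\min|\lambda_i|$. We integrate the weighted equation along it.

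\textbf{Zeroth order.} For $f$, \eqref{diag} gives $[\p_t+\Lambda\p_x]f=[\p_t+\Lambda\p_x](T^{-1})Tf$. The right side is bounded by $C\|\p V\|_{L^\infty}|f|$. Together with the inflow data $b_i$ and the initial data $f_{0,i}$, this yields
\[
\sup_t e^{\kappa t}\|f\|_{L^\infty}\lesssim \|V_0\|_{L^\infty}+\sup_t e^{\kappa t}|b(t)|+\mathcal{E}\cdot\sup_t e^{\kappa t}\|f\|_{L^\infty},
\]
where $\mathcal{E}=\sup_t(\|V\|_{W^{1,\infty}}+\|\p_tV\|_{L^\infty})$. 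The data $b$ is only bounded, not decaying, so exponential decay is too much to ask. I will therefore run the argument with $\kappa=0$ and keep the weight only to absorb the linear coupling. The cleanest option is to use the dissipation from $w_i$ to absorb the $O(\mathcal{E})$ terms via a Gronwall-type argument along characteristics. Concretely, the weighted quantity $w_ih$ satisfies $\frac{d}{ds}(w_ih)\le -\tfrac12\beta|\bar\lambda_i| w_ih+\text{source}$, so $\sup|w_ih|\le \max(\text{data}, \tfrac{2}{\beta|\bar\lambda_i|}\sup|\text{source}|)$. This gives uniform-in-time bounds with sources of size $\mathcal{E}\,\sup|f|$, which are absorbable for small $\mathcal{E}$.

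\textbf{First order.} We use $g$ from \eqref{gequation}. Its right side is $([\p_t+\Lambda\p_x]T^{-1})Tg-T^{-1}\p_tA\,\p_xV$, and it contains no loss of derivatives. Here $\p_xV$ is expressed through $\p_tV$ via \eqref{system}: $\p_xV=-A^{-1}\p_tV$, which is valid since $A$ is invertible near $\bar U$ by \eqref{lami}. Hence the source is $O(\mathcal{E})|g|$. The boundary values of $g$ on $\gamma_-^i$ come from differentiating the inflow condition \eqref{inflow} in $t$. This gives $\p_tf_i=b_i'$ there, so $g_i=b_i'+(T^{-1}\p_tTf)_i$. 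Here $\p_tT$ involves $\p_tV=Tg$ at the boundary, including the outgoing components, which are traced back from the interior. The resulting terms are quadratic, hence small. The initial value of $g$ is $-T^{-1}A\p_xV_0$, controlled by $\|V_0\|_{W^{1,\infty}}$. The compatibility condition \eqref{comp} ensures continuity at the corner, so that no jump in $g$ is created and the solution stays in $W^{1,\infty}$. The estimate then closes as
\[
\mathcal{E}\lesssim \|V_0\|_{W^{1,\infty}}+\|b\|_{W^{1,\infty}}+\mathcal{E}^2.
\]
The bound on $\p_xV$ again follows from $\p_xV=-A^{-1}\p_tV$. The boundary trace bound on $V|_\gamma$ follows from its values, which are $b$ on the incoming side and otherwise controlled by the interior bounds.

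\textbf{Main obstacle and conclusion.} The main obstacle is the first-order boundary term. The time derivative of the inflow data mixes incoming and outgoing families through $T^{-1}\p_tT f$, and we must check that this coupling is genuinely quadratic. I will also need to make the characteristic-integration argument rigorous at the $W^{1,\infty}$ level, for instance by working with a smooth approximation or with Lipschitz solutions of linearized problems in an iteration scheme. Given the a priori bound, a local existence result for the linear inflow problem plus contraction in $L^\infty$ gives local existence and uniqueness. Since $\mathcal{E}$ never exceeds $2C\eps_0$, the continuation argument extends the solution to $[0,\infty)$ with \eqref{vdecay}.
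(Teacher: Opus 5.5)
Your proposal is correct and follows essentially the paper's route. The shared ingredients are the weighted characteristic $L^\infty$ bound with $w_i=e^{-\beta x\,\mathrm{sgn}\,\bar\lambda_i}$ (the paper's scalar transport theorem, where the weight supplies interior damping so the source costs only a time-independent factor), the good unknown $g=T^{-1}\partial_tV$ with $\partial_xV=-A^{-1}\partial_tV$ and quadratic inflow data $\partial_tb_i+(T^{-1}\partial_tT f)_i$, and an $L^\infty$ contraction for differences; the only difference is that the paper runs everything as a global-in-time iteration with mollified coefficients $A(\bar U+V^l_\ast)$ (exactly the smoothing you anticipate, used to show the linear iterates are Lipschitz before the derivative-loss-free $g$-estimate applies) instead of local existence plus continuation. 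One correction: the zeroth-order compatibility only makes $f$ (hence $V$) continuous across the characteristic issuing from the inflow corner, and $g$ generally does jump there since no first-order compatibility is assumed, but this is harmless because the $L^\infty$ bound for $g$ is obtained along characteristics on each side of that curve.
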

We use $A\ls B$ to denote $A\le CB$ for generic constants $C>0$. In \eqref{vdecay}, the trace
at the boundary $\gamma$ of $\p_tV\backsim g$ is well-defined in the transport theory in
view of (\ref{gequation}). The inflow data $b_i(t)$ in (\ref{inflow}) can
be any bounded smooth function with a small amplitude for all $t\geq0,$ which
plays a crucial role in practice for carrying out numerical simulations in
finite domains. Our result requires only the structural condition
(\ref{eigen}), which covers important examples such as elastodynamics,
compressible Euler system and ideal compressible MHD system.

Even though higher regularity (such as $C^{1}$) is expected with more
compatibility conditions at incoming set $\gamma_{-}^{i}$, this is beyond the
focus of our current study with a minimum assumption. In fact, our
$W^{1,\infty}$ stability rules out any possible shock, rarefaction and
contact discontinuity formation for initial small $W^{1,\infty}$ perturbation
of $\bar{U}.$ This is already in stark contrast to the Cauchy problem for
\eqref{conser}, for which it is well-known that shock will develop for
generic small smooth perturbation of $\bar{U}$ \cite{J}. Since there is an explosive literature on the BV theory of Cauchy problem for
(\ref{conser}) in 1D, it is impossible to select a fair representative reference
list. We cite only related work for general systems \cite{A,BB,BCP,BLY,D,DM,G,Goodman}, and refer the
comprehensive reference book \cite{Dbook} for more references.


\subsection{3D Incompressible Euler System}

We study the 3D incompressible Euler system in a finite pipe $\Omega=(-1,1)\times D \subseteq \mathbb{R}^3$ with unit square $D=(-1,1)^2$ or unit disk $D=B(0,1)$:
\begin{equation}\label{euler}
\begin{cases}
\partial_t  u + u\cdot\nabla  u+  \nabla  p = 0 &\text{in }\Omega,\\
{\rm div}  u = 0&\text{in }\Omega,
\end{cases}
\end{equation}
where $ u$ is the velocity field and $ p$ is the scalar pressure. 

We consider the 3D stability of shear flows:
\beq
u_s=(U,0,0)\text{ with }U=U(x_2,x_3)>0,
\eeq
and denote its vorticity
\beq\label{omegas}
\omega_s:={\rm curl}u_s=(0,\p_{3}U,-\p_{2}U).
 \eeq
 We rewrite the perturbation $ u=u_s+v$ (and $\curl u=\omega_s+\omega$), and consider the following inflow problem:
\begin{equation}\label{eulerv}
\begin{cases}
\partial_t v +(u_s+v)\cdot\nabla v + v \cdot\nabla  u_s +  \nabla  p  = 0 \quad\text{in }\Omega,\\
{\rm div} v = 0\qquad\qquad\qquad\qquad\qquad\qquad\qquad\,\text{in }\Omega,
\\ v|_{t=0}=v_0
\end{cases}
\end{equation}
with the boundary conditions
\beq\label{eulervBC}
v\cdot \nu=0 \text{ on }\gamma_0,\  v_1=v_{b,1}^\pm \text{ on }\gamma_\pm,\   \omega =\omega_b^- \text{ on }\gamma_-,
\eeq
where we have denoted the outflow and inflow boundaries and the lateral one by
\beq
\gamma_\pm:=\{\pm1\}\times D ,\quad \gamma_0:=[-1,1]\times \partial D,
\eeq
respectively, with $\nu$ the unit outward normal to $\gamma_0$. 
We note that the inflow BC for the vorticity perturbation $\omega$ in \eqref{eulerv} is naturally imposed in terms of the vorticity (transport) equations:
\begin{align}\label{omegaeq}
\begin{cases}
\p_t\omega+(u_s+v)\cdot\nabla  \omega=- v \cdot\nabla   \omega_s+  \omega_s \cdot \nabla  v+  \omega \cdot \nabla  (u_s +   v)\quad\text{in }\Omega,
\\ \omega|_{t=0}=\omega_0:=\curl v_0,\quad \omega|_{\gamma_-} =\omega_b^-.
\end{cases}
\end{align}

$\bullet$ \textit{Boundary Data. }We assume that the boundary data $v_{b,1}^\pm$ and $\omega_b^-$ satisfy
\beq\label{bbcondition}
 \int_{\gamma_-}v_b^-= \int_{\gamma_+}v_b^+
 \eeq
 and
\begin{equation}\label{bbcondition1}
\omega_{b,1}^-=0,\quad
\p_{2}(U\omega_{b,2}^-+v_b^-(\p_3U+\omega_{b,2}^-))+\p_{3}(U\omega_{b,3}^-+v_b^-(-\p_2U+\omega_{b,3}^-))=0\text{ on }\gamma_-.
\end{equation}
It is remarked that \eqref{bbcondition} is required for the divergence free condition of $v$, and \eqref{bbcondition1} guarantees that $\dive \omega=0$ on $\gamma_-$ (and thus in $\Omega$) in constructing solutions by utilizing the vorticity formulation \eqref{omegaeq}, see \cite{GKM}, where $v$ is uniquely determined by $\omega$ via the $div$--$curl$ system:
\begin{equation}\label{omegaeqv}
\begin{cases}
\curl v=\omega ,\quad \dive v =0\quad\!\text{ in }\Omega,
\\\dis v\cdot \nu=0 \text{ on }\gamma_0,\  v_1=v_{b,1}^\pm \text{ on }\gamma_\pm.
\end{cases}
\end{equation}

Our second main result is

\begin{theorem}\label{th2}
Let $3<p<\infty$. Assume \eqref{bbcondition}, \eqref{bbcondition1} and the compatibility conditions:
\begin{equation}\label{thcom}
\dive v_0=0\text{ in }\Omega,\ v_0\cdot \nu=0\text{ on }\gamma_0,\  v_{0,1}=v_{b,1}^\pm(0)\text{ on }\gamma_\pm,\
\omega_0=\omega_b^-(0)\text{ on }\gamma_-.
\end{equation}
(i) for $D=(-1,1)^2$, assume further
\begin{equation}\label{oscondition31}
\p_{i}U=0\text{ on }\{x_i=\pm1\}\cap \gamma_0,\quad \p_i v_{b,1}^\pm=0\text{ on }\gamma_-\cap\{x_i=\pm1\},\ i=2,3
\end{equation}
and
\begin{equation}\label{oscondition310}
\omega_0\times \nu=0\text{ on }\gamma_0,\quad  \omega_b^- \times \nu =0\text{ on } \gamma_-\cap\gamma_0  ;
\end{equation}
(ii) for $D=B(0,1)$, assume further
\begin{equation}\label{oscondition32}
\nabla U=0\text{ on }\gamma_0,\quad \nabla v_{b,1}^\pm=0\text{ on }\gamma_-\cap\gamma_0 
\end{equation}
and
\begin{equation}\label{oscondition320}
\omega_0 =0\text{ on }\gamma_0,\quad  \omega_b^-  =0\text{ on }   \gamma_-\cap\gamma_0 .
\end{equation}
There exists a constant  $\delta>0$ such that if
\begin{equation}
\norm{\nabla U}_{W^{2,p}(\Omega)}< \delta \frac{\min_{\bar\Omega} U^2}{1+ \min_{\bar\Omega} U},
\end{equation}
then there exists a constant  $\eps_0>0$, depending on $U$, so that if
\begin{align}
\norm{v_0}_{W^{2,p}(\Omega)}+\sup_{0\le t<\infty} \sum_{j=0}^1 \norm{\partial_t^j v_{b,1}^\pm(t)}_{W^{2-j-1/p,p}(\gamma_\pm)}+\sum_{j=0}^1\norm{\partial_t^j\omega_b^-}_{L^\infty(0,\infty;W^{1-j,p}(\gamma_-))} \le \varepsilon_0, \label{small2}%
\end{align}
then there is a unique solution to \eqref{eulerv} and \eqref{eulervBC} on $[0,\infty)$ satisfying
\begin{align}\label{goodes}
&\sup_{0\le t<\infty} \sum_{j=0}^{1}\norm{\p_t^j v(t)}_{W^{2-j,p}(\Omega)}
\nonumber\\&\quad \lesssim \norm{v_0}_{W^{2,p}(\Omega)}+\sup_{0\le t<\infty} \sum_{j=0}^1 \norm{\partial_t^j v_{b,1}^\pm(t)}_{W^{2-j-1/p,p}(\gamma_\pm)}+\sum_{j=0}^1\norm{\partial_t^j\omega_b^-}_{L^\infty(0,\infty;W^{1-j,p}(\gamma_-))} .
\end{align}
\end{theorem}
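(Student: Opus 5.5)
The proof will rest on a priori estimates for the vorticity in $W^{1,p}$ with an exponential spatial weight in $x_1$, coupled to elliptic div--curl estimates for $v$. Existence and uniqueness will then follow from a standard iteration and continuity argument built on the vorticity formulation \eqref{omegaeq}, together with the construction of \cite{GKM}. Conditions \eqref{bbcondition} and \eqref{bbcondition1} keep $\dive\omega=0$ in $\Omega$, so $v$ is recovered from $\omega$ through \eqref{omegaeqv}.

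\textbf{Elliptic estimate.} For the div--curl system \eqref{omegaeqv} I will use
\[
\norm{v}_{W^{2,p}(\Omega)}\ls \norm{\omega}_{W^{1,p}(\Omega)}+\norm{v_{b,1}^\pm}_{W^{2-1/p,p}(\gamma_\pm)},
\]
and the analogous bound for $\p_t v$ one derivative lower. On the square pipe, the corners and edges are handled by reflection: \eqref{oscondition31} and \eqref{oscondition310} make $\omega\times\nu=0$ and the Neumann-type compatibility propagate, so even/odd reflection across the faces $x_i=\pm1$ reduces the problem to a periodic setting in $(x_2,x_3)$. On the circular pipe, the stronger vanishing conditions \eqref{oscondition32} and \eqref{oscondition320} play the same role at the edge $\gamma_\pm\cap\gamma_0$.

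\textbf{Weighted transport estimate.} Since $U\ge \min U>0$, the first component of the transport speed $u_s+v$ is bounded below by $\frac12\min U$ when $v$ is small. I will test the equation for $\omega$, and for $\nabla\omega$ or alternatively for $\p_t\omega$ (which, like $g$ in \eqref{gequation}, satisfies an equation without derivative loss), against $e^{-\beta x_1}|\cdot|^{p-2}\cdot$. Integrating by parts in $x_1$ produces the interior dissipation
\[
\frac{\beta}{p}\int e^{-\beta x_1}(U+v_1)|\omega|^p,
\]
exactly as in the scalar model of the Introduction. The boundary terms on $\gamma_+$ have a good sign, the terms on $\gamma_-$ are given by $\omega_b^-$, and the terms on $\gamma_0$ vanish because $v\cdot\nu=0$. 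Spatial derivatives $\p_2,\p_3$ will be recovered from $\p_t\omega$ and the equation itself, since the $x_1$-derivative is controlled by dividing by $U+v_1$. This gives a closed $W^{1,p}$ bound on $\omega$ without tangential-derivative issues. Because $p>3$, $W^{1,p}\subset L^\infty$, so the nonlinear terms $\omega\cdot\nabla v$ are controlled by $\norm{v}_{W^{2,p}}\norm{\omega}_{W^{1,p}}$.

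\textbf{Main obstacle.} The hard part is the linear terms coming from the shear: $-v\cdot\nabla\omega_s+\omega_s\cdot\nabla v+\omega\cdot\nabla u_s$, together with their derivatives. These are not small in $\varepsilon_0$; they are only small through $\nabla U$. I plan to bound them by $\norm{\nabla U}_{W^{2,p}}\norm{v}_{W^{2,p}}\ls \norm{\nabla U}_{W^{2,p}}(\norm{\omega}_{W^{1,p}}+\text{data})$ and absorb them into the dissipation, which is of size $\beta\min U$. The weight $e^{-\beta x_1}$ distorts the norms by a factor $e^{2\beta}$, and derivatives produce factors $U^{-1}$ when $\p_1$ is solved from the equation. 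Balancing these choices of $\beta$ (of order $1$, with constants tracked) against the dissipation is what should produce the smallness condition $\norm{\nabla U}_{W^{2,p}}<\delta\min U^2/(1+\min U)$.

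\textbf{Closing.} The resulting differential inequality has the form
\[
\frac{d}{dt}\mathcal E+c\,\mathcal E\ls \text{data}+\mathcal E^{3/2},
\]
where $\mathcal E$ is the weighted $L^p$ norm of $(\omega,\nabla\omega,\p_t\omega)$. A continuity argument then gives the uniform bound \eqref{goodes}. Uniqueness follows from a weighted $L^2$ estimate on the difference of two solutions.
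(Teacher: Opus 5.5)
Your overall architecture matches the paper's. It uses weighted $L^p$ transport estimates with $w=e^{-\alpha x_1}$; the paper fixes $\alpha=1$, because passing between weighted and unweighted norms costs $e^{p\alpha}$. It proves the $W^{2,p}$ div--curl bound by reflecting across $\gamma_0$, and the smallness $\norm{\nabla U}_{W^{2,p}}<\delta\min U^2/(1+\min U)$ comes from the factor $c_1^{-1}$ lost when $\p_1\omega$ is solved from the equation. One correction: it is $\p_1\omega$, not $\p_2\omega,\p_3\omega$, that is recovered from the equation. The paper estimates $\bar\p\omega=(\p_t,\p_2,\p_3)\omega$ by differentiating the transport equation. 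These derivatives are tangential to $\gamma_-$, so their inflow values are the data $\bar\p\omega_b^-$. The commutators $\p_iU\,\p_1\omega$ are where $c_1^{-1}$ enters. Your periodization of the square pipe is a fine substitute for the paper's cutoff inside a smooth domain $E$.

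The genuine gap is the clause ``\eqref{oscondition31} and \eqref{oscondition310} make $\omega\times\nu=0$ \dots\ propagate''. Your bulk energy estimates do not produce this, yet the whole $W^{2,p}$ closure rests on it. Unless $\omega(t)\times\nu=0$ (square) or $\omega(t)=0$ (disk) holds on $\gamma_0$ for every $t$, the odd extensions of the tangential vorticity components jump and $\tilde\omega\notin W^{1,p}$. For $p>3$ the estimate then fails at the edges $\gamma_\pm\cap\gamma_0$, where no compatibility can be imposed on the outflow face.

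The paper's mechanism has two parts.
\begin{itemize}
\item Because $(u_s+v)\cdot\nu=0$ on $\gamma_0$, characteristics starting on $\gamma_0$ stay on $\gamma_0$: by ODE uniqueness they coincide with the flow of the tangential field. Hence the trace on $\gamma_0$ of a transported quantity is determined only by initial data on $\gamma_0$, inflow data on $\gamma_-\cap\gamma_0$ and forcing on $\gamma_0$.
\item On $\{x_2=1\}$, the condition $(u_s+v)_2=0$ gives $\p_2(u_s+v)_1=-(\omega_s+\omega)_3$ and $\p_2(u_s+v)_3=(\omega_s+\omega)_1$. Combine this with $\omega_{s,1}=\omega_{s,3}=0$ there (which is exactly what $\p_2U=0$ on the face buys) and $v_2=0$. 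Then the first and third components of $-v\cdot\nabla\omega_s+(\omega_s+\omega)\cdot\nabla(u_s+v)$ are linear and homogeneous in $(\omega_1,\omega_3)$, so zero data force $\omega_1=\omega_3=0$ on that face.
\end{itemize}
This must be carried through the iteration, so that each $v^{(n)}$ has vorticity satisfying the lateral condition.

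On the disk, curvature couples the components, which is why the full conditions $\omega_s=0$ and $\omega=0$ on $\gamma_0$ are imposed. Saying they ``play the same role'' is not an argument, since $r\mapsto 2-r$ does not preserve the div--curl structure. The paper extends $v_1,rv_\theta,r\omega_r$ evenly and $v_r,\omega_\theta,r\omega_1$ oddly, so that $\curl\tilde v=\tilde\omega$ holds exactly. The price is that $\dive\tilde v$ acquires terms $\zeta_{ij}\p_i\tilde v_j+\zeta\cdot\tilde v$ with $\zeta_{ij}=0$ on $r=1$, which are absorbed after localizing to a thin collar.

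Finally, \cite{GKM} is a local theory in H\"older spaces on regular domains, so it cannot start your continuity argument in this pipe. The paper instead runs the iteration globally in time, which works because all its estimates are uniform in $t$.
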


We note that our theorem is valid for any constant velocity profile, $i.e.,$  $U\equiv const>0$. With higher initial compatibility conditions and higher vanishing lateral conditions, higher regularity of the solutions is expected. However, this is beyond the focus of this paper.

Our new notion and mechanism of stability leads to 3D stability of a large class of shear flows beyond the Arnold's celebrated stability in 2D \cite{arnold}, leading to a rare construction of a global unique regular solution for the 3D incompressible Euler system \cite{C}. This is in stark contrast to recent breakthroughs both in the study of formation of singularities \cite{CH0,CH1,CH,E,EGM,WLGB} (see the survey \cite{DE} for more references), and in the study of (non-unique) weak solutions \cite{BDLIS15,DS,DS13,GKN,GR,I18,NV,Sc,Sh} (see the book \cite{BMNV} for more references). We also recall recent construction of global smooth incompressible Euler flows in the presence of rotation \cite{GPW,RT}.

Moreover, our new stable shear flows can have rather arbitrary velocity profiles (up to a constant) except with a small gradient and vanishing conditions \eqref{oscondition31} or \eqref{oscondition32} for 3D square or circular pipes, respectively, which could serve as an (local) attractor, exhibiting a rich and complex dynamics in the inviscid limit of the Navier--Stokes flows in the presence of inflow type of BC.
This is in sharp contrast to classical instability study for shear flows initiated by Lord Rayleigh \cite{R} under the $x_1$-periodic BC, see \cite{Lin1,Lin2}. Particularly, the classical unstable profile of the shear flow  $(\cos m x_2+c,0)$ with inflection points (see \cite{FH}) becomes {\it stable} in a 3D square pipe in the presence of inflow BC for $c$ sufficiently large, with $\omega_s=(0,0,m\sin m x_2)$ vanishes at the lateral boundary $\{x_2=\pm \pi\}$. For a 2D channel no vanishing condition for the scalar $\omega_s$ is needed, see the recent work \cite{GY}, and the same remark goes also for a 3D circular pipe of axisymmetric flow with no swirl. We also recall the breakthrough \cite{BM} in the study of nonlinear inviscid damping of monotone shear flows for the 2D incompressible Euler system under the $x_1$-periodic BC in Gevrey spaces, and more recent advances \cite{IJ,MZ} among others.

The local well-posedness of inflow problem for the vorticity formulation for 3D incompressible Euler system in a regular bounded domain has been recently established in H\"older spaces in Theorem 1.4 of \cite{GKM}. In order to obtain global stability, we construct the solutions in $W^{1,p}$ spaces instead for $p>3$. The subtle technical issue for nonlinear closure is to establish the $W^{2,p}$ regularity estimate in Theorem \ref{vomth} for the {\it div$-$curl} system \eqref{omegaeqv} for solving the velocity field $v$ from the vorticity $\omega$, which is invalid in general in either square or circular pipes for $p>3$ due to their non-smoothness nature at the corner $\gamma_\pm\cap\gamma_0$. Even though it is natural to impose further compatibility at  $\gamma_\pm\cap\gamma_0$ to ensure $W^{2,p}$ estimate, it is impossible for inflow BC as the outflow boundary $\gamma_+$ can not be prescribed. Instead, we seek regular extension over entire lateral surface $\gamma_0$ to avoid the corner at $\gamma_\pm$. To this end, we imposed the crucial vanishing conditions for $U$ as in \eqref{oscondition31} and \eqref{oscondition32}. Note that the normals for square pipe are given by $\nu=\pm e_i$ on $\{x_i=\pm 1\}\cap\gamma_0$, $i=2,3$, from \eqref{omegas} we deduce that \eqref{oscondition31} and \eqref{oscondition32} are equivalent to
\beq\omega_s\times \nu=0\text{ and }\omega_s=0\text{ on }\gamma_0
 \eeq
 for square and circular pipes, respectively. Our key observation is that the vorticity equations are invariant on $\gamma_0$ thanks to the non-penetration BC (Lemma \ref{inlem}), which allows us to enforce important vanishing properties for the vorticity $\omega$:
 \beq
\omega\times \nu=0\text{ and }\omega=0\text{ on }\gamma_0
\eeq
for square and circular pipes, respectively, see part (3) of Theorem \ref{vorth}. Combining with the exact geometric structure of square and circular pipes, we can use precise parity extension across the lateral surface leading to higher Sobolev regularity for $p>3$, see part (2) of Theorem \ref{vomth}.

\section{1D Hyperbolic Conservation Laws}

\subsection{Dissipation for Linear Transport Equation}

We begin with the transport inflow problem with $|\lambda|\neq0$:%
\begin{align}
&\partial_tf+\lambda \partial_x f=h,\label{transport}\\
&f|_{t=0}=f_{0},\quad f|_{\gamma_{-}}=b.\label{initial}
\end{align}

Fix $t\geq0$ and $ x\in \bar I,$ we define the characteristic curve $X(\tau;t,x)$ passing through $(t,x)$:
\begin{equation}
\frac{d}{d\tau}X(\tau;t,x)=\lambda(\tau,X(\tau;t,x)),\quad X(t;t,x)=x. \label{char}%
\end{equation}
We define the backward exit-time for $(t,x)$ as
\[
t_{b}^{-}(t,x)=\inf_{s\leq t}\{   X(s;t,x)\in \bar I\}.
\]
We note that $0\leq t_{b}^{-}(t,x)\leq t$, and $t_{b}^{-}(t,x)=t$ if and only if $(t,x)\in\{t\ge0,x\in\gamma_{-}\}\cup\{t=0,x\in \bar I\}$. We also define the
backward exit point $x_{b}^{-}(t,x)=X(t_{b}^{-}(t,x);t,x)$ such that $(t_{b}^{-},x_{b}^{-})\in\{t\ge0,x\in\gamma_{-}\}\cup\{t=0,x\in\bar I\}.$ See \cite{GKTT} for general discussion for kinetic transport equations.

\begin{lemma}\label{ode}\bigskip(Backward exit time $t_{b}^{-}$)
Assume the Lipschitz bound $\norm{\lambda}_{W^{1,\infty}(\mathbb{R}_+\times I)}<+\infty,$ and the non-characteristic condition:
\begin{equation}
\inf_{0\leq t<\infty,  x\in\bar I}|\lambda(t,x)|>0. \label{non}%
\end{equation}
Then $t_{b}^{-}(t,x)$ is unique. If $\lambda>0,$ the characteristic curve $\Gamma\equiv\{(\tau,X(\tau;0,-1)),\tau\geq0\}$, emanating from the inflow corner $(0,-1)$, splits $\{t\geq0,\ x\in\bar I\}$ into disjoint sets:
\begin{align}
\{(t,x)  &  :t_{b}^{-}(t,x)=0,\  x_{b}^{-}(t,x)>-1\}=\{(t,x):x>X(t;0,-1)\},\nonumber\\
\{(t,x)  &  :t_{b}^{-}(t,x)>0,\  x_{b}^{-}%
(t,x)=-1\}=\{(t,x):x<X(t;0,-1)\},\label{split}\\
\{(t,x)  &  :t_{b}^{-}(t,x)=0,\  x_{b}^{-}%
(t,x)=-1\}=\{(t,x):x=X(t;0,-1)\}\equiv \Gamma.\nonumber
\end{align}
If $\lambda<0,$ the characteristic curve $\Gamma\equiv\{(\tau,X(\tau;0,1)),\tau\geq0\}$, emanating from the inflow corner $(0,1)$, splits $\{t\geq0,  x\in\bar I\}$ into three disjoint sets:
\begin{align}
\{(t,x)  &  :t_{b}^{-}(t,x)=0,\  x_{b}^{-}%
(t,x)<1\}=\{(t,x):x<X(t;0,1)\},\nonumber\\
\{(t,x)  &  :t_{b}^{-}(t,x)>0,\  x_{b}^{-}%
(t,x)=1\}=\{(t,x):x>X(t;0,1)\},\label{split2}\\
\{(t,x)  &  :t_{b}^{-}(t,x)=0,\  x_{b}^{-}%
(t,x)=1\}=\{(t,x):x=X(t;0,1)\}\equiv \Gamma.\nonumber
\end{align}
Moreover, $t_{b}^{-}(t,x)>0\in
W^{1,\infty}$ up to $\Gamma,$ and $-1<x_{b}^{-}(t,x)<1\in W^{1,\infty}$ up to
$\Gamma.$
\end{lemma}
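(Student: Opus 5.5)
The argument rests on the ODE (\ref{char}) together with the sign condition (\ref{non}). Since $\lambda$ is Lipschitz and bounded on $\mathbb{R}_+\times \bar I$, I first extend $\lambda$ to a bounded Lipschitz function on $\mathbb{R}_+\times\mathbb{R}$ while keeping its sign and a positive lower bound on $|\lambda|$. By Picard--Lindel\"of, each $(t,x)$ then has a unique global characteristic $X(\tau;t,x)$. It depends Lipschitz-continuously on $(\tau,t,x)$, with Gronwall bounds $|\partial_x X|\le e^{\|\lambda\|_{W^{1,\infty}}|t-\tau|}$ and $\partial_t X(\tau;t,x)=-\lambda(t,x)\partial_xX(\tau;t,x)$. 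Continuity of $X$ and closedness of $\bar I$ make $t_b^-$ well defined, so uniqueness of $t_b^-$ reduces to uniqueness of the characteristic.

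I treat the case $\lambda>0$; the case $\lambda<0$ is symmetric under $x\mapsto -x$. Because $\lambda\ge c>0$, $\tau\mapsto X(\tau;t,x)$ is strictly increasing. Going backward from $x\in\bar I$, the curve therefore moves left and can only leave $\bar I$ through $x=-1$. Hence $t_b^-=\max\{0,s^*\}$, where $s^*$ is the unique solution of $X(s^*;t,x)=-1$; it exists and is unique by strict monotonicity and $X(s;t,x)\le x-c(t-s)$. Uniqueness of ODE solutions means characteristics cannot cross. So $X(\tau;t,x)<X(\tau;0,-1)$ for all $\tau$ when $x<X(t;0,-1)$, and likewise with the reverse inequality. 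For $x>X(t;0,-1)$, at $\tau=0$ this gives $X(0;t,x)>-1$, so the curve stays in $\bar I$ on $[0,t]$ (it is increasing and ends in $\bar I$), and $t_b^-=0$, $x_b^-=X(0;t,x)>-1$. For $x<X(t;0,-1)$ we have $X(0;t,x)<-1$, so $s^*\in(0,t]$, $t_b^-=s^*>0$ and $x_b^-=-1$. The curve $\Gamma$ itself gives the third case. Together these yield the trichotomy (\ref{split}).

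For regularity, on $\{x>X(t;0,-1)\}$ we have $t_b^-\equiv0$ and $x_b^-=X(0;t,x)$, which is $W^{1,\infty}$ by the Gronwall bounds above. On $\{x<X(t;0,-1)\}$ we have $x_b^-\equiv-1$, and $t_b^-$ is defined implicitly by $X(t_b^-;t,x)=-1$. The main difficulty is the implicit function theorem in Lipschitz (not $C^1$) regularity. I would avoid it with a direct argument: since $\partial_\tau X=\lambda\ge c$, for two points the identity $X(t_b^-(t',x');t,x)-X(t_b^-(t,x);t,x)=X(t_b^-(t',x');t,x)-X(t_b^-(t',x');t',x')$ gives $c\,|t_b^-(t',x')-t_b^-(t,x)|\le C(|t-t'|+|x-x'|)$. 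This follows from the Lipschitz dependence of $X$ on its endpoint, and makes $t_b^-$ Lipschitz up to $\Gamma$. In the smooth setting, differentiating the identity gives the explicit formulas $\partial_x t_b^-=-\partial_xX/\lambda$ and $\partial_t t_b^-=\lambda(t,x)\partial_x X/\lambda(t_b^-,-1)$, both bounded by $c^{-1}e^{C t}$. Continuity across $\Gamma$ follows from $t_b^-\to0$ and $x_b^-\to-1$ from both sides, so each piece is Lipschitz up to $\Gamma$.
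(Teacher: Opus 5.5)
Your proof is correct and follows the same route as the paper. You reduce to $\lambda>0$ and get the splitting from ODE uniqueness (non-crossing of characteristics) plus monotonicity of $\tau\mapsto X(\tau;t,x)$. On the side $t_b^-=0$ you use Lipschitz dependence of $X(0;t,x)$, and on the side $x_b^-=-1$ you use the lower bound $\partial_\tau X=\lambda\ge c>0$ to control $t_b^-$.

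The only difference is in that last step. The paper invokes the implicit function theorem for $F_-(\theta)=x+\int_t^\theta\lambda(\tau,X(\tau;t,x))\,d\tau+1$, which is $C^1$ in $\theta$ but only Lipschitz in $(t,x)$. You instead give the direct estimate $c\,|t_b^-(t',x')-t_b^-(t,x)|\le C(|t-t'|+|x-x'|)$. This is exactly the Lipschitz version of the paper's step, and it is the cleaner justification under the stated $W^{1,\infty}$ hypothesis.
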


\begin{proof}
Without loss of generality, we may assume $\lambda>0$ so that $\gamma
_{-}=\{-1\}.$ It follows from (\ref{char}) and $\lambda>0$ that $t_{b}^{-}(t,x)$ is unique. By
the uniqueness of ODE (\ref{char}), it then follows that $X(\tau
;t,x)-X(\tau;0,-1)>0$ and $X(\tau;t,x)-X(\tau;0,1)<0$ are invariant sets. By
inspection of cases of backward exits points $(t_{b}^{-},x_{b}^{-})$, we
conclude that the splitting (\ref{split}) is valid.

We next show $W^{1,\infty}$ regularity for back-time exit time $t_{b}^{-}$
and point $x_{b}^{-}.$

If $t_{b}^{-}(t,x)=0$ and $x_{b}^{-}(t,x)=X(0;t,x)>-1,$ it
follows from the continuity of ODE (\ref{char}) with respect to $(t,x)$ that $t_{b}^{-}(t^{\prime},x^{\prime})=0$ for $(t^{\prime},x^{\prime})$ near
$(t,x)$ so that $x_{b}^{-}(t^{\prime},x^{\prime})=X(0;t^{\prime},x^{\prime
}).$ Clearly, $x_{b}^{-}(t,x)\in W^{1,\infty}$ from the standard ODE theory,
uniformly up to $\Gamma$.

If $t_{b}^{-}(t,x)>0 $ and $x_{b}^{-}(t,x)=-1,$ then we have
\[
x+\int_{t}^{t_{b}^{-}}\lambda(\tau,X(\tau;t,x))d\tau=-1.
\]
We view $t_{b}^{-}$ as the zero point of the $C^{1}$ function $F_{-}(\theta)$:
\[
F_{-}(\theta)\equiv x+\int_{t}^{\theta}\lambda(\tau,X(\tau;t,x))d\tau+1,
\]
with $\frac{\partial F_{-}(\theta)}{\partial\theta}|_{\theta=t_{b}^{-}}%
\equiv\lambda(\theta,X(\theta;t,x))|_{\theta=t_{b}^{-}}>0.$ We then deduce
from the Implicit Function Theorem that $t_{b}(t,x)\in W^{1,\infty},$
uniformly up to $\Gamma$ thanks to (\ref{non}).
\end{proof}

\begin{theorem}
\label{scalar}\bigskip Assume (\ref{non}) and define the spatial weight
function:
\begin{equation}
w(x):=e^{-\alpha\frac{\lambda}{|\lambda|}x}=e^{-\alpha\text{sgn}%
 \lambda x}. \label{w}%
\end{equation}
(1) Assume
$\norm{\lambda}_{W^{1,\infty}(\mathbb{R}_+\times I)}+\norm{f_0}_{L^\infty(I)}+\norm{b}_{_{L^\infty(\mathbb{R}_+)}}+ \norm{h}_{L^\infty(\mathbb{R}_+\times I)}<+\infty.
$
Then there exists a unique solution $f$ with $f,f|_{\gamma_+}\in L^\infty$ to \eqref{transport} and
\eqref{initial} such that
\begin{equation}
\sup_{0\le t<\infty} \norm{wf(t)}_{L^\infty(I)}+ \norm{w f|_{\gamma}}_{L^\infty(\mathbb{R}_+)}     \ls  \norm{wf_0}_{L^\infty(I)}+\norm{wb}_{_{L^\infty(\mathbb{R}_+)}}+\norm{wh}_{L^\infty(\mathbb{R}_+\times I)}. \label{infinity}%
\end{equation}
(2) Assume
$\norm{\lambda}_{W^{1,\infty}(\mathbb{R}_+\times I)}+\norm{f_0}_{W^{1,\infty}(I)}+\norm{b}_{W^{1,\infty}(\mathbb{R}_+)}+ \sum_{j=0}^1\norm{\p_t^jh}_{L^\infty(\mathbb{R}_+\times I)} <+\infty
$
and the compatibility condition $
f_{0}|_{\gamma_{-}}=b(0).$ Then  $f,f|_{\gamma_+}\in W^{1,\infty}$, and for some
$\alpha>0,$
\begin{align}
&\sup_{0\le t<\infty} \norm{w\p_t f(t)}_{L^\infty(I)}+ \norm{w\p_t f|_{\gamma}}_{L^\infty(\mathbb{R}_+)}
\nonumber\\&\quad\ls  \norm{w\p_tf_0}_{L^\infty(I)}+\norm{w\p_tb}_{L^{\infty}(\mathbb{R}_+)}+\sum_{j=0}^1\norm{w\p_t^jh}_{L^\infty(\mathbb{R}_+\times I)},\label{lineardecay}
\end{align}
where $\partial_{t}f_{0}=-\lambda|_{t=0} \p_xf_{0}+h|_{t=0}.$
\end{theorem}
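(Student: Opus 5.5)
The plan is to argue entirely along the characteristics $X(\tau;t,x)$ of \eqref{char}, using Lemma \ref{ode} for the backward exit time and point. The key elementary observation is that every backward characteristic spends only a bounded time in $\bar I$. Indeed, by \eqref{non} we have $|\lambda|\ge\lambda_0>0$ and $|I|=2$, so $t-t_b^-(t,x)\le 2/\lambda_0$ for all $(t,x)$, whether the characteristic exits through $\gamma_-$ or reaches $t=0$.

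Along a characteristic we also have
\[
\frac{d}{d\tau}w(X(\tau))=-\alpha\,\mathrm{sgn}\lambda\,\lambda\, w(X(\tau))=-\alpha|\lambda|\,w(X(\tau)),
\]
so the weight produces a damping term $-\alpha|\lambda| wf$. Moreover $e^{-\alpha}\le w\le e^{\alpha}$ on $\bar I$, so $w\backsim 1$.

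For part (1), I would define $f$ by the mild formula
\[
f(t,x)=f(t_b^-,x_b^-)+\int_{t_b^-}^t h(\tau,X(\tau;t,x))\,d\tau,
\]
where $f(t_b^-,x_b^-)$ means $f_0(x_b^-)$ if $t_b^-=0$ and $b(t_b^-)$ if $x_b^-\in\gamma_-$. Uniqueness follows because the difference of two solutions satisfies the homogeneous problem and hence vanishes along every characteristic. Multiplying by $w$ and using the damping identity, I obtain
\[
|wf(t,x)|\le |w f(t_b^-,x_b^-)|+\int_{t_b^-}^t|wh|\,d\tau\le \norm{wf_0}_{L^\infty}+\norm{wb}_{L^\infty}+\tfrac{2}{\lambda_0}\norm{wh}_{L^\infty},
\]
which is uniform in $t$. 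The trace bound on $\gamma$ is the same formula evaluated at $x\in\gamma_+$; on $\gamma_-$ it is just $b$.

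For part (2), I would first show that $f\in W^{1,\infty}$. Away from the curve $\Gamma$ of Lemma \ref{ode}, this follows from the mild formula together with the $W^{1,\infty}$ regularity of $t_b^-$ and $x_b^-$ up to $\Gamma$. Across $\Gamma$, the two one-sided formulas agree thanks to the compatibility condition $f_0|_{\gamma_-}=b(0)$, so $f$ is continuous and hence globally Lipschitz. Next I would set $g=\partial_t f$. Differentiating \eqref{transport} in $t$ (rigorously via difference quotients in $t$, which commute with the mild formulation) and eliminating $\partial_x f=(h-g)/\lambda$ gives
\[
\partial_t g+\lambda\partial_x g=\frac{\partial_t\lambda}{\lambda}\,g+\partial_t h-\frac{\partial_t\lambda}{\lambda}\,h .
\]
The data are $g|_{t=0}=-\lambda|_{t=0}\partial_x f_0+h|_{t=0}=\partial_t f_0$ and $g|_{\gamma_-}=\partial_t b$.

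Along characteristics this yields
\[
\frac{d}{d\tau}(wg)=\Big(-\alpha|\lambda|+\frac{\partial_t\lambda}{\lambda}\Big)wg+w\Big(\partial_t h-\frac{\partial_t\lambda}{\lambda}h\Big).
\]
I would choose $\alpha$ so that $\alpha\lambda_0\ge\norm{\partial_t\lambda}_{L^\infty}/\lambda_0$. Then the coefficient of $wg$ is nonpositive, and integrating from $t_b^-$ to $t$ gives
\[
|wg(t,x)|\le \norm{w\partial_t f_0}_{L^\infty}+\norm{w\partial_t b}_{L^\infty}+C\sum_{j=0}^1\norm{w\partial_t^jh}_{L^\infty}.
\]
Even without this choice of $\alpha$, Gronwall over the bounded time interval $2/\lambda_0$ would give the same bound with a constant $e^{2\norm{\partial_t\lambda}_\infty/\lambda_0^2}$. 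The trace estimate on $\gamma_+$ follows by evaluating this bound at $x\in\gamma_+$, and the estimate \eqref{lineardecay} is then complete.

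I expect the main obstacle to be the rigorous justification that $g=\partial_t f$ exists in $L^\infty$ and solves the differentiated problem with the stated data. The issue is the possible kink of $f$ along $\Gamma$: its derivative is only piecewise defined there. I would handle this by checking that, under the compatibility condition, $f$ is Lipschitz across $\Gamma$. The jump of $\partial_t f$ across $\Gamma$ is then harmless in $L^\infty$, and the weak form of the $g$-equation holds on each side of $\Gamma$ separately, with the one-sided formulas glued along $\Gamma$.
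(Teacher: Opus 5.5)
Your argument is correct, but part (1) rests on a different mechanism from the paper's.

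\textbf{Part (1).} The paper never uses the fact that characteristics leave $\bar I$ in bounded time. Instead it multiplies by $e^{\alpha\lambda_m t}$ and uses the identity $[\p_t+\lambda\p_x]\{e^{\alpha\lambda_m t}wf\}+(|\lambda|-\lambda_m)\alpha\, e^{\alpha\lambda_m t}wf=e^{\alpha\lambda_m t}wh$, whose damping coefficient is nonnegative. Integrating along characteristics over all of $[0,t]$, the weight's dissipation becomes the exponential kernel $\int_0^t e^{-\alpha\lambda_m(t-\tau)}\,d\tau\le 1/(\alpha\lambda_m)$ in front of $\norm{wh}_{L^\infty}$.

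You discard the damping and instead bound the length of the integration interval by the residence time $t-t_b^-(t,x)\le 2/\lambda_0$. This is valid, since characteristics are monotone with speed at least $\lambda_0$ across an interval of length $2$. It is also more elementary. It shows the weight is essentially unnecessary in 1D: $w\backsim 1$ on $\bar I$ and the finite residence time are two faces of the same fact.

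\textbf{Part (2).} Your Gronwall variant gives \eqref{lineardecay} for every $\alpha\ge0$, with constant $e^{2\norm{\p_t\lambda}_{L^\infty}/\lambda_0^2}$, so it removes the ``for some $\alpha$'' in the statement. Your first option, making $-\alpha|\lambda|+\p_t\lambda/\lambda\le0$ pointwise along characteristics, is the same absorption the paper performs after integrating, with $\alpha\lambda_m=2\norm{\p_t\lambda/\lambda}_{L^\infty}+2$. Your $W^{1,\infty}$ gluing across $\Gamma$ via the compatibility condition is identical to the paper's.

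\textbf{What each route buys.} The paper's route gives a forcing coefficient $1/(\alpha\lambda_m)$ that can be made small by enlarging $\alpha$. It also puts the dissipation in integrated form, which carries over directly to the $L^p$ Green-identity estimates of Theorem \ref{scalar2}, where one works with norms rather than individual characteristics. Your route gives constants independent of $\alpha$.

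\textbf{Uniqueness needs tightening.} Saying the difference ``vanishes along every characteristic'' proves uniqueness only among mild solutions. The theorem claims uniqueness in the class $f,f|_{\gamma_+}\in L^\infty$, i.e. among weak solutions. For these you must justify that a distributional solution is constant along characteristics.

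The paper does this by duality. It tests the weak formulation against the solution $\phi_f$ of the adjoint problem $[\p_t+\lambda\p_x]\phi_f+\p_x\lambda\,\phi_f=0$, with $\phi_f|_{t=T}=f(T)$ and $\phi_f|_{\gamma_+}=f|_{\gamma_+}$. This yields $\norm{f(T)}_{L^2(I)}^2+\int_0^T\lambda\,(f|_{\gamma_+})^2\,dt=0$. Alternatively, since $\lambda$ is Lipschitz, you can compose with the bi-Lipschitz characteristic flow, keeping track of the inflow boundary data in the weak sense.
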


The spatial weight $w$ in (\ref{w}) induces the key exponential decay to
control the time integration for the forcing $h,$ so that a nonlinear closure
is possible later. We remark that $\p_tf$ is the key quantity to control, as
$\p_xf$ can be solved directly from (\ref{transport}) as long as $\lambda
\neq0.$

\begin{proof}
Without loss of generality, we assume $\lambda>0.$ Recall Lemma \ref{ode}. For
 $t\ge 0$ and $x\in  \bar I,$ we define
\begin{align}\label{expression}
\!f(t,x)=
\begin{cases}
\dis f_{0}(X(0;t,x))+\int_{0}^{t}h(\tau;X(\tau;t,x))d\tau,\quad\text{if
}t_{b}^{-}(t,x)=0\text{ and } x_{b}^{-}%
(t,x)>-1,\\
\dis b(t_{b}^{-}(t,x))+\int_{t_{b}^{-}(t,x)}^{t}h(\tau;X(\tau;t,x))d\tau,\quad\!\!\text{if
}t_{b}^{-}(t,x)>0\text{ and } x_{b}^{-}%
(t,x)=-1.
\end{cases}
\end{align}
From \eqref{split}, it follows that $f(t,x)$ is well-defined mild solution to \eqref{transport}
except along the 1D $W^{1,\infty}$ curve $\Gamma=\{(\tau,X(\tau;0,-1)),\tau\geq0\}$, while the
trace $f|_{\gamma_{+}}$ (letting $x\in \gamma_+=\{1\}$ in \eqref{expression}) is well-defined except the intersection point of $\Gamma$ with $[0,\infty)\times \gamma_+$.

$\bullet$ \textit{Proof of Part (1).} Recall the weight function ($\lambda
\neq0$) from (\ref{w}) such that
\[
w'=-\alpha\frac{\lambda}{|\lambda|} w=-\alpha\, \text{sgn}\lambda\, w.
\]
We denote $\lambda_{m}=\inf|\lambda|>0.$ Then we have from (\ref{transport}), away from $\Gamma,$
\begin{align*}
[\p_t +\lambda\p_x]\{e^{\alpha\lambda_{m}t}%
wf\} +(|\lambda|-\lambda_{m})\alpha\{e^{\alpha\lambda_{m}t}wf\}  &
=e^{\alpha\lambda_{m}t}wh.
\end{align*}
We approximate $|f|$ by $\sqrt{f^{2}+\kappa}$ as $\kappa\rightarrow0,$ and
multiply with $\partial_{f}\sqrt{f^{2}+\kappa}=\frac{f}{\sqrt{f^{2}+\kappa}}$
to obtain
\begin{align}\label{fabsolute}
  [\p_t  +\lambda\p_x]\{e^{\alpha
\lambda_{m}t}w\sqrt{f^{2}+\kappa}\} +(|\lambda|-\lambda_{m})\alpha
\{e^{\alpha\lambda_{m}t}w\sqrt{f^{2}+\kappa}\}
  =e^{\alpha\lambda_m t}wh\frac{f}{\sqrt{f^{2}+\kappa}}.
\end{align}
Since $|\lambda|-\lambda_{m}\geq0$ and $\frac{|f|}{\sqrt{f^{2}+\kappa}}\leq1,$
we integrate \eqref{fabsolute} along the backward characteristic curve (\ref{char}) and
(\ref{expression}) with $X(t;t,x)=x$ to obtain, away from $\Gamma,$
\begin{align}
e^{\alpha\lambda_{m}t}w(x)\sqrt{f^{2}(t,x) +\kappa}%
 \leq& \max\left\{w(X(0))\sqrt{f_0^{2}(X(0))+\kappa},\ e^{\alpha\lambda_{m}t_{b}}w(-1)\sqrt{f^{2}(t_{b},-1)+\kappa}%
\right\}\nonumber \\
&  +\int_{0}^{t}e^{\alpha\lambda_m\tau}w( X(\tau))|h(\tau,X(\tau
))|d\tau.\nonumber
\end{align}
Dividing $e^{\alpha\lambda_{m}t}$ and taking $\kappa\rightarrow0$, from
$t_{b}\leq t,$ we obtain
\begin{align}
w(x)|f(t,x)|  &  \leq\max\left\{\norm{wf_{0}}_{L^\infty(I)},\norm{wb}_{L^\infty(\mathbb{R}_+)}\right\}+\int_{0}%
^{t}e^{-\alpha\lambda_{m}(t-\tau)}w( X(\tau))|h(\tau,X(\tau
))|d\tau\nonumber\\
&  \leq \norm{wf_{0}}_{L^\infty(I)}+\norm{wb}_{L^\infty(\mathbb{R}_+)}+\frac{1}{\alpha\lambda_{m}%
}\norm{wh}_{L^\infty(\mathbb{R}_+\times I)},\label{identity2}
\end{align}
where the crucial decay from $e^{\alpha\lambda_{m}t}$ yields
\[
\int_{0}^{t}e^{-\alpha\lambda_{m}(t-\tau)}d\tau=\frac{1}{\alpha\lambda_{m}%
}.
\]
For each $t\ge 0$ taking $L^\infty$ over $x\in I$ of \eqref{identity2} and for $x=1$ taking $L^\infty$ over $t\in\mathbb{R}_+$, we then conclude (\ref{infinity}).

The mild solution $f$ with trace $f_{\gamma_{+}}$ defined in (\ref{expression}) satisfies the weak formulation of
\[
\langle f,\phi\rangle|_{t=T}-\langle f_0,\phi\rangle|_{t=0}-(f,[\partial_{t}+\lambda\partial_{x}%
]\phi)-(f,\partial_{x}\lambda\phi)+\langle\lambda f_{\gamma_{+}},\phi\rangle|_{(0,T)\times\gamma
_{+}}-\langle\lambda b,\phi\rangle|_{(0,T)\times\gamma_{-}}=(f,h)
\]
for any test function such that $\phi$, $[\partial_{t}+\lambda\partial_{x}]\phi,$ $\phi|_{\gamma_{\pm}}$ and $\phi|_{t=0},\phi|_{t=T}\in L^1$. Here $(\cdot
,\cdot)$\ and $\langle\cdot,\cdot\rangle$ are $L^{2}$ inner product in the
bulk, and at boundary $t=T,$ $t=0$, $(0,T)\times\gamma_{+} $ and
$(0,T)\times\gamma_{-} $, respectively. To show uniqueness within the
class of $f,f_{\gamma_{+}}\in L^\infty$, we may
take $b\equiv f_{0}\equiv h\equiv0.$ We solve a dual problem to construct a
test function for $0\leq t\leq T,$
\begin{align*}
&\lbrack\partial_{t}+\lambda\partial_{x}]\phi_{f}+\partial_{x}\lambda\phi_{f}
 =0,\\
&\phi_{f}|_{t=T}   =f(T),\quad \phi_{f}|_{\gamma_{+}}=f_{\gamma_{+}}.
\end{align*}
We then deduce uniqueness by testing against such a $\phi_{f}$: $\|f(T)\|_{L^2(I)}%
^{2}+\int_{0}^{T}\lambda f_{\gamma_{+}}^{2}=0.$

$\bullet$ \textit{Proof of Part (2).} We first establish $f\in W^{1,\infty},$
and then obtain $\partial_{t}f$ estimate. Since $b,h\in W^{1,\infty}$ so that
$f_{0}|_{x=-1}$ and $b|_{t=0}$ are well-defined and the compatibility
condition is now imposed from (\ref{initial}) at $t=0:$
\begin{equation}
f_{0}(-1)=b(0). \label{compatible}%
\end{equation}
Recall (\ref{expression}). Clearly, if $t_{b}^{-}>0$, by Lemma \ref{ode},
$t_{b}^{-}\in W^{1,\infty},$ so that $b(t_{b}(t,x))$ is well-defined near
$(t,x)$ and $b(t_{b})\in W^{1,\infty}$ and so is $f$ near $(t,x);$ if
$t_{b}^{-}=0$ and $x_{b}^{-}>-1,$ then $f_{0}(X(0;t,x))$ is well-defined near
$(t,x)$ so $f\in W^{1,\infty}$ near $(t,x).$ It follows from (\ref{split})
that $f\in W^{1,\infty}$ except $\Gamma$ but up to $\Gamma$ from both sides.
On the other hand, if $t_{b}^-=0$ and $ x_b^-=X(0;t,x)=-1,$ then we define $f$ from
(\ref{compatible}) via either definition for $t_{b}^->0$ or $t_{b}^-=0$ in
(\ref{expression}):
\begin{align*}
f(t,x)  &  =f_{0}(X(0;t,x))+\int_{0}^{t}h(\tau;X(\tau;t,x))d\tau\\
&  =f_{0}(-1)+\int_{0}^{t}h(\tau;X(\tau;t,x))d\tau=b(0)+\int_{0}^{t}%
h(\tau;X(\tau;t,x))d\tau.
\end{align*}
We now vary $(t,x)$
for $t\geq0\,\ $and $x\in\bar I $ to see that $f\in W^{1,\infty}$ except along the 1D
$W^{1,\infty}$ curve $\Gamma$ while continuous
along this curve. We then conclude that $f\in W^{1,\infty}$. See \cite{GKTT} for
more detailed discussion for kinetic equations.

We next take $\p_t$ of \eqref{transport} to obtain%
\begin{align*}
&\lbrack\partial_{t}+\lambda\partial_{x}]\partial_{t}f+\p_t\lambda\partial
_{x}f=\p_t h,
\\&
\partial_{t}f|_{t=0}=-\lambda|_{t=0}\partial_{x}f_{0}+h|_{t=0},\quad\partial_{t}f|_{\gamma_{-}}=\p_tb.
\end{align*}
We also solve from \eqref{transport} for $\lambda\neq0$:
\begin{equation}
\partial_{x}f=\frac{1}{\lambda}\{h-\partial_{t}f\}\nonumber
\end{equation}
so that $\partial_{t}f$ satisfies
\beq
\lbrack\partial_{t}+\lambda\partial_{x}]\partial_{t}f+\frac{\p_t\lambda
}{\lambda}\{h-\partial_{t}f\}=\p_t h. \label{fx}
\eeq
We then apply the estimate (\ref{identity2}) to \eqref{fx} with a new forcing $\p_th-\frac
{\p_t\lambda}{\lambda}\{h-\partial_{t}f \}$:
\begin{align*}
& \sup_{0\le t<\infty} \norm{w\p_tf(t)}_{L^\infty(I)}+ \norm{w\p_tf|_\gamma}_{L^\infty(\mathbb{R}_+)}  \\
& \quad \leq \norm{w\p_tf_{0}}_{L^\infty(I)}+\norm{w\p_tb}_{L^{\infty}(\mathbb{R}_+)}+\frac{1}{\alpha\lambda_{m}}\norm{w\p_th}_{L^\infty(\mathbb{R}_+\times I)}
\\
& \qquad +\frac{1}{\alpha\lambda_{m}}\norm{\frac
{\p_t\lambda}{\lambda}}_{L^\infty(\mathbb{R}_+\times I)}\left\{ \norm{w h}_{L^\infty(\mathbb{R}_+\times I)}+\norm{w\partial
_{t}f}_{L^\infty(\mathbb{R}_+\times I)}\right\}.
\end{align*}
Choosing\ $\alpha\lambda_{m}=2\norm{\frac
{\p_t\lambda}{\lambda}}_{L^\infty(\mathbb{R}_+\times I)}+2,$ we
conclude (\ref{lineardecay}).
\end{proof}

\subsection{Hyperbolic Systems}

To avoid the loss of derivative count (cf. \eqref{diag}) in constructing solutions to the nonlinear system \eqref{system},
we define the iteration by solving $V^{l+1}$ for the following smooth approximation:
\begin{equation}\label{nsystem}
\p_tV^{l+1}+A_{l}^{\ast}\p_xV^{l+1}=0,\quad V^{l+1}|_{t=0}={T_{l}^\ast}|_{t=0}T^{-1}_0V_{0},
\end{equation}
where
\begin{equation}\label{star}
V^l_\ast=\eta_{\frac{1}{l}}\ast\widetilde{V^l},\  A_{l}^{\ast}=A(\bar{U}+V_{\ast}^{l}),\ T_{l}^\ast=T(\bar
{U}+V_{\ast}^{l}),\ \Lambda_{l}^\ast=\Lambda(\bar{U}+V_{\ast}^{l}).
\end{equation}
Here  $\eta$ is the standard mollifier, and the extension $\widetilde{V^l}$ is defined to be trivial $V^l(0,x)$ for $t\leq0$, $V^l(t,-1)$ for $x\leq-1$ and $V^l(t,1)$ for $x\geq1$.

We start with $f^1=0$, and then solve iteratively $f^{l+1}:={T_{l}^\ast}^{-1}V^{l+1}$ from \eqref{nsystem}: for $1\le i\le n$,
\begin{equation}\label{fn}
\begin{cases}
\lbrack\partial_{t}+\lambda_{li}^{\ast}\partial_{x}\rbrack f_{i}^{l+1}=\{[\partial
_{t}+\lambda_{li}^{\ast}\partial_{x}]{T_{l}^\ast}^{-1}T_{l}^\ast f^{l+1}\}_{i},
\\
f_{i}^{l+1}|_{t=0}=f_{0,i},\quad f_{i}^{l+1}|_{\gamma_{-}^{i}}=b_{i} %
\end{cases}
\end{equation}
with the compatibility condition for continuity:
\beq\label{jcom}
f_{0,i}|_{\gamma_{-}^{i}%
}=b^{i}(0).
\eeq

$\bullet$ \textit{Induction Hypothesis:} We assume that
\begin{equation}
\sup_{0\le t<\infty}\sum_{j=0}^1 \norm{\p_t^j V^l(t)}_{W^{1-j,\infty}(I)}+\norm{ V^l|_{\gamma}}_{W^{1,\infty}(\mathbb{R}_+)} \leq \delta\ll1. \label{inductionl}%
\end{equation}
By \eqref{inductionl}, the approximation $V_{\ast}^{l}$ from \eqref{star} satisfies
\begin{align}
\|V_{\ast}^{l}\|_{C^1}\lesssim\delta,
\ \|V_{\ast}^{l}-V^{l}\|_{C^0}\lesssim\frac{1}{l},
\text{ but }\|V_{\ast}^{l}\|_{C^{2}}\lesssim l.  \label{v*}
\end{align}
Then by \eqref{eigen} and \eqref{lami},
\beq\label{iposi}
\min_{1\leq i\leq n}\inf_{0\leq t<\infty,  x\in\bar I}|\lambda_{li}^\ast  |    \geq\frac{1}%
{2}\min_{1\leq i\leq n}|  \lambda_{i}(\bar U)|>0
\eeq
and
\begin{align}\label{lambdal}
&\|A_{l}^{\ast}\|_{C^0}, \|A_{l}^{\ast-1}\|_{C^0},\|T_{l}^{\ast}\|_{C^0}, \|T_{l}^{\ast-1}\|_{C^0} , \|\Lambda^\ast \|_{C^0} \lesssim1,
\\
&\|\p_t A_{l}^{\ast}\|_{C^0},\|\p_x A_{l}^{\ast}\|_{C^0}, \|\p_t T_{l}^{\ast}\|_{C^0}, \|\p_x T_{l}^{\ast}\|_{C^0},\|\p_t\Lambda^\ast \|_{C^0} \lesssim \delta
.\label{lambdal2}
\end{align}

We first show qualitatively $f^{l+1}\in W^{1,\infty}$ for fixed $l$.
\begin{lemma}
Assume \eqref{inductionl} and \eqref{jcom}. There is a unique solution $f^{l+1}$ to \eqref{fn} such that
\beq
\sup_{0\le t<\infty}\norm{f^{l+1}(t)}_{L^\infty(I)}+ \norm{f^{l+1}|_{\gamma}}_{L^\infty(\mathbb{R}_+)}\lesssim \norm{V_0}_{L^\infty(I)}
+\norm{b}_{L^{\infty}(\mathbb{R}_+)}\label{claim}
\eeq
and
\beq
\sup_{0\le t<\infty}\norm{\partial_{t}f^{l+1}(t)}_{L^\infty(I)}+ \norm{\partial_{t}%
f^{l+1}|_{\gamma}}_{L^{\infty}(\mathbb{R}_+)}
\lesssim l\left\{\norm{V_{0}}_{W^{1,\infty}(I)}+\norm{b}_{W^{1,\infty}(\mathbb{R}_+)} \right\}.
\eeq

\end{lemma}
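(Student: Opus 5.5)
The plan is to treat \eqref{fn}, for fixed $l$, as a \emph{linear} system of $n$ scalar transport equations that are coupled only at zeroth order, and to apply Theorem \ref{scalar} componentwise. Expanding the right-hand side with $V^{l+1}=T_l^\ast f^{l+1}$ and the product rule, we write
\[
\{[\partial_t+\lambda_{li}^\ast\partial_x]{T_l^\ast}^{-1}T_l^\ast f^{l+1}\}_i=\sum_{k}B^l_{ik}f^{l+1}_k,
\qquad B^l=[\partial_t+\Lambda_l^\ast\partial_x]({T_l^\ast}^{-1})\,T_l^\ast,
\]
so that $|B^l|\lesssim\delta$ by \eqref{lambdal}--\eqref{lambdal2}, while $|\partial_tB^l|\lesssim\|V_\ast^l\|_{C^2}\lesssim l$ by \eqref{v*}. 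The speeds $\lambda^\ast_{li}$ are Lipschitz uniformly in $t$ and non-characteristic by \eqref{iposi}, so Lemma \ref{ode} and Theorem \ref{scalar} apply to each $i$. The weights $w_i=e^{-\alpha\,\mathrm{sgn}\,\bar\lambda_i x}$ satisfy $e^{-\alpha}\le w_i\le e^{\alpha}$ on $\bar I$, so weighted and unweighted norms are equivalent, with constants depending only on $\alpha$.

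Existence, uniqueness and \eqref{claim} come from a Picard iteration (or contraction mapping) in $L^\infty$. Given $F=(F_k)$, solve each scalar problem with forcing $h_i=\sum_k B^l_{ik}F_k$, data $f_{0,i}$ and $b_i$; this map is well defined by Theorem \ref{scalar}(1). By \eqref{identity2}, the difference of two iterates satisfies
\[
\sup_t\|w(f-\tilde f)\|_{L^\infty}\le \frac{C\delta}{\alpha\lambda_m}\,\sup_t\|(F-\tilde F)\|_{L^\infty}.
\]
This is a contraction because $\delta\ll1$; alternatively, it is a contraction for $\alpha$ large once the weight equivalence constant is accounted for. I would fix $\alpha=1$ and use the smallness of $\delta$. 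The fixed point then satisfies \eqref{infinity} with the $h$-term bounded by $C\delta\sup_t\|f^{l+1}\|_{L^\infty}$, which is absorbed. Since $|f_0|\lesssim|V_0|$ by \eqref{lambdal}, this gives \eqref{claim}, including the boundary trace. Uniqueness follows from the same contraction estimate, or from the duality argument in Theorem \ref{scalar}(1).

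For the $\partial_t$ bound, I would apply Theorem \ref{scalar}(2) componentwise, with the compatibility condition \eqref{jcom} ensuring that $f^{l+1}\in W^{1,\infty}$. The regularity can be obtained by running the same iteration in $W^{1,\infty}$: for fixed $l$ all coefficients are smooth with $l$-dependent bounds, so the iterates stay in $W^{1,\infty}$ and converge there. The forcing now satisfies $\partial_th_i=\sum_k(\partial_tB^l_{ik}f^{l+1}_k+B^l_{ik}\partial_tf^{l+1}_k)$, and \eqref{lineardecay} then gives
\[
\sup_t\|w\partial_tf^{l+1}\|+\|w\partial_tf^{l+1}|_\gamma\|\lesssim \|\partial_tf_{0}\|+\|\partial_tb\|+l\sup_t\|f^{l+1}\|+\delta\sup_t\|\partial_tf^{l+1}\|.
\]
Here $\partial_tf^{l+1}|_{t=0}=-\Lambda^\ast_l\partial_xf_0+B^lf_0$ is bounded by $\|V_0\|_{W^{1,\infty}}$. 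We absorb the $\delta$ term and insert \eqref{claim} to obtain the claimed bound with the factor $l$.

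The main obstacle is the a priori absorption. The term $\delta\,\partial_tf^{l+1}$ can only be absorbed once $\sup_t\|\partial_tf^{l+1}\|_{L^\infty}$ is known to be finite, which is why the qualitative $W^{1,\infty}$ regularity on all of $[0,\infty)$ must be established first. In that step one has to check that the $W^{1,\infty}$ gluing across the curves $\Gamma_i$ in Lemma \ref{ode} persists for the coupled system, which relies on \eqref{jcom}. As a fallback, one can derive the estimate on $[0,T]$ for each $T$, where finiteness is clear, and then let $T\to\infty$, since the constants are independent of $T$.
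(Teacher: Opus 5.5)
Your proposal is correct and follows essentially the paper's route: a Picard iteration for fixed $l$ that contracts in $L^\infty$ because $|B^l|\lesssim\delta$, followed by $\partial_t$ bounds in which $\partial_t B^l\sim\|V^l_\ast\|_{C^2}\lesssim l$ produces the factor $l$. The paper settles your ``a priori absorption'' concern as in your fallback: each iterate $f^{l+1,j+1}$ solves a scalar problem with known forcing, so its $\partial_t$ norm is finite by Theorem \ref{scalar}(2), and the bound is proved uniformly in $j$ before letting $j\to\infty$. One aside is wrong but unused: taking $\alpha$ large does not give a contraction when the $\bar\lambda_i$ have mixed signs, since $w_i/w_k$ can reach $e^{2\alpha}$ and this beats the $1/(\alpha\lambda_m)$ gain, so fixing $\alpha$ and using $\delta\ll1$ is the right choice.
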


\begin{proof}
For fixed $l,$ we use a further iteration $f_{i}^{l+1,j}$ with $f_{i}^{l+1,1}=0$ to solve $f_{i}^{l+1}$ in \eqref{fn}:
\beq\label{fnj}
\begin{cases}
\lbrack\partial_{t}+\lambda_{li}^{\ast}\partial_{x}\rbrack f_{i}^{l+1,j+1}=\{[\partial
_{t}+\lambda_{li}^{\ast}\partial_{x}]{T_{l}^\ast}^{-1}T_{l}^\ast f^{l+1,j}\}_{i},
\\
f_{i}^{l+1,j+1}|_{t=0}=f_{0,i},\quad f_{i}^{l+1,j+1}|_{\gamma_{-}^{i}}=b_{i}.
\end{cases}
\eeq
Recall \eqref{iposi}. We therefore apply \eqref{infinity} in Theorem \ref{scalar} to \eqref{fnj}: defining $w=w_{i}$ as in \eqref{w} for each $\lambda=\lambda_{li}^{\ast}$ and noting $w_i\sim 1$, by \eqref{lambdal} and \eqref{lambdal2},
\begin{align}\label{j}
&\sup_{0\le t<\infty} \norm{f_{i}^{l+1,j+1}(t)}_{L^\infty(I)}+ \norm{f_{i}^{l+1,j+1}|_{\gamma}}_{L^{\infty}(\mathbb{R}_+)}
  \nonumber\\& \quad  \lesssim \norm{f_{0,i}}_{L^\infty(I)}
+\norm{b_i}_{L^{\infty}(\mathbb{R}_+)} + \sup_{0\le t<\infty}\norm{\{[\partial
_{t}+\lambda_{li}^{\ast}\partial_{x}]{T_{l}^\ast}^{-1}T_{l}^\ast f^{l+1,j}\}_i(t)}_{L^\infty(I)}
 \nonumber\\&  \quad \lesssim \norm{V_0}_{L^\infty(I)}
+\norm{b}_{L^{\infty}(\mathbb{R}_+)} + \delta\sup_{0\le t<\infty}\norm{ f^{l+1,j}(t)}_{L^\infty(I)}.
\end{align}
For $\delta\ll1,$ it follows that from summation over $1\leq i\leq n$,%
\beq\label{fjes}
\sup_{0\le t<\infty} \norm{f^{l+1,j+1}(t)}_{L^\infty(I)}+ \norm{f^{l+1,j+1}|_{\gamma}}_{L^{\infty}(\mathbb{R}_+)} \lesssim \norm{V_0}_{L^\infty(I)}
+\norm{b}_{L^{\infty}(\mathbb{R}_+)}
\eeq
by induction on $j$.
Moreover, $\{f^{l+1,j+1}-f^{l+1,j}\}$ satisfies
\beq\label{jcon}
\begin{cases}
\lbrack\partial_{t}+\lambda_{li}^{\ast}\partial_{x}\rbrack\{f_{i}^{l+1,j+1}-f_{i}^{l+1,j}\}
=\{\lbrack\partial_{t}+\lambda_{li}^{\ast}\partial_{x}\rbrack {T_{l}^\ast}^{-1}T_{l}^\ast\{f^{l+1,j}%
-f^{l+1,j-1}\}\}_i,\\
\{f_{i}^{l+1,j+1}-f_{i}^{l+1,j}\}|_{t=0}=0,\quad   \{f_{i}^{l+1,j+1}%
-f_i^{l+1,j}\}|_{\gamma_{-}^i}=0.
\end{cases}
\eeq
We apply the estimate \eqref{j} to \eqref{jcon} to conclude that $f^{l+1,j}$ is Cauchy in $L^{\infty}$ for
$\delta\ll1$:
\begin{align}
\sup_{0\le t<\infty}\norm{\left\{f^{l+1,j+1}-f^{l+1,j}\right\}(t)}_{L^\infty(I)}&\lesssim\delta
\sup_{0\le t<\infty}\norm{\left\{f^{l+1,j}-f^{l+1,j-1}\right\}(t)}_{L^\infty(I)}\nonumber
\\&<\frac{1}{2}\sup_{0\le t<\infty}\norm{\left\{f^{l+1,j}-f^{l+1,j-1}\right\}(t)}_{L^\infty(I)}.
\end{align}

Now taking $\partial_{t}$ of (\ref{fnj}) yields
\beq
\begin{cases}
\lbrack\partial_{t}+\lambda_{li}^{\ast}\partial_{x}\rbrack \p_t f_{i}^{l+1,j+1}=-\p_t\lambda_{li}^{\ast}\partial_{x}  f_{i}^{l+1,j+1}+\{[\partial
_{t}+\lambda_{li}^{\ast}\partial_{x}]{T_{l}^\ast}^{-1}T_{l}^\ast \p_t f^{l+1,j}\}_{i}
\\\qquad\qquad\qquad\qquad\qquad\ +\{\p_t\{[\partial
_{t}+\lambda_{li}^{\ast}\partial_{x}]{T_{l}^\ast}^{-1}T_{l}^\ast \} f^{l+1,j}\}_{i},\\
\partial_{t}f_{i}^{l+1,j+1}|_{t=0}   =-\lambda_{li}^{\ast}|_{t=0}\partial_{x} f_{0,i}+\{[\partial
_{t}+\lambda_{li}^{\ast}\partial_{x}]{T_{l}^\ast}^{-1}T_{l}^\ast|_{t=0} f_0\}_{i}, \,\partial_{t}f_{i}^{l+1,j+1}|_{\gamma_{-}^i}=\partial_{t}b_{i}.
\end{cases}
\eeq
We also deduce that from (\ref{fnj}),
\beq
|\partial_{x}f_i^{l+1,j+1}|
=\frac{1}{|\lambda_{li}^{\ast}|}|\partial_{t}%
f_i^{l+1,j+1}-\{[\partial
_{t}+\lambda_{li}^{\ast}\partial_{x}]{T_{l}^\ast}^{-1}T_{l}^\ast f^{l+1,j}\}_{i}\}|\label{jx}
 \lesssim|\partial_{t}f^{l+1,j+1}|+\delta|f^{l+1,j}|.\nonumber
\eeq
By (\ref{infinity}) in Theorem \ref{scalar}, we deduce from \eqref{v*} and \eqref{fjes} that
\begin{align*}
&\sup_{0\le t<\infty}\norm{\partial_{t}f_i^{l+1,j+1}(t)}_{L^\infty(I)}+\norm{\partial_{t}%
f_i^{l+1,j+1}|_{\gamma}}_{L^\infty(\mathbb{R}_+)}
\\&\quad \ls \norm{\lambda_{li}^{\ast}|_{t=0}\partial_{x} f_{0,i}-\{[\partial
_{t}+\lambda_{li}^{\ast}\partial_{x}]{T_{l}^\ast}^{-1}T_{l}^\ast|_{t=0} f_0\}_{i}}_{L^\infty(I)}+\norm{\p_t b_i}_{L^{\infty}(\mathbb{R}_+)}
\\&\qquad
+\sup_{0\le t<\infty}\norm{\p_t\lambda_{li}^{\ast}\partial_{x}  f_{i}^{l+1,j+1}-\{[\partial
_{t}+\lambda_{li}^{\ast}\partial_{x}]{T_{l}^\ast}^{-1}T_{l}^\ast \p_t f^{l+1,j}\}_{i}}_{L^\infty(I)}
\\&\qquad
+\sup_{0\le t<\infty}\norm{\{\p_t\{[\partial
_{t}+\lambda_{li}^{\ast}\partial_{x}]{T_{l}^\ast}^{-1}T_{l}^\ast \} f^{l+1,j}\}_{i}(t)}_{L^\infty(I)}
\\&\quad  \lesssim \norm{f_{0}}_{W^{1,\infty}(I)}+\norm{\p_t b}_{L^{\infty}(\mathbb{R}_+)}+\delta\sup_{0\le t<\infty} \norm{\partial_{x}f^{l+1,j+1}(t)}_{L^\infty(I)}
\\&\qquad +\delta\sup_{0\le t<\infty} \norm{\partial_{t}f^{l+1,j}(t)}_{L^\infty(I)}  +l\sup_{0\le t<\infty} \norm{f^{l+1,j}(t)}_{L^\infty(I)}\\
&  \quad\lesssim l\left\{\norm{V_{0}}_{W^{1,\infty}(I)}+\norm{ b}_{W^{1,\infty}(\mathbb{R}_+)}\right\}
+\delta\sup_{0\le t<\infty} \norm{\partial_{t}f^{l+1,j+1}(t)}_{L^\infty(I)}
\\&  \qquad  +\delta\sup_{0\le t<\infty} \norm{\partial_{t}f^{l+1,j}(t)}_{L^\infty(I)}.
\end{align*}
We then have for $\delta\ll1,$
\beq
\sup_{0\le t<\infty} \norm{\partial_{t}f^{l+1,j+1}(t)}_{L^\infty(I)}+ \norm{\partial_{t}%
f^{l+1,j+1}|_{\gamma}}_{L^\infty(\mathbb{R}_+)}
\lesssim l\left\{\norm{V_{0}}_{W^{1,\infty}(I)}+\norm{ b}_{W^{1,\infty}(\mathbb{R}_+)} \right\}
\eeq
by induction on $j$. We thus conclude the proof by taking $j\rightarrow\infty$.
\end{proof}

We now ready to prove our main Theorem \ref{nbyn}.

\begin{proof}[\bf Proof of Theorem \ref{nbyn}]
Note that
$\norm{V^{l+1}}_{L^\infty}$ is bounded from (\ref{claim}).

\textit{Step 1.} \textit{Uniform }$\norm{\partial_{t}V^{l+1}}_{L^\infty
}$ \textit{estimate}. We take $\partial_{t}$ of (\ref{nsystem}) such that%
\begin{equation}
\lbrack\partial_{t}+ A_{l}^{\ast}\partial_{x}\rbrack\partial_{t}V^{l+1}%
=-\partial_{t}A_{l}^{\ast}{}\partial_{x}V^{l+1} ,\quad\partial_{t}V^{l+1}|_{t=0}=-A_{l}^{\ast}|_{t=0}\p_x({T_{l}^\ast}|_{t=0}T^{-1}_0V_{0}).
\end{equation}
We further define the good unknown for $\partial_{t}V^{l+1}$ as in (\ref{gi}):
\begin{equation}
g^{l+1}\equiv:T_{l}^{\ast-1}\partial_{t}V^{l+1}=\p_tf^{l+1}+T_{l}^{\ast-1}\p_tT_{l}^\ast f^{l+1}
\label{gg}%
\end{equation}
such that
\beq\label{g}
\begin{cases}
\lbrack\partial_{t}+\lambda_{li}^{\ast}\partial_{x}\rbrack g_{i}^{l+1}    =\{\lbrack\partial_{t}+\lambda_{li}^{\ast}\partial_{x}\rbrack T_{l}^{\ast-1}T_{l}^{\ast}g^{l+1}- T_{l}^{\ast-1} \partial_{t}A_{l}^{\ast}\partial_{x}V^{l+1}\}_i,
\\ g_{i}^{l+1}|_{t=0}= \{-T_{l}^{\ast-1}A_{l}^{\ast}|_{t=0} \p_x ({T_{l}^\ast}|_{t=0}T^{-1}_0V_{0}) \}_i,\quad
g_{i}^{l+1}|_{\gamma_{-}^{i}} =\p_t b_{i}+\{T_{l}^{\ast-1}\p_tT_{l}^\ast f^{l+1}\}_i.
\end{cases}
\eeq
From (\ref{gg}),
\beq\label{g_x}%
|g^{l+1}|\lesssim|\p_tV^{l+1}|\lesssim|g^{l+1}|.
\eeq
Also, from (\ref{nsystem}),
\begin{equation}
|\p_xV^{l+1}|\lesssim|\p_tV^{l+1}| .  \label{V_x}%
\end{equation}
We apply \eqref{infinity} of Theorem \ref{scalar} to \eqref{g} to have, by \eqref{claim},
\begin{align*}
&\sup_{0\le t<\infty} \norm{g_i^{l+1}(t)}_{L^\infty(I)}+\norm{g_i^{l+1}|_{\gamma}}_{L^\infty(\mathbb{R}_+)}
\\&\quad \ls  \norm{\{-T_{l}^{\ast-1}A_{l}^{\ast}|_{t=0} \p_x ({T_{l}^\ast}|_{t=0}T^{-1}_0V_{0}) \}_i}_{L^\infty(I)}+ \norm{\{\p_t b+T_{l}^{\ast-1}\p_tT_{l}^\ast f^{l+1}\}_i|_{\gamma_-^i}}_{L^\infty(\mathbb{R}_+)}
\\&\qquad+\sup_{0\le t<\infty}\norm{\{\lbrack\partial_{t}+\lambda_{li}^{\ast}\partial_{x}\rbrack T_{l}^{\ast-1}T_{l}^{\ast}g^{l+1}- T_{l}^{\ast-1} \partial_{t}A_{l}^{\ast}\partial_{x}V^{l+1}\}_i(t)}_{L^\infty(I)}
\\   &\quad\lesssim\norm{V_{0}}_{W^{1,\infty}(I)} +
\norm{\partial_{t}b}_{L^\infty(\mathbb{R}_+)} +\delta \norm{f^{l+1}|_{\gamma}}_{L^\infty(\mathbb{R}_+)}\\   &\qquad+\delta \sup_{0\le t<\infty} \norm{g^{l+1}(t)}_{L^\infty(I)}+\delta \sup_{0\le t<\infty} \norm{\partial_{x}V^{l+1}(t)}_{L^\infty(I)}  \\
& \quad  \lesssim \norm{V_{0}}_{W^{1,\infty}(I)}+\norm{ b}_{W^{1,\infty}(\mathbb{R}_+)}  +\delta \sup_{0\le t<\infty} \norm{g^{l+1}(t)}_{L^\infty(I)}.
\end{align*}
Then for \thinspace$\delta\ll1$,
\[
\sup_{0\le t<\infty}\norm{\p_tV^{l+1}(t)}_{L^\infty(I)}+ \norm{\p_tV^{l+1}|_{\gamma}}_{L^\infty(\mathbb{R}_+)} \lesssim \norm{V_{0}}_{W^{1,\infty}(I)}+\norm{ b}_{W^{1,\infty}(\mathbb{R}_+)}   .
\]
From (\ref{V_x}), we deduce from (\ref{claim})
\beq
\sup_{0\le t<\infty}\sum_{j=0}^1\norm{\p_t^j V^{l+1}(t)}_{W^{1-j,\infty}(I)}+ \norm{V^{l+1}|_{\gamma}}_{W^{1,\infty}(\mathbb{R}_+)}
\lesssim \norm{V_{0}}_{W^{1,\infty}(I)}+\norm{ b}_{W^{1,\infty}(\mathbb{R}_+)}   \le\delta  \label{nestimate}%
\eeq
by choosing $\eps_0 $ in \eqref{small}
sufficiently small. We therefore establish the uniform bound (\ref{inductionl}) by
induction on $l.$

\textit{Step 2. Take limit }$l\rightarrow\infty.$ We take difference
$\{V^{l+1}-V^{l}\}$ from (\ref{nsystem}) such that
\begin{equation}
\p_t\{V^{l+1}-V^{l}\}+ A_{l}^{\ast}\p_x\{V^{l+1}-V^{l}\}=[A_{l}^{\ast
}-A_{l-1}^{\ast}]\p_xV^{l}.
\end{equation}
We define
\beq
d^{l+1}:= T_{l}^{\ast-1}[V^{l+1}-V^{l}]=f^{l+1}-f^{l}-[T_{l}^{\ast-1}-T_{l-1}^{\ast-1}]V^{l},
\eeq
then
\beq\label{d}
\begin{cases}
\lbrack\partial_{t}+\lambda_{li}^{\ast}\partial_{x}]d_{i}^{l+1}    =\{[\partial
_{t}+\lambda_{i}^{l}\partial_{x}]T_{l}^{\ast-1}T_{l}^{\ast}d^{l+1}+T_{l}^{\ast-1}%
[A_{l}^{\ast}-A_{l-1}^{\ast}]\p_xV^{l}\}_{i},\\
d_{i}^{l+1}|_{t=0}=-\{[T_{l}^{\ast-1}-T_{l-1}^{\ast-1}]{T_{l-1}^\ast}|_{t=0}T^{-1}_0V_{0}\}_i,\quad d_{i}^{l+1}|_{\gamma_{-}^{i}}=-\{[T_{l}^{\ast-1}-T_{l-1}^{\ast-1}]V^{l}\}_i.
\end{cases}
\eeq
We then apply the $L^{\infty}$ estimate \eqref{infinity} to deduce
\begin{align*}
& \sup_{0\le t<\infty}\norm{d_{i}^{l+1}(t)}_{L^\infty(I)}+\norm{d_{i}^{l+1}|_{\gamma}}_{L^\infty(\mathbb{R}_+)}
\\&\quad\ls \norm{\{[T_{l}^{\ast-1}-T_{l-1}^{\ast-1}]{T_{l-1}^\ast}|_{t=0}T^{-1}_0V_{0}\}_i}_{L^\infty(I)}+ \sup_{0\le t<\infty} \norm{\{[T_{l}^{\ast-1}-T_{l-1}^{\ast-1}]V^{l}\}_i|_{\gamma_-^i}}_{L^\infty(\mathbb{R}_+)}
\\&\qquad+\sup_{0\le t<\infty}\norm{\{[\partial
_{t}+\lambda_{i}^{l}\partial_{x}]T_{l}^{\ast-1}T_{l}^{\ast}d^{l+1}+T_{l}^{\ast-1}%
[A_{l}^{\ast}-A_{l-1}^{\ast}]\p_xV^{l}\}_{i}(t)}_{L^\infty(I)}
\\&\quad\lesssim\delta \norm{d_0^{l}}_{L^\infty(I)}+\delta  \norm{d^{l}|_{\gamma}}_{L^\infty(\mathbb{R}_+)}+\delta  \sup_{0\le t<\infty} \norm{d^{l+1}(t)}_{L^\infty(I)}+\delta \sup_{0\le t<\infty} \norm{d^{l}(t)}_{L^\infty(I)}
\end{align*}
so that for $\delta
\ll1$,
\begin{align}
\sup_{0\le t<\infty}\norm{d^{l+1}(t)}_{L^\infty(I)}+\norm{d^{l+1}|_{\gamma}}_{L^\infty(\mathbb{R}_+)}&\lesssim \delta\left\{\sup_{0\le t<\infty}\norm{d^{l}(t)}_{L^\infty(I)}+ \norm{d^{l}|_{\gamma}}_{L^\infty(\mathbb{R}_+)}\right\}
\nonumber\\&<\frac{1}{2}\left\{\sup_{0\le t<\infty}\norm{d^{l}(t)}_{L^\infty(I)}+ \norm{d^{l}|_{\gamma}}_{L^\infty(\mathbb{R}_+)}\right\}.
\end{align}
We then deduce $V^{l}$ is Cauchy in $L^{\infty}$.

We take $l\rightarrow\infty$ in \eqref{nsystem} and $V^{l}\rightarrow V$ so that $V$ solves
(\ref{system}) with the desired estimate (\ref{vdecay}) from (\ref{nestimate})
as $l\rightarrow\infty.$ This concludes the existence part of Theorem \ref{nbyn}.

\textit{Step 3. Uniqueness.} We assume that there are two solutions
$V_{1}$ and $V_{2}$ to \eqref{system} with the same data and the estimate
\eqref{vdecay}. Taking difference yields%
\beq
\partial_{t}\{V_{1}-V_{2}\}+A(V_{1})\partial_{x}\{V_{1}-V_{2}\}+[A(V_{1}%
)-A(V_{2})]\partial_{x}V_{2}=0.
\eeq
We define
\beq
d:=T_{1}^{-1}[V_{1}-V_{2}]=f_{1}-f_{2}+[T_{1}^{-1}-T_{2}^{-1}]V_{2},%
\eeq
then
\beq
\begin{cases}
[\partial_{t}+\Lambda(V_{1})\partial_{x}]d    =[\partial_{t}+\Lambda(V_{1})\partial_{x}]T_{1}^{-1}T_{1}d-[A(V_{1})-A(V_{2})]\partial
_{x}V_{2},\\
d|_{t=0}   =0,\quad d_{i}|_{\gamma_{-}^{i}}=[T_{1}^{-1}%
-T_{2}^{-1}]V_{2}.
\end{cases}
\eeq
From the $L^{\infty}$ estimate \eqref{infinity}, \eqref{vdecay} and the smallness of \eqref{small}, we conclude the
uniqueness:
\[\sup_{0\le t<\infty}\norm{d(t)}_{L^\infty(I)}+\norm{d|_{\gamma}}_{L^\infty(\mathbb{R}_+)}\lesssim \eps_0 \left\{\sup_{0\le t<\infty}\norm{d(t)}_{L^\infty(I)}+\norm{d|_{\gamma}}_{L^\infty(\mathbb{R}_+)}\right\}\implies d\equiv0.\]
The proof of Theorem \ref{nbyn} is thus completed.
\end{proof}

\section{3D Incompressible Euler System}

\subsection{Linear Transport Equation}
Assume $u\in W^{1,\infty}(\mathbb{R}_+\times \Omega)$ with $
u_1 >0$, $\dive u=0$ in $\Omega$ and $u\cdot \nu=0$ on $\gamma_0$.
We consider the linear transport inflow problem:
\begin{align}
 & \partial_t f+u\cdot\nabla f     =h, \label{transport1}\\
 & f|_{t=0}   =f_{0} ,\quad f|_{\gamma_-}=b.\label{initial1}
\end{align}

Fix $t\geq0$ and $x\in \bar \Omega,$ we define the characteristic
curve  $X(\tau;t,x)$ passing through $(t,x)$:
\begin{equation}
\frac{d}{d\tau}X(\tau;t,x)=u(\tau,X(\tau;t,x)),\quad
X(t;t,x)=x. \label{char2}
\end{equation}
\begin{lemma}\label{inlem}
Assume $u\cdot \nu=0 \text{ on }\gamma_0$, then
\[
\{X(\tau;t,x),\tau \ge 0\} \subseteq \gamma_0, \text{ for }t\ge0,\ x\in \gamma_0.
\]
\end{lemma}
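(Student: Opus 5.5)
The plan is to show that the lateral surface $\gamma_0=[-1,1]\times\partial D$ is invariant for the characteristic flow \eqref{char2}. We construct a solution of the ODE that stays on $\gamma_0$, and then invoke uniqueness for Lipschitz ODEs. Since $u\in W^{1,\infty}$, the field is Lipschitz in $x$ uniformly in $\tau$, so \eqref{char2} has a unique solution on $\bar\Omega$ for as long as it remains in $\bar\Omega$. The tangency condition $u\cdot\nu=0$ on $\gamma_0$ says that $u$ is tangent to $\gamma_0$ wherever $\nu$ is defined.

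\textbf{Circular pipe $D=B(0,1)$.} Here $\gamma_0$ is a smooth manifold. We restrict $u$ to $\gamma_0$ to obtain a Lipschitz tangent vector field $u|_{\gamma_0}$. Next we extend this field to a Lipschitz field on the cylinder $\mathbb{R}\times\partial D$; in the coordinates $x_1$, $\theta$ this is a Lipschitz ODE. Solving the ODE intrinsically on $\gamma_0$ gives a curve $Y(\tau)\subset\gamma_0$ with $\dot Y=u(\tau,Y)$ and $Y(t)=x$. By uniqueness in $\mathbb{R}^3$ (after a Lipschitz extension of $u$ to a neighborhood of $\bar\Omega$), we get $X(\tau;t,x)=Y(\tau)\in\gamma_0$.

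An alternative, more quantitative route is to set $\phi(\tau)=1-|X'(\tau)|^2$, where $X'=(X_2,X_3)$. Then
\[
\dot\phi=-2X'\cdot u'(\tau,X)=-2X'\cdot\big(u'(\tau,X)-u'(\tau,X'/|X'|)\big),
\]
using $u'\cdot\nu=0$ at the projected point. By the Lipschitz bound, $|\dot\phi|\lesssim|\phi|$, and since $\phi(t)=0$, Gronwall gives $\phi\equiv0$.

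\textbf{Square pipe $D=(-1,1)^2$.} Here $\gamma_0$ consists of the faces $\{x_i=\pm1\}$, $i=2,3$. On the face $x_2=1$, the condition reads $u_2=0$. For the distance function $\phi(\tau)=1-X_2(\tau)$ we have $|\dot\phi|=|u_2(\tau,X)-u_2(\tau,X)|_{X_2=1}|\lesssim\phi$, using that $u_2$ is Lipschitz and vanishes on the whole plane piece $\{x_2=1\}\cap\bar\Omega$. Gronwall then gives $X_2\equiv1$ for as long as the curve stays in $\bar\Omega$. At edges such as $x_2=x_3=1$, both $u_2$ and $u_3$ vanish, so the same argument applies to each coordinate separately. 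Thus each face, and hence $\gamma_0$, is invariant.

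\textbf{Time range and the main obstacle.} The statement concerns all $\tau\ge0$ (as long as the curve is defined inside $\bar\Omega$). The curve may exit through $\gamma_\pm$, since $u_1>0$. Before it exits, however, it lies in $\gamma_0$, so the statement is interpreted for $\tau$ in the maximal interval of existence inside $\bar\Omega$. This is the main subtlety. The invariance argument itself is routine Gronwall; the delicate points are to justify the Lipschitz extension of $u$ needed to use ODE uniqueness up to the boundary, and to handle the edges of the square pipe and the ends $\gamma_\pm\cap\gamma_0$. I would handle these by working with the one-sided distance functions above, which need only $u$ on $\bar\Omega$ and avoid any extension.
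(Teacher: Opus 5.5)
Your proposal is correct. For the circular pipe, your first route is the paper's argument: solve the ODE on $\gamma_0$ with the restricted tangent field, then invoke uniqueness. The paper defines the sub-characteristic $\tilde X$ driven by $u_\parallel=u-(u\cdot\nu)\nu$, observes that $u_\parallel=u$ on $\gamma_0$, and concludes $X=\tilde X\subseteq\gamma_0$ by ODE uniqueness. It uses this one argument for both geometries.

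Your Gronwall arguments on $1-|X'|^2$ and $1-X_2$ use a different mechanism. Instead of building a solution on $\gamma_0$ and comparing, you show directly that the distance to the lateral surface satisfies $|\dot\phi|\lesssim\phi$ with $\phi(t)=0$. This buys you three things:
\begin{itemize}
\item It only needs the Lipschitz bound of $u$ on the convex set $\bar\Omega$.
\item It needs no extension of $u$ and no solvability of an ODE on a surface.
\item It handles the edges of the square pipe with no extra discussion. There $\nu$, and hence $u_\parallel$, is undefined, and the paper's construction tacitly glosses over this.
\end{itemize}
The paper's version is shorter and reads uniformly across geometries, but it quietly relies on exactly these points.

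Your reading of the time range is also the right one: the inclusion holds only while the characteristic stays in $\bar\Omega$, before it exits through $\gamma_\pm$. This matches how the lemma is actually used in Part (3) of Theorem \ref{scalar2}, namely for $\tau\in[t_b^-(t,x),t]$.

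One notational fix in the circular case: the projected point is $(X_1,X'/|X'|)$, not $X'/|X'|$. You should also note that $|X'|$ stays bounded away from $0$ near $\gamma_0$, so the projection is well defined along the argument.
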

\begin{proof}
For $t\ge 0$ and $x\in\gamma_0$, we define the sub-characteristic curve  $\tilde X(\tau;t,x)$ ($\subseteq \gamma_0$) passing through $(t,x)$: denoting $u_\parallel=u-u\cdot \nu \nu,$
\[
\frac{d}{d\tau}\tilde X(\tau;t,x)=u_\parallel(\tau,\tilde X(\tau;t,x)),\quad
\tilde X(t;t,x)=x.
\]
Since $u\cdot \nu=0$ on $ \gamma_0$, it follows that
\[
\frac{d}{d\tau}\tilde X(\tau;t,x)=u(\tau,\tilde X(\tau;t,x)),\quad
\tilde X(t;t,x)=x.
\]
By the uniqueness of ODE \eqref{char2}, $X(\tau;t,x)=\tilde X(\tau;t,x) \subseteq \gamma_0$.
\end{proof}

For $t\ge0$ and $ x\in \bar\Omega$,   we define the backward exit-time for $(t,x)$ as
\[
t_{b}^{-}(t,x)=\inf_{s\leq t}\{ X(s;t,x)\in \bar\Omega\}.
\]
Since $u_1\ge c_1>0$, it follows as 1D case that $t_{b}^{-}(t,x)$ is well-defined for $t\ge0$, $x\in \bar\Omega$. Note that  $0\leq t_{b}^{-}(t,x)\leq t,$   and  $t_{b}^{-}(t,x)=t$ if and only if   $(t,x)\in \{t\ge 0, x\in \bar{\gamma}_-\}\cup\{t=0,x\in \bar\Omega \}$.
We also define the
backward exit point $x_{b}^{-}(t,x)=X(t_{b}^{-}(t,x);t,x)$ such that  $(t_{b}^{-},x_{b}^{-})\in \{t\ge 0, x\in \bar{\gamma}_-\}\cup\{t=0,x\in \bar\Omega \}$.
Moreover, we can split $\{t\ge0, x\in \bar\Omega\}$ into disjoint sets:
\begin{align}
Q_+ &:=\{(t,x)     :t_{b}^{-}(t,x)=0,\  x_{b}^{-}
(t,x)\in \bar\Omega\backslash \bar{\gamma}_-\},\nonumber
\\Q_- &:=\{(t,x)   :t_{b}^{-}(t,x)>0,\  x_{b}^{-}%
(t,x) \in  \bar{\gamma}_-\}
,\label{split1}\\
\Gamma &:=\{(t,x)    :t_{b}^{-}(t,x)=0,\  x_{b}^{-}%
(t,x) \in \bar{\gamma}_-\} \equiv\{(\tau,X(\tau;0,x))\mid x\in \bar{\gamma}_-,\tau\ge 0\},\nonumber
\end{align}
and it holds that $t_{b}^{-},x_{b}^{-} \in
W^{1,\infty}$, uniformly up to the $W^{1,\infty}$ hypersurface $\Gamma$.

For $t\ge 0$, $x\in \bar \Omega$, we define
\begin{align}\label{expression1}
f(t,x)  &  =
\begin{cases}
\dis f_{0}(X(0;t,x))+\int_{0}^{t}h(\tau;X(\tau;t,x))d\tau,&\text{if }
(t,x)\in Q_+,\\
\dis  b(t_{b}^{-}(t,x), x_{b}^{-}(t,x) )+\int_{t_{b}^{-}(t,x)}^{t}h(\tau;X(\tau;t,x))d\tau,&\text{if }
(t,x)\in Q_-.
\end{cases}
\end{align}
Then from \eqref{split1}, $f(t,x)$ is well-defined mild solution to \eqref{transport1} and \eqref{initial1}
except along $\Gamma$, while the trace $f|_{\gamma_{+}}$ is well-defined except the intersection point of $\Gamma$ with $[0,\infty)\times\gamma_+$.

\begin{theorem}\label{scalar2}\bigskip
Let $3<p<\infty$. Assume $0<c_1\le u_1\le c_2<\infty$. For fixed $\alpha\ge 1$, define the spatial weight function:
\begin{equation}\label{weight2}
w=w(x_1):=e^{-\alpha  x_1}.
\end{equation}
(1) Assume $f_0\in L^p(\Omega),\ b\in L^\infty(0,\infty; L^p(\gamma_-)),\ h\in L^\infty(0,\infty; L^p(\Omega)).$
Then there exists a unique solution $f$ to \eqref{transport1} and \eqref{initial1} such that
\begin{align}\label{esf}
  \sup_{0\le t<\infty}\norm{ w  f(t)}_{L^p(\Omega)}^p     \ls   \norm{ w  f_0}_{L^p(\Omega)}^p+ \frac{c_2}{  c_1}    \norm{ w  b }_{L^\infty(0,\infty;L^p(\gamma_-))}^p + (c_1\alpha)^{-p}  \norm{ w  h }_{L^\infty(0,\infty;L^p(\Omega))}^p .
\end{align}
(2) Assume $f_0\in W^{1,p}(\Omega), \p_t^j b \in L^\infty(0,\infty; W^{1-j,p}(\gamma_-))  ,  \p_t^j h \in L^\infty(0,\infty; W^{1-j,p}(\Omega)), j=0,1,$ and the compatibility condition $
f_{0}|_{\gamma_{-}}=b(0)$. If
\beq\label{alcon}
 \norm{   \bar \p u  }_{L^\infty((0,\infty)\times\Omega)}^p  \left(1+ c_1^{-p}+ c_1^{-p}\norm{(u_2 ,u_3)  }_{L^\infty((0,\infty)\times\Omega)}^p\right)\le \delta (c_1\alpha)^{p},
\eeq
where $\bar \p =(\p_t,\p_2,\p_3)$, then
\begin{align}\label{esf2}
 &\sup_{0\le t<\infty}\norm{ w \bar \p f  (t)}_{L^p(\Omega)}^p
  \ls   \norm{ w \bar \p  f(0)}_{L^p(\Omega)}^p+ \frac{c_2}{  c_1}     \norm{ w \bar \p  b }_{L^\infty(0,\infty;L^p(\gamma_-))}^p
\nonumber\\&\qquad + (c_1\alpha)^{-p}c_1^{-p}  \norm{   \bar \p u  }_{L^\infty((0,\infty)\times\Omega)}^p   \norm{
w h}_{L^\infty(0,\infty;L^p(\Omega))}^p+ (c_1\alpha)^{-p}  \norm{ w \bar \p  h }_{L^\infty(0,\infty;L^p(\Omega))}^p .
\end{align}
(3) If further  $f_0=0$ on $\gamma_0$, $b=0$ on $\gamma_-\cap\gamma_0$ and $h=0$ on $\gamma_0$, then $f=0$ on $\gamma_0$.
\end{theorem}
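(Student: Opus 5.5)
Existence and uniqueness come from the mild solution \eqref{expression1}, exactly as in the 1D case (Theorem \ref{scalar}). The bounds will be weighted $L^p$ energy identities. They replace the 1D maximum principle and use $\dive u=0$ together with $u\cdot\nu=0$ on $\gamma_0$. The key point is that $w=e^{-\alpha x_1}$ satisfies $u\cdot\nabla w^p=-p\alpha u_1 w^p\le -p\alpha c_1 w^p$, which produces interior damping.

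\textbf{Part (1).} I multiply \eqref{transport1} by $w^p|f|^{p-2}f$ (regularized as in the 1D proof) and integrate over $\Omega$. Since $\dive u=0$,
\[
\int_\Omega w^p u\cdot\nabla\tfrac{|f|^p}{p}=-\tfrac1p\int_\Omega u\cdot\nabla(w^p)|f|^p+\tfrac1p\int_{\p\Omega}u\cdot n\,w^p|f|^p .
\]
The lateral boundary term vanishes because $u\cdot\nu=0$. On $\gamma_+$ the term $u_1 w^p|f|^p\ge0$ can be dropped. The inflow term on $\gamma_-$ is bounded by $c_2 w^p|b|^p$. Hence
\[
\frac{d}{dt}\|wf\|_p^p+p\alpha c_1\|wf\|_p^p\le c_2\|wb\|^p_{L^p(\gamma_-)}+p\|wh\|_p\|wf\|_p^{p-1}.
\]
Young's inequality with $\epsilon\sim c_1\alpha$ absorbs the last term and leaves $(c_1\alpha)^{1-p}\|wh\|^p$. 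Gronwall with the decay rate $c_1\alpha$ then gives $\sup_t\|wf\|_p^p\lesssim \|wf_0\|^p+\frac{c_2}{c_1\alpha}\|wb\|^p+(c_1\alpha)^{-p}\|wh\|^p$, which implies \eqref{esf} since $\alpha\ge1$. Uniqueness follows by applying the same estimate to the difference of two solutions. This requires enough regularity of $f$ to justify the computation, which I obtain by approximating with smooth data.

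\textbf{Part (2).} I apply $\bar\p\in\{\p_t,\p_2,\p_3\}$ to \eqref{transport1}. These are the tangential directions to $\gamma_\pm$, so the boundary data $\bar\p f|_{\gamma_-}=\bar\p b$ make sense. This uses compatibility for the time derivative at $t=0$, and in the case of the disk also the lateral structure. The result is
\[
(\p_t+u\cdot\nabla)\bar\p f=\bar\p h-\bar\p u\cdot\nabla f .
\]
The commutator contains $\p_1 f$, which is not tangential. I eliminate it using the equation, as in \eqref{fx}:
\[
\p_1 f=u_1^{-1}\bigl(h-\p_t f-u_2\p_2 f-u_3\p_3 f\bigr),
\]
so that
\[
|\nabla f|\lesssim |\bar\p f|\bigl(1+c_1^{-1}+c_1^{-1}|(u_2,u_3)|\bigr)+c_1^{-1}|h| .
\]
Applying the Part (1) estimate to $\bar\p f$ with this forcing, the term $(c_1\alpha)^{-p}\|\bar\p u\|_\infty^p(1+c_1^{-p}+c_1^{-p}\|(u_2,u_3)\|^p)\|w\bar\p f\|^p$ is absorbed by \eqref{alcon} once $\delta$ is small. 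The remaining terms give \eqref{esf2}. To make this rigorous, I would do the estimate on smooth approximations and pass to the limit, using the $W^{1,\infty}$ regularity of $t_b^-$ and $x_b^-$ up to $\Gamma$ and the compatibility condition to prevent a jump across $\Gamma$, as in Theorem \ref{scalar}(2).

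\textbf{Part (3).} Take $x\in\gamma_0$. By Lemma \ref{inlem}, the backward characteristic through $(t,x)$ stays in $\gamma_0$. It therefore exits either at $t=0$ at a point of $\gamma_0$, where $f_0=0$, or on $\gamma_-\cap\gamma_0$, where $b=0$. The integrand $h$ vanishes along the whole path. Then \eqref{expression1} gives $f(t,x)=0$; traces are well-defined since $f\in W^{1,p}$ with $p>3$ is continuous.

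The main obstacle I expect is Part (2): justifying the differentiated energy identity and the correct boundary trace of $\bar\p f$ on $\gamma_-$ at the corner $\gamma_-\cap\gamma_0$, where tangential derivatives must stay consistent with the lateral geometry.
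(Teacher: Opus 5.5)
Your proposal is correct in outline and matches the paper in substance. Both use the same weighted $L^p$ energy estimate, with interior damping coming from $u\cdot\nabla w^p=-p\alpha u_1w^p$; the paper multiplies by $e^{\alpha c_1 t}$ rather than invoking Gr\"onwall, which is equivalent. Both eliminate $\p_1 f$ through the equation and absorb the commutator under \eqref{alcon}. Both use mollified compatible data so that there is no jump across $\Gamma$. Both prove Part (3) by the invariance of $\gamma_0$ (Lemma \ref{inlem}) combined with the mild formula and continuity.

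The one genuine difference is how you handle the characteristic lateral boundary $\gamma_0$. The paper explicitly declines to integrate by parts using $u\cdot\nu=0$. It replaces $u$ by $u^\eps=u+\eps(0,x_2,x_3)$, so that $u^\eps\cdot\nu=\eps>0$ and $\gamma_0$ becomes an outflow boundary. It then applies the standard transport Green identity for $L^p$ mild solutions: the lateral flux $\eps\int_{\gamma_0}w^p|f^\eps|^p\ge0$ is dropped, the error from $\dive u^\eps=2\eps$ is absorbed, and finally $\eps\to0$.

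You keep the exact $u$ and integrate by parts directly, so the lateral flux vanishes identically. This is legitimate, but only at the level of Lipschitz solutions, so you need the following.
\begin{itemize}
\item Choose smooth approximating data that are compatible, for instance vanishing near $\{t=0\}\times\gamma_-$.
\item Use that $\bar\Omega$ is laterally invariant under the flow, and that $t_b^-$ is Lipschitz by the implicit function theorem because $u_1\ge c_1$. Then the mild solution is Lipschitz up to $\gamma_0$.
\item The divergence theorem on the Lipschitz domain $\Omega$ then gives the identity, and you conclude by density, using linearity of the estimate.
\end{itemize}
Your route avoids the $\eps$-error terms and the second limit. The paper's route stays within the non-characteristic trace theory for $L^p$ solutions and never needs regularity of $f$ up to the grazing boundary. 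For Part (3), note that joint continuity in $(t,x)$ also uses the bound on $\p_t f$ (via Aubin--Lions, as in the paper), not just $f(t)\in W^{1,p}$ with $p>3$.

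One step does not work as written: uniqueness. Approximating the data controls only the particular solution you construct. It gives no regularity for an arbitrary solution in the weak class, so the energy estimate cannot be applied to the difference of two such solutions on that basis. You need either the duality argument of the 1D case, which is what the paper invokes, or a renormalization/commutator argument. The latter is available since $u\in W^{1,\infty}$, but it requires care near the boundary.
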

\begin{proof}
Owing to $u\cdot \nu=0$ on $\gamma_0$, the Green identity in the transport theory \cite{GKTT} is not directly valid for \eqref{transport1}. We shall utilize a non-characteristic approximation; for sufficiently small $\varepsilon>0$, we consider the approximating transport inflow problem:
\begin{align}
 & \partial_t f^\eps+ u^\varepsilon \cdot\nabla f^\eps     =h,\quad u^\varepsilon:=u+\varepsilon (0,x_2,x_3), \label{transport1ep}\\
 &f^\eps|_{t=0}   =f_0,\quad f^\eps|_{\gamma_-}=b.\label{initial1ep}
\end{align}
Here, we note that $
u^\varepsilon\cdot \nu=\varepsilon>0$ on $\gamma_0$; indeed, for the circular pipe $\nu=(0,x_2,x_3)$ on $\gamma_0$ so that $\varepsilon (0,x_2,x_3)\cdot \nu\equiv \eps |\nu|^2=\eps$, while for the square pipe, for instance, $\nu=\pm e_2$ on $\{x_2=\pm 1\}\cap\gamma_0$ so that $\varepsilon (0,x_2,x_3)\cdot \nu\equiv\pm \eps x_2=\eps$.

Fix $t\geq0$ and $x\in \bar \Omega,$ we define the characteristic
curve  $X^\varepsilon(\tau;t,x)$ passing through $(t,x)$:
\begin{equation}
\frac{d}{d\tau}X^\varepsilon(\tau;t,x)=u^\varepsilon(\tau,X(\tau;t,x)),\quad
X^\varepsilon(t;t,x)=x. \label{char2ep}
\end{equation}
We define the backward exit-time for $(t,x)$ as
\[
t_{b}^{\varepsilon,-}(t,x)=\inf_{s\leq t}\{ X^\varepsilon(s;t,x)\in \bar\Omega\}
\]
and the backward exit point $x_{b}^{\varepsilon,-}(t,x)=X(t_{b}^{\varepsilon,-};t,x)$ such that  $(t_{b}^{\varepsilon,-},x_{b}^{\varepsilon,-})\in\{t\ge0,x\in\gamma_{-}\}$ $\cup\{t=0,x\in \bar\Omega\} $.
Moreover, we can split $\{t\ge0, x\in \bar\Omega\}$ into disjoint sets:
\begin{align}
Q_+^\varepsilon &:=\{(t,x)     :t_{b}^{\varepsilon,-}(t,x)=0,\  x_{b}^{\varepsilon,-}
(t,x)\in \bar\Omega\backslash \gamma_{-}\},\nonumber
\\Q_-^\varepsilon &:=\{(t,x)   :t_{b}^{\varepsilon,-}(t,x)>0,\ x_{b}^{\varepsilon,-}%
(t,x) \in \gamma_{-}\},
\label{split2e}\\
\Gamma^\varepsilon &:=\{(t,x)    :t_{b}^{\varepsilon,-}(t,x)=0,\  x_{b}^{\varepsilon,-}%
(t,x) \in \gamma_{-}\} \equiv\{(\tau,X^\varepsilon(\tau;0,x))\mid x\in \gamma_{-},\tau\ge 0\},\nonumber
\end{align}
and $t_{b}^{\varepsilon,-},x_{b}^{\varepsilon,-} \in
W^{1,\infty}$, uniformly up to the $W^{1,\infty}$ hypersurface $\Gamma^\varepsilon$.

For $t\ge 0$, $x\in \bar \Omega$, we define
\begin{align}\label{expression2}
f^\eps(t,x)  &  =
\begin{cases}
\dis f_{0}(X^\eps(0;t,x))+\int_{0}^{t}h(\tau;X^\eps(\tau;t,x))d\tau,&\text{if }
(t,x)\in Q_+^\eps,\\
\dis  b\left(t_{b}^{\eps,-}(t,x), x_{b}^{\eps,-}(t,x)\right)+\int_{t_{b}^{\eps,-}(t,x)}^{t}h(\tau;X^\eps(\tau;t,x))d\tau,&\text{if }
(t,x)\in Q_-^\eps.
\end{cases}
\end{align}
From \eqref{split2e}, it follows that $f^\eps(t,x)$ is well-defined mild solution to \eqref{transport1ep} and \eqref{initial1ep}
except along  $\Gamma^\eps$, while the
trace $f^\eps|_{\gamma_+\cup\gamma_0}$ is well-defined except the intersection point of $\Gamma^\eps$ with $[0,\infty)\times\{\gamma_+\cup\gamma_0\}$.

$\bullet$ {\it Proof of Part (1).} Recall the weight function $w(x_1)$ from \eqref{weight2} with $w'(x_1)= -\alpha w(x_1)$.
Then we have from \eqref{transport1ep}, away from $\Gamma^\eps$, for $3<p<\infty,$
\begin{align}\label{fpeq}
&\frac{1}{p} \partial_t \left\{e^{\alpha c_1 t}w^p|f^\eps|^p\right\}+\frac{1}{p}  u\cdot\nabla \left\{e^{\alpha c_1 t}w^p|f^\eps|^p \right\}+ \frac{1}{p}  (p u_1-c_1) \alpha \left\{e^{\alpha c_1 t}w^p|f^\eps|^p\right\}
\nonumber\\&\quad= e^{\alpha c_1 t}w^p |f^\eps|^{p-2}f^\eps h.
\end{align}
By the Green identity \cite{GKTT}, recalling $u^\varepsilon\cdot \nu=\eps$ on $\gamma_0$ and ${\rm div}u^\eps=2\eps$ in $\Omega$,
\begin{align}
&\frac{1}{p}\int_{\Omega}   e^{\alpha c_1 t} w^p |f^\eps(t)|^{p }    +\frac{1}{p}    \int_0^t \int_{\Omega} \left[ (p u_1-c_1) \alpha-2\eps\right] e^{\alpha c_1 \tau}  w^{p}|f^\eps|^p \nonumber
 \\&\quad+ \frac{1}{p}  \int_0^t \int_{\gamma_+}   u_1   e^{\alpha c_1 \tau}w^p|f^\eps|^p+ \frac{1}{p}  \int_0^t \int_{\gamma_0}   \varepsilon   e^{\alpha c_1 \tau}w^p|f^\eps|^p
\nonumber\\&\quad = \frac{1}{p}\int_{\Omega}   w^p  |f^\eps(0)|^{p }  +\frac{1}{p}    \int_0^t \int_{\gamma_-}   u_1  e^{\alpha c_1 \tau}  w^{p}|f^\eps|^p +\int_0^t \int_{\Omega} e^{\alpha c_1 \tau}w^p |f^\eps|^{p-2}f h
\nonumber\\&\quad = \frac{1}{p}\int_{\Omega}   w^p  |f_0|^{p }+ \frac{1}{p}  \int_0^t \int_{\gamma_-}  u_1  e^{\alpha c_1 \tau}w^p|b|^p +\int_0^t \int_{\Omega}e^{\alpha c_1 \tau}w^p |f^\eps|^{p-2}f^\eps h .\nonumber
\end{align}
By Young's inequality and taking $\eps$ sufficiently small, we have
\begin{align}
& e^{\alpha c_1 t}\norm{ w  f^\eps(t)}_{L^p(\Omega)}^p     +  c_1 \alpha \int_0^t e^{\alpha c_1 \tau}\norm{ w  f^\eps(\tau)}_{L^p(\Omega)}^p
\nonumber\\&\quad +   c_1\int_0^t e^{\alpha c_1 \tau}\norm{ w  f^\eps(\tau)}_{L^p(\gamma_+)}^p+\varepsilon\int_0^t      e^{\alpha c_1 \tau}\norm{ w  f^\eps(\tau)}_{L^p(\gamma_0)}^p
\nonumber\\&\quad \ls   \norm{ w  f_0}_{L^p(\Omega)}^p+ c_2  \int_0^t      e^{\alpha c_1 \tau}\norm{ w  b(\tau)}_{L^p(\gamma_-)}^p + (c_1\alpha)^{-(p-1)}\int_0^t  e^{\alpha c_1 \tau}\norm{ w  h(\tau)}_{L^p(\Omega)}^p.\nonumber
\end{align}
Dividing $e^{\alpha c_1 t}$, we obtain
\begin{align}\label{esfe}
& \norm{ w  f^\eps(t)}_{L^p(\Omega)}^p     +    c_1 \alpha\int_0^t e^{-\alpha c_1 (t-\tau)}\norm{ w  f^\eps(\tau)}_{L^p(\Omega)}^p
  \nonumber\\&\quad+   c_1\int_0^t e^{-\alpha c_1 (t-\tau)}\norm{ w  f^\eps(\tau)}_{L^p(\gamma_+)}^p+\varepsilon\int_0^t      e^{-\alpha c_1 (t-\tau)}\norm{ w  f^\eps(\tau)}_{L^p(\gamma_0)}^p
\nonumber\\&\quad \ls
 \norm{ w  f_0}_{L^p(\Omega)}^p+c_2  \int_0^t      e^{-\alpha c_1 (t-\tau)}\norm{ w  b(\tau)}_{L^p(\gamma_-)}^p + (c_1\alpha)^{-(p-1)}\int_0^t  e^{-\alpha c_1 (t-\tau)}\norm{ w  h(\tau)}_{L^p(\Omega)}^p
\nonumber\\&\quad \ls   \norm{ w  f_0}_{L^p(\Omega)}^p+  \frac{c_2}{\alpha c_1}    \norm{ w  b }_{L^\infty(0,\infty;L^p(\gamma_-))}^p + (c_1\alpha)^{-p}  \norm{ w  h }_{L^\infty(0,\infty;L^p(\Omega))}^p .
\end{align}
Based on the uniform estimates \eqref{esfe}, we can pass to the limit as $\varepsilon\rightarrow0$ in \eqref{transport1ep} and \eqref{initial1ep} to get a weak solution $f$  to \eqref{transport1} and \eqref{initial1} satisfying the estimate \eqref{esf}. By the uniqueness as in $1D$, $f$ is just the one given by \eqref{expression1}.

$\bullet$ {\it Proof of Part (2).} We utilize the non-characteristic approximation:
\begin{align}
 & \partial_t f^\eps+ u^\varepsilon \cdot\nabla f^\eps     =h^\eps:=\eta_\eps\ast \tilde h,\quad u^\varepsilon:=u+\varepsilon (0,x_2,x_3), \label{transport1ep2}\\
 &f^\eps|_{t=0}   =f^\eps_0:=\eta_\eps\ast \tilde f_{0} ,\quad f^\eps|_{\gamma_{-}}=b^\eps:= \eta_\eps\ast \tilde b-\eta_\eps\ast \tilde b(0)+f^\eps_0,\label{initial1ep2}
\end{align}
where $\tilde h$, $\tilde f_{0}$ and $\tilde b$ are the (spatial) Sobolev extensions of $h$, $f_0$, $b$ onto $\mathbb{R}^3$ or $\mathbb{R}^2$, respectively, and $\eta$ is the standard mollifier. Note that it follows from the compatibility condition $
f_{0}|_{\gamma_{-}}=b(0)$ that $-\eta_\eps\ast \tilde b(0)+f^\eps_0\rightarrow0$ on $\gamma_-$ as $\eps\rightarrow0$.

Recall that the solution $f^\eps$ to \eqref{transport1ep2} and \eqref{initial1ep2} is defined by \eqref{expression2} with $f_0,b,h$ replaced by $f_0^\eps, b^\eps,h^\eps$ accordingly. Then $f^\eps\in W^{1,\infty}([0,\infty)\times \bar\Omega)$ except  $\Gamma^\eps $ but up to $\Gamma^\eps$ from both sides $Q_\pm^\eps$. Now we show $f^\eps\in W^{1,\infty}([0,\infty)\times \bar\Omega)$  by proving the continuity along $\Gamma^\eps$ by noting the compatibility condition from our construction \eqref{initial1ep2}:
\begin{equation}\label{compat2}
b^\eps(0,x)=f_0^\eps(x),\quad x\in \gamma_{-}.
\end{equation}
Indeed, for $(t,x)\in  \Gamma^\eps $, that is,  $t_{b}^{\eps,-}(t,x)=0$ and $ x_{b}^{\eps,-}%
(t,x)=X^\eps(0;t,x)\in \gamma_{-} $, we define $f^\eps(t,x)$ via either the limit by approaching from $Q_+^\eps $:
\begin{align}
f^\eps(t,x)  &  =f_{0}^\eps(X^\eps(0;t,x))+\int_{0}^{t}h^\eps(\tau;X^\eps(\tau;t,x))d\tau
\nonumber
\\&=f_{0}^\eps(x_{b}^{\eps,-}%
(t,x))+\int_{0}^{t}h^\eps(\tau;X^\eps(\tau;t,x))d\tau\nonumber
\end{align}
or via the limit by approaching from $Q_-^\eps$:
\begin{align}
f^\eps(t,x)  &  =b(t_{b}^{\eps,-}(t,x), x_{b}^{\eps,-}(t,x) )+\int_{t_{b}^{\eps,-}(t,x)}^{t}h^\eps(\tau;X^\eps(\tau;t,x))d\tau\nonumber
\\&=b^\eps(0, x_{b}^{\eps,-}(t,x) )+\int_{0}^{t}h^\eps(\tau;X^\eps(\tau;t,x))d\tau.\nonumber
\end{align}
Note that thanks to \eqref{compat2}, $b^\eps(0, x_{b}^{\eps,-}(t,x) )=f_{0}^\eps(x_{b}^{\eps,-}%
(t,x))$ as $x_{b}^{\eps,-}(t,x)\in\gamma_{-}$, and hence these two are equal. So now $f^\eps$ is continuous in $t\ge0$, $x\in  \bar \Omega$ and hence $f^\eps\in W^{1,\infty}([0,\infty)\times \bar\Omega)$, and $f^\eps(t,x)$ is well-defined by \eqref{expression2} for  all $(t,x)\in[0,\infty)\times\bar\Omega$.

We next apply $\bar \p =\p_t,\p_2,\p_3$  to \eqref{transport1ep}, preserving the boundary conditions on $\gamma_{-}$, to obtain
\[\p_t \bar \p f^\eps +u^\eps \cdot\nabla \bar \p  f^\eps  =-\bar \p u^\eps\cdot\nabla   f^\eps+\bar \p  h^\eps .\]
Note that
\begin{align}
 &\bar \p f^\eps(t,x)=\bar \p b^\eps(t,x),\ (t,x)\in\gamma_-, \nonumber\\
 & \partial_t f^\eps(0,x)=-u^\eps(0,x)\cdot\nabla f_0^\eps(x)  +h^\eps(0,x),\ \p_{i} f^\eps(0,x)   =\p_{i} f_{0}^\eps(x)(i=2,3),\ x\in\Omega.\nonumber
\end{align}
We apply the estimate \eqref{esfe} to $\bar \p f^\eps$ to have
\begin{align}
& \norm{ w \bar \p f^\eps (t)}_{L^p(\Omega)}^p     + c_1 \alpha\int_0^t e^{-\alpha c_1 (t-\tau)}\norm{ w  \bar \p f^\eps(\tau)}_{L^p(\Omega)}^p
\nonumber\\&\quad +   c_1\int_0^t e^{-\alpha c_1 (t-\tau)}\norm{ w  \bar \p f^\eps(\tau)}_{L^p(\gamma_+)}^p +  \eps\int_0^t e^{-\alpha c_1 (t-\tau)}\norm{ w  \bar \p f^\eps(\tau)}_{L^p(\gamma_0)}^p
\nonumber\\&\quad \ls   \norm{ w \bar \p  f^\eps(0)}_{L^p(\Omega)}^p+ \frac{c_2}{\alpha c_1}    \norm{ w \bar \p  b^\eps }_{L^\infty(0,\infty;L^p(\gamma_-))}^p
\nonumber\\&\qquad + (c_1\alpha)^{-p}  \norm{   \bar \p u^\eps }_{L^\infty((0,\infty)\times\Omega)}^p   \norm{ w  \nabla   f^\eps }_{L^p(\Omega)}^p + (c_1\alpha)^{-p}   \norm{ w \bar \p  h^\eps }_{L^\infty(0,\infty;L^p(\Omega))}^p  .\nonumber
\end{align}
We also solve for $u_1>0$:
\[ \partial_{1}f^\eps      =\frac{1}{u_1}\left\{ -\partial_t f^\eps- u_2^\eps\p_2 f^\eps- u_3^\eps\p_3 f^\eps+ h^\eps\right\} \]
so that
\begin{align}
  \norm{
w\partial_{1}f^\eps}_{L^p(\Omega)}^p    \le c_1^{-p}  \left(1+\norm{(u_2^\eps,u_3^\eps) }_{L^\infty(\Omega)}^p\right)\norm{
  w\bar \p f^\eps
}_{L^p(\Omega)}^p+c_1^{-p}  \norm{
w h^\eps}_{L^p(\Omega)}^p .
\end{align}
Hence,
\begin{align}
  &  \norm{ w \bar \p f^\eps (t)}_{L^p(\Omega)}^p
 \ls   \norm{ w \bar \p  f(0)}_{L^p(\Omega)}^p+ \frac{c_2}{\alpha c_1}    \sup_{[0,t]}\norm{ w \bar \p  b }_{L^p(\gamma_-)}^p
\nonumber\\&\quad + (c_1\alpha)^{-p}\norm{   \bar \p u^\eps  }_{L^\infty((0,\infty)\times\Omega)}^p  \left(1+ c_1^{-p}+ c_1^{-p}\norm{(u_2^\eps ,u_3^\eps)  }_{L^\infty((0,\infty)\times\Omega)}^p\right) \norm{
  w\bar \p f^\eps
}_{L^p(\Omega)}^p
\nonumber\\&\quad + (c_1\alpha)^{-p}  \norm{   \bar \p u^\eps }_{L^\infty((0,\infty)\times\Omega)}^p  c_1^{-p} \norm{
w h^\eps}_{L^\infty(0,\infty;L^p(\Omega))}^p + (c_1\alpha)^{-p}  \norm{ w \bar \p  h^\eps }_{L^\infty(0,\infty;L^p(\Omega))}^p.\nonumber
\end{align}
Therefore,  by the assumption \eqref{alcon} and taking $\eps$ sufficiently small, we deduce
\begin{align}\label{esf2ee}
 &\norm{ w \bar \p f^\eps (t)}_{L^p(\Omega)}^p
  \ls   \norm{ w \bar \p  f(0)}_{L^p(\Omega)}^p+ \frac{c_2}{ c_1}    \sup_{[0,t]}\norm{ w \bar \p  b }_{L^p(\gamma_-)}^p
\nonumber\\&\quad  + (c_1\alpha)^{-p} \norm{   \bar \p u^\eps }_{L^\infty((0,\infty)\times\Omega)}^p c_1^{-p}   \norm{
w h^\eps}_{L^\infty(0,\infty;L^p(\Omega))}^p + (c_1\alpha)^{-p}  \norm{ w \bar \p  h^\eps }_{L^\infty(0,\infty;L^p(\Omega))}^p.
\end{align}
Based on the uniform estimates \eqref{esf2ee}, for $\eps$ smaller if necessary, we can take $\varepsilon\rightarrow0$ to find that $f$ is a strong solution to \eqref{transport1} and \eqref{initial1} and satisfies the estimate \eqref{esf2}.

$\bullet$ {\it Proof of Part (3).} Recall that $f$ is well-defined by \eqref{expression1}  except along the hypersurface $\Gamma$ and the lateral boundary $[0,\infty)\times\gamma_0$. Since $3<p<\infty$, by the Aubin--Lions lemma and Sobolev's embedding, $f\in C([0,\infty)\times\bar\Omega)$, and therefore $f$ is well-defined by \eqref{expression1} for all $(t,x)\in[0,\infty)\times\bar\Omega$.

Now take $(t,x)\in \{t\ge0,x\in\gamma_0\}$.
If $(t,x)\in   Q_-$, by the invariant of Lemma \ref{inlem}, $(t,x)\in \{(t,x)   :t_{b}^{-}(t,x)>0,\ x_{b}^{-}%
(t,x) \in  \gamma_-\cap\gamma_0\}$. Since $b=0$ on $\gamma_-\cap\gamma_0$ and $h=0$ on $\gamma_0$, by \eqref{expression1}, we have
\[
f(t,x) =b(t_{b}^{-}(t,x), x_{b}^{-}(t,x) )+\int_{t_{b}^{-}(t,x)}^{t}h(\tau;X(\tau;t,x))d\tau=0
\]
due to that $x_{b}^{-}%
(t,x) \in \gamma_-\cap\gamma_0$ and $X(\tau;t,x)\in\gamma_0$ by Lemma \ref{inlem}.

If $(t,x)\in  Q_+$, by the invariant of Lemma \ref{inlem}, $(t,x)\in \{(t,x)   :t_{b}^{-}(t,x)=0,\ x_{b}^{-}%
(t,x) \in  \gamma_0\}$. Since $f_0=0$ on $\gamma_0$ and $h=0$ on $\gamma_0$, by \eqref{expression1}, we have
\[
f(t,x)
= f_{0}(x_{b}^{-}%
(t,x) )+\int_{0}^{t}h(\tau;X(\tau;t,x))d\tau=0
\]
due to that $x_{b}^{-}%
(t,x) \in  \gamma_0$ and $X(\tau;t,x)\in\gamma_0$ by Lemma \ref{inlem}.

From both and $f\in C([0,\infty)\times\bar\Omega)$, we conclude that $f=0$ on $\gamma_0$.
\end{proof}

\subsection{Vorticity Equations}
Now given $v$, we consider the vorticity inflow problem:
\begin{equation} \label{omegan}
\begin{cases}
\p_t\omega +(u_s+v )\cdot\nabla \omega =- v  \cdot\nabla   \omega_s+  \omega_s \cdot \nabla  v +  \omega  \cdot \nabla  (u_s +    v )\quad\text{in }\Omega,\\
\omega |_{t=0}=\omega_0,\quad \omega |_{\gamma_-}  =\omega_b^-.
\end{cases}
\end{equation}
\begin{theorem}\label{vorth}
Let $3<p<\infty$.
Suppose $v$ is given such that
\begin{equation}\label{bbav}
{\rm div}v =0 \text{ in }\Omega,\  v \cdot\nu=0\text{ on }\gamma_0,\  U+v_1>0\text{ in }\bar\Omega.
\end{equation}
(1) Assume the compatibility conditions: $
\omega_0|_{\gamma_-}=\omega_b^-(0)$.
There exists $\delta>0$ such that if
\beq\label{smallness}
\norm{  \nabla  U
}_{W^{2,p}(\Omega)} < \delta   \frac{\min_{\bar\Omega} U^2}{1+ \min_{\bar\Omega} U}
\text{ and } \sup_{0\le t<\infty}
\sum_{j=0}^1\norm{
\p_t^j  v(t)
}_{W^{2-j,p}(\Omega)}< \delta  \min_{\bar\Omega} U,
\eeq
then there is a unique solution $\omega$ to \eqref{omegan} satisfying
\begin{align}\label{omegaes}
& \sup_{0\le t<\infty}\sum_{j=0}^1\norm{
 \p_t^j \omega(t)
}_{W^{1-j,p}(\Omega)}^p
 \nonumber\\&\quad  \ls   \norm{
 \omega_0
}_{W^{1,p}(\Omega)}^p+    \sum_{j=0}^1\norm{
 \p_t^j \omega_b^-
}_{L^\infty(0,\infty;W^{1-j,p}(\gamma_-))}^p
+    \delta  \sup_{0\le t<\infty}\sum_{j=0}^1\norm{
\p_t^j  v(t)
}_{W^{2-j,p}(\Omega)}^p      .
\end{align}
(2) Moreover, if  $v_b^-:=v_1$ on $\gamma_-$ and $\omega_b^-$ satisfy \eqref{bbcondition1} and $\dive \omega_0=0$ in $\Omega$, then
\beq\label{omegazero}
\dive \omega=0\text{ in }\Omega;
\eeq
(3) For $D=(-1,1)^2$, if
$
\omega_s\times \nu=0$ on $\gamma_0$, $\omega_0\times \nu=0$ on $\gamma_0$ and $\omega_b^- \times \nu =0$ on $\gamma_-\cap\gamma_0$, then
\beq\label{oze2}
\omega\times \nu=0\text{ on }\gamma_0;
\eeq
For $D=B(0,1)$, if $
\omega_s=0$ on $\gamma_0$, $\omega_0=0$ on $\gamma_0$ and $\omega_b^- =0$ on $\gamma_-\cap\gamma_0$, then
\beq\label{oze22}
\omega =0\text{ on }\gamma_0.
\eeq
\end{theorem}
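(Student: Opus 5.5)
The plan is to prove (1) by Picard iteration built on Theorem~\ref{scalar2}. The total transport field is $u:=u_s+v$, and $u$ satisfies $\dive u=0$ and $u\cdot\nu=0$ on $\gamma_0$ by \eqref{bbav}. By \eqref{smallness} and the embedding $W^{2,p}\subset W^{1,\infty}$ (since $p>3$), $u_1\ge c_1:=\tfrac12\min U$ and $u_1\le c_2$.

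\emph{Iteration.} Set $\omega^{1}=0$ and let $\omega^{k+1}$ solve, componentwise, the transport problem
\[
\p_t\omega^{k+1}+u\cdot\nabla\omega^{k+1}=h^k:=-v\cdot\nabla\omega_s+\omega_s\cdot\nabla v+\omega^k\cdot\nabla(u_s+v),
\]
with data $\omega_0$ and $\omega_b^-$. Compatibility on $\gamma_-$ is inherited at every step.

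\emph{Estimating the forcing.} We apply parts (1) and (2) of Theorem~\ref{scalar2} with weight $w=e^{-\alpha x_1}$. Since $\Omega$ is bounded, $w\sim_\alpha 1$. We need $h^k\in W^{1,p}$ and $\p_t h^k\in L^p$.
\begin{itemize}
\item Since $\nabla u_s\in W^{2,p}\subset W^{1,\infty}$ and $\nabla v\in W^{1,p}\subset L^\infty$, the product rule gives $\norm{\omega^k\cdot\nabla u}_{W^{1,p}}\ls \norm{\nabla u}_{W^{1,p}}\norm{\omega^k}_{W^{1,p}}$.
\item For the time derivative, $\p_t(\omega^k\cdot\nabla v)=\p_t\omega^k\cdot\nabla v+\omega^k\cdot\nabla\p_t v$. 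The last term is bounded by $\norm{\omega^k}_{L^\infty}\norm{\p_t v}_{W^{1,p}}$, using $W^{1,p}\subset L^\infty$.
\item The inhomogeneous terms $-v\cdot\nabla\omega_s+\omega_s\cdot\nabla v$ are bounded by $\norm{\nabla U}_{W^{2,p}}\sum_j\norm{\p_t^jv}_{W^{2-j,p}}$. By \eqref{smallness}, this produces the $\delta\sup\norm{\p_t^j v}^p$ contribution in \eqref{omegaes}.
\end{itemize}

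\emph{Choice of $\delta$ and $\alpha$.} The smallness \eqref{smallness}, with the factor $\min U^2/(1+\min U)$, is exactly what makes \eqref{alcon} hold with $\alpha$ a fixed constant. It also makes the linear-in-$\omega^k$ part of the bound, with prefactor $(c_1\alpha)^{-p}\norm{\nabla u}_{W^{1,p}}^p(1+c_1^{-p})$, at most $\tfrac12$ when $\delta$ is small.

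\emph{Closing.} The bound on $\p_1\omega^{k+1}$ is recovered from the equation by dividing by $u_1$, as in the proof of Theorem~\ref{scalar2}. This gives uniform bounds for $\sum_j\norm{\p_t^j\omega^{k+1}}_{W^{1-j,p}}$. Applying \eqref{esf} to the differences $\omega^{k+1}-\omega^k$ gives a contraction in $L^\infty_tL^p$. Passing to the limit gives existence and \eqref{omegaes}, and uniqueness follows from the same difference estimate.

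For (2), write $\Omega:=\omega_s+\omega$. Then \eqref{omegan} is equivalent to $\p_t\Omega+u\cdot\nabla\Omega=\Omega\cdot\nabla u$. Taking the divergence and using $\dive u=0$, the terms $\p_ju_k\p_k\Omega_j$ cancel, so $\dive\Omega$ solves $\p_t\dive\Omega+u\cdot\nabla\dive\Omega=0$. Since $\dive\omega_s=0$, we have $\dive\Omega=\dive\omega$. The initial value vanishes because $\dive\omega_0=0$. On $\gamma_-$ we compute $\p_1\omega_1$ from the equation, using $\omega_{b,1}^-=0$ and $v_1=v_b^-$. A direct computation shows that $\dive\omega|_{\gamma_-}$ equals $U^{-1}$ (or $(U+v_b^-)^{-1}$) times the expression in \eqref{bbcondition1}, hence it vanishes. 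Here $\dive\omega\in L^p$ and $\p_t$ of the data lies only in $L^p$ on $\gamma_-$, so the computation is done at the level of weak solutions. Uniqueness in part (1) of Theorem~\ref{scalar2} then gives \eqref{omegazero}.

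For (3), continuity of $\omega$ (from $p>3$) lets us restrict to $\gamma_0$. By Lemma~\ref{inlem}, characteristics starting on $\gamma_0$ stay in $\gamma_0$, so we work along them with $\frac{d}{d\tau}\Omega=\Omega\cdot\nabla u$.
\begin{itemize}
\item \emph{Disk.} The right-hand side $(\nabla u)^{T}\Omega$ is linear in $\Omega$ with a bounded coefficient. The data vanish on $\gamma_0$ and on $\gamma_-\cap\gamma_0$, because $\omega_s=\omega_0=\omega_b^-=0$ there. Gronwall along each characteristic gives $\Omega=0$, hence $\omega=0$ on $\gamma_0$.
\item \emph{Square, face $\{x_2=1\}$.} We need $\Omega_1=\Omega_3=0$ there. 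The evolution of $\Omega_1$ contains $\Omega_2\p_2u_1$. Since $u_2=0$ on the face, $\p_1u_2=0$ there, so $\p_2u_1=-\Omega_3$ and the term is linear in $\Omega_3$. The evolution of $\Omega_3$ contains $\Omega_2\p_2u_3$; we hope it is likewise linear in $(\Omega_1,\Omega_3)$ via $\p_3u_2=0$ and $\Omega_1=\p_2u_3-\p_3u_2$, giving $\p_2u_3=\Omega_1$ on the face. The remaining terms $\Omega_1\p_1u_i$ and $\Omega_3\p_3u_i$ are already linear in $(\Omega_1,\Omega_3)$.
\end{itemize}
Thus $(\Omega_1,\Omega_3)$ satisfies a closed linear ODE along characteristics in the face, with zero data, and Gronwall gives \eqref{oze2}. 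The main obstacle is the edges of the square, where a characteristic may pass from one face to an adjacent one. There I would argue with both faces' conditions at once, where $\Omega=0$ entirely, or note that $u\cdot\nu=0$ on both faces keeps such characteristics on the edge.
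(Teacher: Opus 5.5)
Parts (1), (2) and the disk case of (3) follow the paper's route. That route consists of three steps:
\begin{itemize}
\item Picard iteration on Theorem \ref{scalar2} with $\alpha$ fixed.
\item The transport equation for $\dive\omega$ together with the trace computation on $\gamma_-$. This does give $(U+v_b^-)\dive\omega$ equal to the expression in \eqref{bbcondition1}.
\item Invariance of $\gamma_0$ under characteristics.
\end{itemize}
You run Gronwall along lateral characteristics for the limit solution, whereas the paper propagates the vanishing through the iteration via part (3) of Theorem \ref{scalar2}. That difference is inessential.

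\textbf{The gap is the square case of (3).} Write $\Omega=\omega_s+\omega$ as you do (not the domain). Your identities $\p_2u_1=-\Omega_3$ and $\p_2u_3=\Omega_1$ on $\{x_2=1\}$ presuppose $\curl u=\omega_s+\omega$, i.e.\ $\curl v=\omega$. In Theorem \ref{vorth}, however, $v$ is an arbitrary given field and $\omega$ solves the linear problem \eqref{omegan}, so nothing links them.

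What you actually get from $u_2=0$ on the face and $(\omega_s)_1=(\omega_s)_3=0$ there is $\p_2u_1=-(\curl v)_3$ and $\p_2u_3=(\curl v)_1$. Hence the face equations for $\Omega_1,\Omega_3$ carry the source terms $-(\p_3U+\omega_2)(\curl v)_3$ and $(\p_3U+\omega_2)(\curl v)_1$. These are not of the form (bounded coefficient)$\times(\Omega_1,\Omega_3)$, and $\p_3U+\omega_2$ is generically nonzero on $\{x_2=1\}$.

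The conclusion can then genuinely fail. Take smooth data and look at the face at $t=0$. There $u\cdot\nabla\omega_1=0$, since it is a tangential derivative of a function vanishing on the face. The equation therefore gives $\p_t\omega_1=-(\p_3U+\omega_{0,2})(\curl v)_3$, which need not vanish.

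The missing hypothesis is $\curl v\times\nu=0$ on $\gamma_0$. With it, both coupling coefficients vanish, $(\Omega_1,\Omega_3)$ solves a closed homogeneous linear system along face characteristics, and your Gronwall argument closes.

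The paper's own proof makes the same identification: it writes $\p_2(u_s+v)_1=-(\omega_s+\omega)_3$. The extra hypothesis does hold where Theorem \ref{vorth} is used in the proof of Theorem \ref{th2}. There $v=v^{(n)}$ with $\curl v^{(n)}=\omega^{(n)}$, and $\omega^{(n)}\times\nu=0$ on $\gamma_0$ by induction.

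The edges, by contrast, are not an obstacle. Since $u_2=0$ on $\{x_2=\pm1\}$ and $u_3=0$ on $\{x_3=\pm1\}$, ODE uniqueness as in Lemma \ref{inlem} shows that each closed face and each edge is invariant under the characteristic flow.
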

\begin{proof}
We construct the  solution $\omega$ to \eqref{omegan} by the iteration.

$\bullet$ {\it Proof of Part (1).}
We start with $\omega^{1}=0$, and  define iteratively $\omega^{l+1}$ as the solution to
\begin{equation}\label{omj}
\begin{cases}
\p_t\omega^{l+1} +(u_s+v )\cdot\nabla \omega^{l+1} =- v  \cdot\nabla   \omega_s+  \omega_s \cdot \nabla  v +  \omega^{l}  \cdot \nabla ( u_s +    v )\quad\text{in }\Omega,\\
\omega^{l+1} |_{t=0}=\omega_0,\quad\omega^{l+1}|_{\gamma_-} =\omega_b^- ,
\end{cases}
\end{equation}
which is guaranteed by Theorem \ref{scalar2}. Moreover, if we have assumed \eqref{smallness} such that \eqref{alcon} is satisfied, then by \eqref{esf2}, we deduce
\begin{align}\label{omegaj}
&\sup_{0\le t<\infty} \sum_{j=0}^1\norm{
w \p_t^j \omega^{l+1}(t)
}_{W^{1-j,p}(\Omega)}^p
  \nonumber\\&\quad\ls   \norm{ w \bar\p  \omega_0}_{L^p(\Omega)}^p+ \frac{c_2}{\alpha c_1}     \norm{ w \bar\p \omega_b^- }_{L^\infty(0,\infty;L^p(\gamma_-))}^p
 \nonumber\\&\qquad +   (c_1\alpha)^{-{p}}   e^{p\alpha} \norm{ \nabla  U
}_{W^{2,p}(\Omega)}^p    \sup_{0\le t<\infty}\sum_{j=0}^1\norm{
\p_t^j  v(t)
}_{W^{2-j,p}(\Omega)}^p
\nonumber\\&\qquad +    (c_1\alpha)^{-{p}} \norm{  \nabla  U
}_{W^{2,p}(\Omega)}^p \sup_{0\le t<\infty} \sum_{j=0}^1\norm{
w \p_t^j \omega^{l}(t)
}_{W^{1-j,p}(\Omega)}^p\nonumber
\\&\qquad +       (c_1\alpha)^{-{p}} e^{p\alpha}    \sup_{0\le t<\infty}\sum_{j=0}^1\norm{
\p_t^j  v(t)
}_{W^{2-j,p}(\Omega)}^p \sum_{j=0}^1\norm{
w \p_t^j \omega^{l}(t)
}_{W^{1-j,p}(\Omega)}^p.
\end{align}
Note that due to the factor $e^{p\alpha}$, in general, we could not expect to have smallness for the second term in the right hand side of \eqref{omegaj} by taking $\alpha$ large. So from now on, we fix $\alpha=1$. Then by \eqref{smallness}, we deduce from \eqref{omegaj} that
\begin{align}\label{omegaej}
&\sup_{0\le t<\infty}\sum_{j=0}^1\norm{
 \p_t^j \omega^{l}(t)
}_{W^{1-j,p}(\Omega)}^p
 \nonumber
 \\ &\quad\ls  \norm{
 \omega_0
}_{W^{1,p}(\Omega)}^p+      \sum_{j=0}^1\norm{
 \p_t^j \omega_b^-
}_{L^\infty(0,\infty;W^{1-j,p}(\gamma_-))}^p
  +    \delta \sup_{0\le t<\infty}\sum_{j=0}^1\norm{
\p_t^j  v(t)
}_{W^{2-j,p}(\Omega)}^p
\end{align}
by a simple induction on $l$. Moreover, $\{\omega^{l+1}-\omega^{l} \}$ satisfies
\begin{equation}
\begin{cases}
\p_t\{\omega^{l+1}-\omega^{l} \}+(u_s+v )\cdot\nabla \{\omega^{l+1}-\omega^{l} \} = \{\omega^{l}-\omega^{(l-1)} \} \cdot \nabla  (u_s +  v )\quad\text{in }\Omega,\\
\{\omega^{l+1}-\omega^{l} \}|_{t=0}=0,\quad \{\omega^{l+1}-\omega^{l} \}|_{\gamma_-}  =0.
\end{cases}
\end{equation}
Thus  $\omega^{l}$ is Cauchy in $L^{\infty}(0,\infty; L^p(\Omega))$:
\begin{align}
  \sup_{0\le t<\infty}\norm{    \{\omega^{l+1}-\omega^{l} \}(t)}_{L^p(\Omega)}^p & \ls\delta   \sup_{0\le t<\infty}\norm{    \{ \omega^{l+1}-\omega^{l}\}(t)  }_{L^p(\Omega)}
  \nonumber
  \\&<\frac{1}{2} \sup_{0\le t<\infty}\norm{     \{ \omega^{l+1}-\omega^{l}\}(t) }_{L^p(\Omega)}.
\end{align}
Then passing to the limit as $l\rightarrow\infty$ in \eqref{omegaj} leads to the solution $\omega$ to \eqref{omegan}, and the estimate \eqref{omegaes} follows from \eqref{omegaej}.

$\bullet$ {\it Proof of Part (2).} We follow the argument of \cite{GKM}.
Taking ${\rm div}$ to \eqref{omegan} yields, recalling $\dive v =\dive u_s=\dive \omega_s=0$,
\begin{align}  \label{ome11}
\p_t {\rm div }\omega  + (u_s+v)\cdot\nabla \dive\omega
=  \p_i(\omega_s +\omega)_j \p_j (u_s+v)_i-\p_i (u_s+v)_j  \p_j(\omega_s +\omega)_i =0.
\end{align}
Recalling the initial condition $\dive \omega_0=0$ in $\Omega$, it then suffices to guarantee $\dive \omega=0$ on $\gamma_-.$

Indeed, from the first component of \eqref{omegan}, using $\dive v =\dive u_s=0$, we deduce
\begin{align}\label{ome21}
\p_t  \omega_1+(u_s+v)_1 \dive \omega&+\p_2\left\{(\omega_s +\omega)_1(u_s+v)_2-(u_s+v)_1(\omega_s +\omega)_2\right\}\nonumber
\\& +\p_3\left\{(\omega_s +\omega)_1(u_s+v)_3-(u_s+v)_1(\omega_s +\omega)_3\right\}=0.
\end{align}
By taking the trace of \eqref{ome21} on (non-characteristic) $\gamma_-$, recalling $v_1=v_b^-$ and $\omega=\omega_b^-$ on $\gamma_-$ and $\omega_{b,1}^-=0$ on $\gamma_-$ from \eqref{bbcondition1}, we obtain
\begin{align}\label{ome212}
((u_s)_1+v_b^-) \dive \omega&+\p_2\left\{(\omega_s)_1(u_s+v)_2-((u_s)_1+v_b^-)(\omega_s+\omega_b^-)_2\right\}\nonumber
\\&+\p_3\left\{(\omega_s)_1(u_s+v)_3-((u_s)_1+v_b^-)(\omega_s+\omega_b^-)_3\right\}=0 \text{ on }\gamma_-.
\end{align}
Recalling $u_s=(U(x_2,x_3),0,0)$ and $\omega_s=(0,\p_{3}U,-\p_{2}U)$ so that
\[
(\omega_s)_1=0,\quad \p_2((u_s)_1 (\omega_s)_2)+\p_3((u_s)_1 (\omega_s)_3)=0 \text{ on }\gamma_-,
\]
 \eqref{ome212} then reduces to
 \beq\label{ome213}
(U+v_b^-) \dive \omega-\p_{2}(U\omega_{b,2}^-+v_b^-(\p_3U+\omega_{b,2}^-))-\p_{3}(U\omega_{b,3}^-+v_b^-(-\p_2U+\omega_{b,3}^-))=0\text{ on }\gamma_-.
\eeq
Now \eqref{ome213} together with the second condition in \eqref{bbcondition1} implies
\begin{equation}
(U+v_b^-) \dive \omega =0 \text{ on }\gamma_- \Longrightarrow \dive \omega =0 \text{ on }\gamma_-
\end{equation}
since $U+v_b^->0$. This together with $\dive \omega_0=0$ in $\Omega$ leads to \eqref{omegazero}, by Theorem \ref{scalar2}.

$\bullet$ {\it Proof of Part (3).}
We first consider the circular pipe case. Since $
\omega_s=0$ on $\gamma_0$, we have $\omega_s \cdot \nabla  v=0$ on $\gamma_0$ and also $
 v  \cdot\nabla   \omega_s=0$ on $\gamma_0$ due to that $v\cdot\nu=0$ on $\gamma_0$. Combing with the $\omega$-dependence of the last term in \eqref{omegan}, $\omega_0=0$ on $\gamma_0$ and $\omega_b^- =0$ on $\gamma_-\cap\gamma_0$, we conclude \eqref{oze22} upon using part (3) of Theorem \ref{scalar2} in the iteration above of constructing $\omega$.

Now we consider the square pipe case, and we shall prove \eqref{oze2} only for the piece $\{x_2=1\}\cap\gamma_0$, and the proof for the remaining pieces is the same. Noting $\nu=e_2$ on $\{x_2=1\}\cap\gamma_0$, so we are going to show $\omega_1=\omega_3=0$ on $\{x_2=1\}\cap\gamma_0$ under the assumptions $\omega_{s,1}=\omega_{s,3}=0$ on $\{x_2=1\}\cap\gamma_0$, $\omega_{0,1}=\omega_{0,3}=0$ on $\{x_2=1\}\cap\gamma_0$ and $\omega_{b,1}^-=\omega_{b,3}^-=0$ on $\gamma_-\cap\{x_2=1\}\cap\gamma_0$.
First, we have $
 v  \cdot\nabla   \omega_{s,1}= v  \cdot\nabla   \omega_{s,3}=0$ on $\{x_2=1\}\cap\gamma_0$ due to that $v\cdot\nu\equiv v_2=0$ on $\{x_2=1\}\cap\gamma_0$. Noting $\omega_s\cdot\nabla u_s=0$, we may rewrite the last two terms of \eqref{omegan} as
\beq
   \omega_s \cdot \nabla  v +  \omega  \cdot \nabla  (u_s +    v )
=  (\omega_s+ \omega ) \cdot \nabla  (u_s +  v ).\label{omega13}
\eeq
Since $(u_s +  v )\cdot \nu\equiv(u_s +  v )_2=0$ on $\{x_2=1\}\cap\gamma_0$,
 \begin{align}
 (\omega_s+ \omega ) \cdot \nabla  (u_s +  v )_1&=(\omega_s+ \omega)_1 \p_1 (u_s +  v )_1+(\omega_s+ \omega)_2 \p_2 (u_s +  v )_1+(\omega_s+ \omega)_3 \p_3 (u_s +  v )_1
\nonumber\\&=(\omega_s+ \omega)_1 \p_1 (u_s +  v )_1-(\omega_s+ \omega)_2 (\omega_s+ \omega)_3+(\omega_s+ \omega)_3 \p_3 (u_s +  v )_1
\nonumber\\&= \omega_1 \p_1 (u_s +  v )_1-(\omega_s+ \omega)_2 \omega_3+\omega_3 \p_3 (u_s +  v )_1 \nonumber
\end{align}
on $\{x_2=1\}\cap\gamma_0$ and similarly,
\beq
(\omega_s+ \omega ) \cdot \nabla  (u_s +  v )_3= \omega_1 \p_1 (u_s +  v )_3+(\omega_s+ \omega)_2 \omega_1+ \omega_3 \p_3 (u_s +  v )_3\nonumber
\eeq
on $\{x_2=1\}\cap\gamma_0$. These $\omega_1$-dependence and $\omega_3$-dependence of the first and third components of \eqref{omega13} together with the assumptions allow us to conclude \eqref{oze22} for the piece $\{x_2=1\}\cap\gamma_0$, similarly as the circular pipe case.
\end{proof}

\subsection{Div$-$Curl System}
Now we turn to the solvability and regularity of the div$-$curl system:
\begin{equation}
\begin{cases}
\curl v=\omega ,\quad \dive v =0\quad\!\text{ in }\Omega,
\\\dis v\cdot \nu=0 \text{ on }\gamma_0,\  v_1=v_{b,1}^\pm \text{ on }\gamma_\pm.
\end{cases}
\end{equation}
\begin{theorem}\label{vomth}
(1) Assume $\omega \in L^2(\Omega)$ with $\dive \omega=0$ in $\Omega$ and $v_{b,1}^\pm\in H^{1/2}(\gamma_\pm)$ satisfying \eqref{bbcondition}. Then there exists a unique solution $v\in H^1(\Omega)$ so that
\beq\label{dces1}
\norm{v}_{H^1(\Omega)}\ls \norm{\omega}_{L^2(\Omega)}+\norm{v_{b,1}^\pm}_{H^{1/2}(\gamma_\pm)}.
\eeq
(2) Assume further $\omega \in W^{1,p}(\Omega)$, $2\le p<\infty,$  $v_{b,1}^\pm\in W^{2-1/p,p}(\gamma_\pm)$ satisfying the condition in \eqref{oscondition31} for $D=(0,1)^2$ and \eqref{oscondition32} for $D=B(0,1)$, and $
\omega\times \nu=0$ on $\gamma_0$ for $D=(0,1)^2$  and $\omega =0$ on $\gamma_0$ for $D=B(0,1)$. Then
\beq\label{dces2}
\norm{v}_{W^{2,p}(\Omega)}\ls \norm{\omega}_{W^{1,p}(\Omega)}+\norm{v_{b,1}^\pm}_{W^{2-1/p,p}(\gamma_\pm)}.
\eeq
\end{theorem}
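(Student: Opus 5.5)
For part (1) I would split $v=\nabla\phi+\curl\psi$, i.e.\ first absorb the boundary flux with a potential and then solve a homogeneous div--curl problem. Concretely, let $\phi$ solve the Neumann problem
\[
\Delta\phi=0\text{ in }\Omega,\quad \p_\nu\phi=0\text{ on }\gamma_0,\quad \p_{x_1}\phi=v_{b,1}^\pm\text{ on }\gamma_\pm .
\]
It is solvable precisely because of \eqref{bbcondition}, and gives $\norm{\nabla\phi}_{H^1}\ls\norm{v_{b,1}^\pm}_{H^{1/2}}$, using $H^2$ regularity on the convex domain $\Omega$, which is a cube or a cylinder. Then $w=v-\nabla\phi$ must satisfy $\curl w=\omega$, $\dive w=0$ and $w\cdot n=0$ on all of $\p\Omega$.

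I would construct $w=\curl\psi$, where $\psi$ solves $-\Delta\psi=\omega$ with $\psi\times n=0$ and $\dive\psi=0$ on $\p\Omega$. This is the standard vector potential problem. Since $\dive\omega=0$ and $\Omega$ is simply connected with connected boundary, the harmonic fields vanish and $\curl\curl\psi=\omega$. The one point to check is whether $\omega\cdot n$ must have zero flux on each boundary component; the boundary is connected, so this holds automatically. Convexity then gives the Gaffney inequality $\norm{w}_{H^1}\ls\norm{\curl w}_{L^2}+\norm{\dive w}_{L^2}$ for $w\cdot n=0$, hence \eqref{dces1}. For uniqueness, the difference of two solutions is curl-free, divergence-free and tangential, so it is a harmonic Neumann field, and it vanishes because $\Omega$ is simply connected.

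For part (2) I would avoid the edges $\gamma_\pm\cap\gamma_0$ by even/odd reflection across the lateral surface, as the introduction indicates. In the square case, reflect across $x_2=\pm1$ with $v_1,v_3$ even and $v_2$ odd in $x_2$, and do the same for $x_3$. With this choice $\omega_1,\omega_3$ are odd and $\omega_2$ is even across $x_2=\pm1$. The hypothesis $\omega\times\nu=0$, i.e.\ $\omega_1=\omega_3=0$ on $x_2=\pm1$, is exactly what makes the odd extensions of $\omega_1,\omega_3$ lie in $W^{1,p}$. The even extension of $\omega_2$ lies in $W^{1,p}$ without further conditions. The boundary data extend evenly, and \eqref{oscondition31} makes these even extensions lie in $W^{2-1/p,p}$. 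After reflecting repeatedly (the problem becomes periodic of period $4$ in $x_2,x_3$), the equation lives on a slab $(-1,1)\times\mathbb{T}^2$ whose only boundaries are the flat faces $\gamma_\pm$, with no corners left. There I would solve $\Delta v=-\curl\omega$. The boundary conditions on $x_1=\pm1$ are $v_1=v_{b,1}^\pm$ and $\p_1v_1=-\p_2v_2-\p_3v_3$, and for the tangential components $\p_1 v_j=\p_j v_1+(\pm\omega\text{ components})$, which are Neumann data. Flat-boundary $L^p$ elliptic theory (Agmon--Douglis--Nirenberg, or explicit Fourier series in $x_2,x_3$) then gives $W^{2,p}$ bounds. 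Uniqueness from part (1) identifies the restriction of this solution with $v$.

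In the circular case, reflection across $r=1$ is not an isometry, so I would use the stronger assumptions $\omega=0$ and $\nabla v_{b,1}=0$ on $\gamma_0$. I would extend $\omega$ by zero outside the pipe and extend $v$ by a Kelvin-type inversion $x'\mapsto x'/|x'|^2$ in the cross-section. The vanishing conditions should make these extensions regular to first order across $r=1$. The aim is again to reduce to a problem on a region whose boundary consists only of smooth pieces meeting $\gamma_\pm$ transversally far from the original corner, and then apply local $W^{2,p}$ estimates near $\gamma_0$ and near $\gamma_\pm$ separately.

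\textbf{Main obstacle.} I expect the hardest step to be verifying that the reflected (or inverted) vorticity and boundary data keep exactly $W^{1,p}$ and $W^{2-1/p,p}$ regularity across the reflection planes, especially at the triple edges, and that the reflected $v$ still solves a div--curl system. In the circular case, finding an extension compatible with both $\curl$ and $\dive$ is the delicate part. If inversion fails, my fallback is to localize near the edge, straighten $\gamma_0$ to a plane, and use $\omega=0$ and $\nabla v_{b,1}=0$ there to show that the near-edge problem has data vanishing to the right order, so that the corner singular functions do not appear.
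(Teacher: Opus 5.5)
\textbf{Part (1) and the square pipe.} Your part (1) is the paper's argument. A Neumann potential absorbs the flux (solvable by \eqref{bbcondition}, and $H^2$ by convexity), then a tangential vector potential handles $\omega$, and uniqueness holds because $\Omega$ has no harmonic Neumann fields.

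In the square case your parity reflections are exactly the paper's; only the final elliptic step differs.
\begin{itemize}
\item The paper cuts off with $\chi$ inside a smooth domain $E\subset(-1,1)\times(-2,2)^2$ and applies the Amrouche--Seloula $W^{2,p}$ div--curl estimate. This forces the bootstrap $H^1\to H^2\to W^{2,6}\to W^{2,p}$ to control the cutoff errors $\tilde v\times\nabla\chi$ and $\tilde v\cdot\nabla\chi$.
\item You periodize to $(-1,1)\times\mathbb{T}^2$ and solve a decoupled Laplace system, which avoids both the cutoff and the bootstrap.
\end{itemize}
Your route is sound, with two adjustments. First, impose only $v_1=v_{b,1}^\pm$, $\p_1v_2=\p_2v_{b,1}^\pm+\tilde\omega_3$ and $\p_1v_3=\p_3v_{b,1}^\pm-\tilde\omega_2$. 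Adding $\p_1v_1=-\p_2v_2-\p_3v_3$ would overdetermine $v_1$. Second, identify the solutions by checking that the reflected $v$ is an $H^1$ weak solution of the slab problem, which is unique once $v_2,v_3$ are taken odd. Going the other way is less direct: the slab, unlike $\Omega$, carries the nonzero field $(c,0,0)$, which is curl-free, divergence-free and has vanishing tangential components on $x_1=\pm1$.

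\textbf{The circular pipe: genuine gap.} The two extensions you propose are incompatible. If $v$ is extended by the inversion and $\omega$ by zero, then $\curl\tilde v\neq\tilde\omega$ in $r>1$. Pulling $v$ back as a $1$-form makes $\curl\tilde v$ the pulled-back vorticity. That field vanishes on $r=1$ (this is where $\omega=0$ on $\gamma_0$ enters), but not for $r>1$. So your extended pair satisfies no div--curl system with admissible data.

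The missing idea is how to handle the divergence. Even the consistent extension is not divergence-free, because no isometry of $\mathbb{R}^3$ exchanges the two sides of $r=1$.
\begin{itemize}
\item For your inversion, with $v$ pulled back as a $1$-form, conformality of $x'\mapsto x'/|x'|^2$ and $\dive v=0$ give $\dive\tilde v=(1-|x'|^{-4})\,\p_1\tilde v_1$ in $r>1$.
\item For the paper's reflection $r\mapsto 2-r$ ($v_1$, $rv_\theta$, $r\omega_r$ even; $v_r$, $\omega_\theta$, $r\omega_1$ odd) one gets $\dive\tilde v=\zeta_{ij}\p_i\tilde v_j+\zeta\cdot\tilde v$.
\end{itemize}
In both cases the defect is first order in $\tilde v$. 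Hence $\norm{\dive(\tilde v\chi)}_{W^{1,p}}$ in the $W^{2,p}$ div--curl estimate contains second derivatives of $v$ itself and cannot be treated as data.

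The paper closes the estimate as follows.
\begin{itemize}
\item The coefficients vanish on $r=1$, and $\chi$ is supported in $\{r<1+2\delta\}$, so outside the pipe only a collar where the coefficients are $O(\delta)$ is seen and the term is absorbed.
\item The a priori bound is justified by a contraction argument with $\zeta_{ij}$ replaced by $\tilde\chi\zeta_{ij}$.
\item One checks that $\zeta\cdot\tilde v\in W^{1,p}$ although $\zeta$ jumps at $r=1$, using $v_r=0$ there.
\end{itemize}
Your remark that the vanishing conditions make the extensions regular to first order concerns only the extended data, not the structure of the extended equations, so the argument as proposed does not close.

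Your fallback has the same defect. Straightening $\gamma_0$ produces a variable-coefficient div--curl operator and leaves a right-angle edge with $\gamma_\pm$. Ruling out edge singularities for $p>3$ would again need a reflection combined with a smallness or perturbation argument of this kind.
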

\begin{proof}
We first prove (1). We first homogenize the boundary condition. Thanks to \eqref{bbcondition}, we define $\psi$ with $\int_\Omega \psi=0$ as the unique $H^2$ solution to
\beq
\Delta\psi=0\text{ in }\Omega,\quad \p_1\psi= v_{b,1}^\pm\text{ on }\gamma_\pm,\ \nabla\psi\cdot \nu=0\text{ on }\gamma_0,
\eeq
see Theorem 3.2.1.2 in \cite{G1}; moreover,
\beq
\norm{ \psi}_{H^2(\Omega)}\ls  \norm{v_{b,1}^\pm}_{H^{1/2}(\gamma_\pm)}.
\eeq
We then solve $v=  v^\ast+\nabla\psi$ with $v^\ast$ as the solution to
\begin{equation}\label{vstareq}
\begin{cases}
\curl v^\ast=\omega ,\ \ \dive v^\ast=0\ \ \text{in }\Omega,\\
v^\ast \cdot\nu= 0 \  \text{on }\gamma_0,\ \,  v^\ast_1 =0   \ \text{on }\gamma_\pm.
\end{cases}
\end{equation}
Since $\dive \omega=0$ in $\Omega$, the existence of the unique $H^1$ solution $v^\ast$ to \eqref{vstareq} is guaranteed by Theorem 3.12 in \cite{ABDG}, which satisfies
\beq
\norm{v^\ast}_{H^1(\Omega)}\ls \norm{\omega}_{L^2(\Omega)}.
\eeq
We thus conclude \eqref{dces1}.

We will now prove (2) via a precise regular parity extension across $\gamma_0$.

$\bullet$ \underline{{\it Square Pipe:}}

We extend evenly $v_1$ (along with $v_{b,1}^\pm$), $v_3$, $\omega_2$ with respect to $\{x_2=\pm 1\}$, and since $v_2=\omega_1=\omega_3=0$ on $\{x_2=\pm 1\} $ we extend oddly $v_2$, $\omega_1$, $\omega_3$ with respect to $\{x_2=\pm 1\}$; for simplification of notation, the extensions are still denoted by $v,\omega$.  On the other hand, we further extend evenly $ v_1$ (along with $ v_{b,1}^\pm$), $ v_2$, $ \omega_3$ with respect to $\{x_3=\pm 1\}$, and since $v_3=\omega_1=\omega_2=0$ on $\{x_3=\pm 1\} $  we extend oddly $  v_3$, $  \omega_1$, $  \omega_2$ with respect to $\{x_3=\pm 1\}$; the extensions are now denoted by $\tilde v,\tilde\omega$ (and $\tilde v_{b,1}^\pm$). It is routine to check that
\begin{equation}\label{omegaeqeq3}
\begin{cases}
\curl  {\tilde v}= {\tilde\omega},\quad \dive  {\tilde v}=0\quad\text{in }  \tilde\Omega:=(-1,1)\times (-2,2)^2 ,\\ \dis
 {\tilde v}_1 = {\tilde v}_b^\pm   \quad \text{on } {\tilde \gamma}_\pm:=\{\pm1\}\times (-2,2)^2.
\end{cases}
\end{equation}
Moreover, by the extension, $ {\tilde \omega} \in W^{1,p}( {\tilde\Omega})$, $\tilde v\in H^1({\tilde\Omega})$,  $ {\tilde v}_b^\pm\in W^{2-1/p,p}(\tilde{ \gamma}_\pm)$, and
\beq\label{omes1}
  \norm{\tilde{  \omega}}_{W^{1,p}( {\tilde\Omega})}\ls \norm{\omega}_{W^{1,p}(\Omega)},\quad \norm{ { \tilde v}_b^\pm}_{W^{2-1/p,p}( {\tilde\gamma}_\pm)}\ls \norm{v_{b,1}^\pm}_{W^{2-1/p,p}(\gamma_\pm)},
\eeq
and by part (1),
\beq\label{omes2}
\norm{\tilde{  v}}_{H^1( {\tilde\Omega})}\ls \norm{v}_{H^1(\Omega)}\ls \norm{\omega}_{L^2(\Omega)}+\norm{v_{b,1}^\pm}_{H^{1/2}(\gamma_\pm)}.
\eeq

Now we choose a large smooth domain $E$ such that $(-1,1)\times (-7/4,7/4)^2\subset   E\subset  \tilde\Omega$, and take a smooth function $\chi=\chi(x_2,x_3)\in C_0^\infty((-7/4,7/4)^2)$ satisfying $\chi\equiv 1$ in $(-3/2,3/2)^2$. Then thanks to the cut-off function $\chi$, from \eqref{omegaeqeq3} we have
\begin{equation}\label{omegaeqeq4}
\begin{cases}
\curl(  \tilde v  \chi)= {\tilde\omega} \chi-{\tilde v}\times \nabla\chi  ,\quad \dive(  \tilde v   \chi)=\tilde v\cdot \nabla \chi   \quad \text{in }E,
\\ \tilde v  \chi\cdot \nu=\pm \tilde v_{b,1}^\pm   \chi \text{ on }\p E \cap \tilde\gamma_\pm,\quad \tilde v  \chi\cdot \nu= 0 \text{ on }\p E \cap \tilde\Omega,
\end{cases}
\end{equation}
where $\nu$ is the (smooth) unit outward normal to $E$ which takes $\pm e_1$ on $\p E \cap \tilde\gamma_\pm$.

We view $\tilde v \chi$ as the unique $H^1$ solution to the $div$--$curl$ system \eqref{omegaeqeq4} with the normal boundary condition.
Since now $E$ is a smooth domain, the classical $H^{2}(E)$ elliptic regularity estimate on \eqref{omegaeqeq4} (see Corollary 3.5 in \cite{AS}) together with \eqref{omes1} and \eqref{omes2}  implies
\begin{align}
  \norm{  v }_{H^2(\Omega)}\le \norm{\tilde v\chi}_{H^2(E)}& \ls \norm{ \tilde\omega }_{H^{1}(E)}
+\norm{ \tilde v }_{H^{1}(E)}+\norm{ \tilde v_{b,1}^\pm   \chi }_{H^{3/2}(\p E \cap \tilde\gamma_\pm )}
\nonumber
\\&  \ls \norm{ \omega  }_{H^{1}(\Omega)}+\norm{    v}_{H^{1}(\Omega)}+ \norm{v_{b,1}^\pm}_{H^{3/2}(\gamma_\pm)}\ls \norm{ \omega  }_{H^{1}(\Omega)} + \norm{v_{b,1}^\pm}_{H^{3/2}(\gamma_\pm)}.\nonumber
\end{align}
This yields \eqref{dces2} for $p=2$. Next, for $2<p\le 6$, by Sobolev's embedding, $v\in H^2(\Omega)\subset W^{1,p}(\Omega)$, and hence $\tilde v\in W^{1,p}(\tilde\Omega)$ by the extension. Then repeating the argument above by using the $W^{2,p}(E)$ regularity estimate on \eqref{omegaeqeq4} (see Corollary 3.5 in \cite{AS}), by \eqref{omes1} and \eqref{dces2} for $p=2$, we deduce
\begin{align}
\norm{v}_{W^{2,p}(\Omega)}\nonumber
&\ls  \norm{\omega}_{W^{1,p}(\Omega)}+\norm{    v}_{W^{1,p}(\Omega)}+\norm{v_{b,1}^\pm}_{W^{2-1/p,p}(\gamma_\pm)}
\\&\ls  \norm{\omega}_{W^{1,p}(\Omega)}+\norm{    v}_{H^{2}(\Omega)}+\norm{v_{b,1}^\pm}_{W^{2-1/p,p}(\gamma_\pm)}\ls  \norm{\omega}_{W^{1,p}(\Omega)}+\norm{v_{b,1}^\pm}_{W^{2-1/p,p}(\gamma_\pm)}.\nonumber
\end{align}
This yields \eqref{dces2} for $2<p\le 6$. Finally, for $6<p<\infty$, by Sobolev's embedding, $v\in W^{2,6}(\Omega)\subset W^{1,p}(\Omega)$ for $6<p<\infty$, and hence $\tilde v\in  W^{1,p}(\tilde\Omega)$ by the extension. Then repeating again the argument above, by \eqref{omes1} and \eqref{dces2} for $p=6$, we have
\begin{align}
\norm{v}_{W^{2,p}(\Omega)}\nonumber
&\ls  \norm{\omega}_{W^{1,p}(\Omega)}+\norm{    v}_{W^{1,p}(\Omega)}+\norm{v_{b,1}^\pm}_{W^{2-1/p,p}(\gamma_\pm)}
\\&\ls  \norm{\omega}_{W^{1,p}(\Omega)}+\norm{    v}_{W^{2,6}(\Omega)}+\norm{v_{b,1}^\pm}_{W^{2-1/p,p}(\gamma_\pm)}\ls  \norm{\omega}_{W^{1,p}(\Omega)}+\norm{v_{b,1}^\pm}_{W^{2-1/p,p}(\gamma_\pm)}.\nonumber
\end{align}
We thus conclude \eqref{dces2} for any $2\le p<\infty.$

$\bullet$ \underline{{\it Circular Pipe:}}

We want to extend $v,\omega$ (along with $v_{b,1}^\pm$) to be $\tilde v,\tilde \omega$ (and $\tilde v_{b,1}^\pm$), defined in a larger circular pipe $\tilde\Omega:=(-1,1)\times B(0,7/4)$ (and ${\tilde\gamma}_\pm:=\{\pm 1\}\times B(0,7/4)$). To this end, for $1/4\le r\le 1$,
we decompose in cylindrical coordinates
\[
 v (x_1,r,\theta)  =v_1 \boldsymbol{e}_1+ v_r \boldsymbol{e}_r + v_\theta \boldsymbol{e}_\theta ,\quad \omega(x_1,r,\theta)  =\omega_1 \boldsymbol{e}_1+ \omega_r \boldsymbol{e}_r + \omega_\theta \boldsymbol{e}_\theta .
\]
We recall
\begin{align}
&0={\rm div} v =\frac{\partial v_1}{\partial x_1}+ \frac{1}{r} \frac{\partial (r v_r)}{\partial r} + \frac{1}{r} \frac{\partial v_\theta}{\partial \theta }
= \frac{\partial v_1}{\partial x_1}+ \frac{\partial   v_r }{\partial r}+\frac{1}{r}  v_r  + \frac{1}{r} \frac{\partial v_\theta}{\partial \theta } \text{ in }\{ 1/4\le r< 1\},\label{oml0}
\\
&\omega={\rm curl} v =
\frac{1}{r} \left( \frac{\partial (r v_\theta)}{\partial r} - \frac{\partial v_r}{\partial \theta } \right) \boldsymbol{e}_1+
\left( \frac{1}{r} \frac{\partial v_1}{\partial \theta } - \frac{\partial v_\theta}{\partial x_1} \right) \boldsymbol{e}_r +
\left( \frac{\partial v_r}{\partial x_1} - \frac{\partial v_1}{\partial r} \right) \boldsymbol{e}_\theta \text{ in }\{ 1/4\le r< 1\}  \label{oml1}
\end{align}
and
\begin{equation}
0=v\cdot \nu= v\cdot \boldsymbol{e}_r=v_r,\quad \omega=0\text{ on }\{r=1\}.\label{oml2}
\end{equation}
We then extend evenly $v_1$ (along with $v_{b,1}^\pm$), $rv_\theta, r\omega_r$ with respect to $\{r=1\}$, and extend oddly $v_r,\omega_\theta,r\omega_1$; the extensions are denoted $\tilde v_1,\tilde v_r,\tilde v_\theta,\tilde \omega_1,\tilde \omega_r,\tilde\omega_\theta$, and we then define
\[
 \tilde\omega   =\tilde \omega_1 \boldsymbol{e}_1+ \tilde \omega_r \boldsymbol{e}_r + \tilde \omega_\theta \boldsymbol{e}_\theta ,\quad \tilde v    =\tilde v_1 \boldsymbol{e}_1+ \tilde v_r \boldsymbol{e}_r + \tilde v_\theta \boldsymbol{e}_\theta.
 \]

We claim
\begin{align}\label{omom}
{\rm curl} \tilde v= \tilde\omega,\quad{\rm div}  \tilde v   =\sum_{(i,j)=(2,3)}\zeta_{ij}  \p_{i}\tilde v_j +  \zeta \cdot\tilde v  \text{ in }\tilde \Omega,
\end{align}
where
\beq\label{zijf}
\zeta_{ij}=\zeta_{ij}(x_2,x_3)\in W^{1,\infty}(\{r\le 7/4\})\text{ satisfying }\zeta_{ij}=0 \text{ in } \{0\le r\le 1\}
\eeq
and
\beq\label{zf}
\zeta=\zeta(x_2,x_3)\in W^{1,\infty}(\{r\neq 1\})\text{ satisfying } \zeta=0\text{ in }\{0\le r< 1\}\text{ and }
\zeta\cdot \tilde v =0 \text{ on }  \{  r= 1\}.
\eeq
{\it Proof of the claim.} For $1<r<7/4$, we compute, by \eqref{oml1},
\begin{align}
({\rm curl}  \tilde{v})_1(x_1,r,\theta)&=\left[ \frac{1}{r}\left(\frac{\partial (r \tilde v_\theta)}{\p r}  - \frac{\partial \tilde v_r}{\partial \theta }\right) \right] (x_1,r,\theta)
\nonumber \\& =
\frac{1}{r} \left[ -\frac{\partial (r   v_\theta)}{\partial r} (x_1,2-r,\theta) + \frac{\partial   v_r}{\partial \theta } (x_1,2-r,\theta)\right]
\nonumber\\& =-
\frac{2-r}{r} \left[\frac{1}{r} \left(\frac{\partial (r   v_\theta)}{\partial r}  - \frac{\partial   v_r}{\partial \theta }\right) \right] (x_1,2-r,\theta) =\tilde \omega_1(x_1,r,\theta),\label{curl1}\\
({\rm curl} \tilde{v})_r(x_1,r,\theta)&=\left[ \frac{1}{r}\frac{\partial \tilde{v}_1}{\partial \theta }  - \frac{\partial \tilde{v}_\theta}{\partial x_1}\right](x_1,r,\theta)\nonumber
\\& =
\frac{1}{r} \frac{\partial  v_1}{\partial \theta }(x_1,2-r,\theta) - \frac{2-r}{r}\frac{\partial   {v}_\theta}{\partial x_1}(x_1,2-r,\theta)\nonumber
\\& = \frac{2-r}{r}\left[
\frac{1}{r} \frac{\partial  v_1}{\partial \theta } -  \frac{\partial   {v}_\theta}{\partial x_1} \right](x_1,2-r,\theta)
=\tilde{\omega}_r(x_1,r,\theta),\label{curl2}
\\
({\rm curl}  \tilde{v})_\theta(x_1,r,\theta)&=\left[\frac{\partial \tilde v_r}{\partial x_1}  - \frac{\partial \tilde v_1}{\partial r}\right](x_1,r,\theta)
\nonumber\\& =
-\frac{\partial   v_r}{\partial x_1}(x_1,2-r,\theta)+ \frac{\partial  v_1}{\partial r}(x_1,2-r,\theta)
\nonumber\\& =-\left[\frac{\partial v_r}{\partial x_1}- \frac{\partial v_1}{\partial r}\right](x_1,2-r,\theta)
=  \tilde{\omega}_\theta(x_1,r,\theta).\label{curl3}
\end{align}
On the other hand, by \eqref{oml0},
\begin{align}\label{vthe0}
\dive \tilde v (x_1,r,\theta) &= \left[ \frac{\partial \tilde v_1}{\partial {x_1}}+\frac{\partial  \tilde v_r}{\partial r} +\frac{1}{r}\tilde v_r  + \frac{1}{r} \frac{\partial \tilde v_\theta}{\partial \theta}  \right](x_1,r,\theta)\nonumber
 \\&=\left[ \frac{\partial   v_1}{\partial {x_1}}+\frac{\partial    v_r}{\partial r}\right] (x_1,2-r,\theta)-\frac{1}{r}  v_r(x_1,2-r,\theta) + \frac{1}{r}  \frac{2-r}{r}\frac{\partial   v_\theta}{\partial \theta} (x_1,2-r,\theta) .\nonumber
 \\&=-\left[ \frac{1}{r}  v_r   + \frac{1}{r} \frac{\partial v_\theta}{\partial \theta} \right](x_1,2-r,\theta) -\frac{1}{r}  v_r (x_1,2-r,\theta) + \frac{1}{r}  \frac{2-r}{r}\frac{\partial   v_\theta}{\partial \theta}(x_1,2-r,\theta)\nonumber
 \\&=\left[\left(\frac{1}{r}  -\frac{r}{(2-r)^2}\right)  \frac{\partial \tilde v_\theta}{\partial \theta}  + \left(\frac{1}{2-r}+\frac{1}{r}\right)\tilde v_r\right] (x_1,r,\theta).
\end{align}
Recalling that
\beq\label{vthe1}
\frac{\partial \tilde v_\theta}{\partial \theta}
= \frac{1}{r}\left[
x_2^{2}\frac{\partial \tilde v_2}{\partial x_2}
- x_2x_3\left(\frac{\partial \tilde v_2}{\partial x_3}+\frac{\partial \tilde v_{3}}{\partial x_2}\right)
+ x_2^{2}\frac{\partial \tilde v_2}{\partial x_3}
- \bigl(x_2 \tilde v_2 + x_3 \tilde v_3\bigr)\right],
\eeq
and by \eqref{oml2},
\beq\label{vthe2}
\tilde v_r = \frac{1}{r}(x_2\,\tilde v_2 + x_3\,\tilde v_3),\quad \tilde v_r=-v_r=0\text{ on }\{r=1\},
\eeq
\eqref{curl1}--\eqref{curl3} and inserting \eqref{vthe1} and \eqref{vthe2} into \eqref{vthe0} lead to the equations \eqref{omom} with \eqref{zijf} and \eqref{zf}. It should be emphasized that $\zeta_{ij}=0$ on $\{r= 1\}$ is due to the factor $\frac{1}{r}  -\frac{r}{(2-r)^2}$ in \eqref{vthe0} which vanishes at $r=1$, and $\zeta\cdot \tilde v =0$ on $\{  r= 1\}$ is due to $v\cdot \nu=0$ on $\{r=1\}$. The proof of the claim is thus completed.

Moreover, by the extension, ${\tilde \omega} \in W^{1,p}({\tilde\Omega})$, $\tilde v\in H^1({\tilde\Omega})$,  ${\tilde v}_b^\pm\in W^{2-1/p,p}({\tilde\gamma}_\pm)$, and
\beq\label{gg12}
  \norm{{\tilde \omega}}_{W^{1,p}({\tilde\Omega})}\ls \norm{\omega}_{W^{1,p}(\Omega)},\quad \norm{{\tilde v}_b^\pm}_{W^{2-1/p,p}({\tilde\gamma}_\pm)}\ls \norm{v_{b,1}^\pm}_{W^{2-1/p,p}(\gamma_\pm)},
\eeq
and by part (1),
\beq\label{gg13}
\norm{{\tilde v}}_{H^1({\tilde\Omega})}\ls \norm{v}_{H^1(\Omega)}\ls \norm{\omega}_{L^2(\Omega)}+\norm{v_{b,1}^\pm}_{H^{1/2}(\gamma_\pm)}.
\eeq

Now we choose a large smooth domain $E$ such that $(-1,1)\times B(0,7/4)\subset   E\subset  \tilde\Omega$, and take a smooth function $\chi=\chi(x_2,x_3)\in C_0^\infty(B(0,1+2\delta))$ satisfying $\chi\equiv 1$ in $B(0,1+\delta)$ for sufficiently small $0<\delta<1/4$. Again, thanks to the cut-off function $\chi$, from \eqref{omom} we have
\begin{equation}\label{omegaeqeq4s}
\begin{cases}
\curl(\tilde{ v} \chi)= \tilde{\omega} \chi-\tilde{ v}\times\nabla\chi  ,\quad \dive(\tilde{ v } \chi)=  \zeta_{ij}    \p_{i}(\tilde v\chi)_j-\zeta_{ij}\tilde v_j\p_{i}\chi+\zeta \cdot\tilde v \chi +\tilde{ v}\cdot \nabla\chi  \text{ in }E,
\\ \tilde v  \chi\cdot \nu=\pm \tilde v_{b,1}^\pm   \chi \text{ on }\p E \cap \tilde\gamma_\pm,\quad \tilde v  \chi\cdot \nu= 0 \text{ on }\p E \cap \tilde\Omega,
\end{cases}
\end{equation}
where $\nu$ is the (smooth) unit outward normal to $E$ which takes $\pm e_1$ on $\p E \cap \tilde\gamma_\pm$.

{\it A Prior $W^{2,p}$ Estimate for \eqref{omegaeqeq4s}.}
It is essentially crucial that $|\zeta_{ij}|\ls\delta $ in the support of $\chi$ by \eqref{zijf} due to $\zeta=0$ on $\{r=1\}$, and we then view $\tilde v \chi$ as the unique $H^1$ solution to the $div$--$curl$ system \eqref{omegaeqeq4s} with the normal boundary condition for such sufficiently small $\delta$.
We now prove the higher regularity estimates. It is important that $\zeta \cdot\tilde v \in W^{1,p}(\tilde \Omega)$ (even $\zeta\notin W^{1,p}$) if $v\in W^{1,p}(\Omega)$ by \eqref{zf} due to $\zeta \cdot\tilde v=0$ on $\{r=1\}$. Since now $E$ is a smooth domain, the classical $H^{2}(E)$ regularity estimates  on \eqref{omegaeqeq4s} (see Corollary 3.5 in \cite{AS}) together with \eqref{gg12} and \eqref{gg13} implies, recalling $|\zeta_{ij}|\ls\delta $,
\begin{align}
\norm{\tilde{ v} \chi}_{H^2(E)}
 &\ls  \norm{ \zeta_{ij}    \p_{i} ( \tilde{ v} \chi)_j}_{H^{1}(E)}+ \norm{ \tilde{\omega} }_{H^{1}(E)}
+\norm{  \tilde{ v}}_{H^{1}(E)}+\norm{ \tilde{ v}_b^\pm}_{H^{3/2}(\tilde{\gamma}_\pm)}
\nonumber
\\&  \ls  \delta\norm{\tilde{ v} \chi}_{H^2(E)}+ \norm{ \omega }_{H^{1}(\Omega)}
 +\norm{  v_{b,1}^\pm}_{H^{3/2}( {\gamma}_\pm)} ,\nonumber
\end{align}
which implies, for fixed $\delta$ sufficiently small,
\[
\norm{  v }_{H^2(\Omega)}\le\norm{\tilde{ v} \chi}_{H^2(E)} \ls   \norm{  {\omega} }_{H^{1}(\Omega)}
 +\norm{v_{b,1}^\pm}_{H^{3/2}( \gamma_\pm)}.
\]
This yields the {\it a priori} estimate \eqref{dces2} for $p=2$.
On the other hand, repeating  the argument above in instead $W^{2,6}$ and $W^{2,p} $ with $6<p<\infty$ by letting $\delta$ be sufficiently small, we can bootstrap similarly as the square pipe case to conclude the {\it a priori} estimate \eqref{dces2} for any $2\le p<\infty.$

Thanks to $\delta\ll1$, we can justify such a priori estimates by a standard contraction mapping argument in both $H^2$ and $W^{2,p}$ as well as the uniqueness of solution to the problem \eqref{omegaeqeq4s}, upon replacing $\zeta_{ij}$ by $\tilde \chi \zeta_{ij}$ with $\tilde \chi=\tilde \chi(x_2,x_3)\in C_0^\infty(B(0,1+3\delta))$ satisfying $\tilde \chi\equiv 1$ in $B(0,1+2\delta)$ (and in the support of $\chi$) to keep the smallness $|\tilde \chi \zeta_{ij}|\ls \delta$ in the argument.
\end{proof}

\subsection{3D Euler System}

We are now ready to prove our main Theorem \ref{th2}.

\begin{proof}[\bf Proof of Theorem \ref{th2}]
~

{\it Step 1. Vorticity Formula.}
We will first construct the solution $(\omega, v)$ to the vorticity formula \eqref{omegaeq} and \eqref{omegaeqv} by the iteration. We start with $\omega^{(1)}=0$.  Suppose $\omega^{(n)}$ is given with
$
 \dive \omega^{(n)}=0$ in $\Omega$,
$
\omega^{(n)}\times \nu=0$ on $\gamma_0
$ for $D=(0,1)^2$ and  $
\omega^{(n)}=0$ on $\gamma_0$ for $D=B(0,1)$, we construct $v^{(n)}$ as the solution to the {\it div}--{\it curl} problem
\begin{equation} \label{last1}
\begin{cases}
{\rm curl\,}v^{(n)}=\omega^{(n)},\  \dive v^{(n)}=0\ \,\text{in }\Omega,
\\ v^{(n)}\cdot\nu= 0 \  \text{on }\gamma_0 ,\ \, v_1^{(n)}=v_{b,1}^\pm   \ \text{on }\gamma_\pm.
\end{cases}
\end{equation}
With such $v^{(n)}$, we then construct $\omega^{(n+1)}$ as the solution to the vorticity equations
\begin{equation} \label{last2}
\begin{cases}
\p_t\omega^{(n+1)}+(u_s+v^{(n)})\cdot\nabla \omega^{(n+1)} =- v^{(n)} \cdot\nabla   \omega_s+  \omega_s \cdot \nabla  v^{(n)}+  \omega^{(n+1)} \cdot \nabla ( u_s +    v^{(n)})\quad\text{in }\Omega,\\
\omega^{(n+1)}|_{t=0}=\omega_0,\quad\omega^{(n+1)}|_{\gamma_-}=\omega_b^-.
\end{cases}
\end{equation}

$\bullet$ \textit{Induction Hypothesis:} We assume that
\beq\label{omegaesnn}
\sup_{0\le t<\infty}\sum_{j=0}^1 \norm{
 \p_t^j \omega^{(n)}(t)
}_{W^{1-j,p}(\Omega)}^p\le \delta\ll1.
\eeq

Beginning with such $\omega^{(n)}$ and recalling \eqref{bbcondition}, the solvability of \eqref{last1} for $v^{(n)}$ is guaranteed by Theorem \ref{vomth}:
 \begin{align}\label{vn}
&\sup_{0\le t<\infty}\sum_{j=0}^1\norm{
\p_t^j  v^{(n)}(t)
}_{W^{2-j,p}(\Omega)}^p\nonumber
\\&\quad
\ls  \sup_{0\le t<\infty}\sum_{j=0}^1\norm{
 \p_t^j \omega^{(n)}(t)
}_{W^{1-j,p}(\Omega)}^p+ \sup_{0\le t<\infty}\sum_{j=0}^1  \norm{\partial_t^j v_{b,1}^\pm(t)}_{W^{2-j-1/p,p}(\gamma_\pm)}^p.
\end{align}
Here we have also invoked Theorem \ref{vomth} for $\p_t v^{(n)}$.

Next, with such $v^{(n)}$ in hand, assuming $\eps_0$ in \eqref{small2} sufficiently small, $\delta\ll1$  and recalling from \eqref{thcom} the compatibility condition $\omega_0|_{\gamma_-}=\omega_b^-(0)$, the solvability of \eqref{last2} for $\omega^{(n+1)}$ is guaranteed by part (1) of Theorem \ref{vorth}:
\begin{align}\label{omegaesn}
&\sup_{0\le t<\infty}\sum_{j=0}^1\norm{
 \p_t^j \omega^{(n+1)}(t)
}_{W^{1-j,p}(\Omega)}^p
\nonumber\\&\quad\ls  \norm{
 \omega_0
}_{W^{1,p}(\Omega)}^p+     \sum_{j=0}^1\norm{
 \p_t^j \omega_b^-
}_{L^\infty(0,\infty;W^{1-j,p}(\gamma_-))}^p +    \delta\sup_{0\le t<\infty}\sum_{j=0}^1\norm{
\p_t^j  v^{(n)}(t)
}_{W^{2-j,p}(\Omega)}^p      .
\end{align}
Since $\delta\ll1,$ \eqref{vn} and \eqref{omegaesn} imply, recalling $\omega_0=\curl v_0$,
\begin{align} \label{omegaesnn+1}
 &\sup_{0\le t<\infty}\sum_{j=0}^1\left\{ \norm{
 \p_t^j \omega^{(n+1)}
}_{W^{1-j,p}(\Omega)}^p+\norm{
 \p_t^j v^{(n+1)}
}_{W^{2-j,p}(\Omega)}^p\right\}
\\  &\quad\ls   \norm{
v_0
}_{W^{2,p}(\Omega)}^p+\sup_{0\le t<\infty} \sum_{j=0}^1 \norm{\partial_t^j v_{b,1}^\pm(t)}_{W^{2-j-1/p,p}(\gamma_\pm)}+\sum_{j=0}^1\norm{\partial_t^j\omega_b^-}_{L^\infty(0,\infty;W^{1-j,p}(\gamma_-))}\le \delta\nonumber
\end{align}
by induction over $n$ and choosing $\eps_0$ in \eqref{small2} sufficiently small. Moreover, recalling \eqref{bbcondition1} and \eqref{oscondition31}--\eqref{oscondition320}, by parts (2) and (3) of Theorem \ref{vorth}, we have that $
 \dive \omega^{(n+1)}=0$ in $\Omega$,
$
\omega^{(n+1)}\times \nu=0$ on $\gamma_0
$ for $D=(0,1)^2$ and  $
\omega^{(n+1)}=0$ on $\gamma_0$ for $D=B(0,1)$.  We therefore finish the construction of the sequence of approximate solutions $(\omega^{(n)},v^{(n)})$, and establish  the uniform bound \eqref{omegaesnn+1} by induction on $n$.

On the other hand, taking difference yields
\begin{equation}
\begin{cases}
\p_t\{\omega^{(n+1)}-\omega^{(n)}\}+(u_s+v^{(n)})\cdot\nabla\{\omega^{(n+1)}-\omega^{(n)}\}
\\\quad=
-(v^{(n)}-v^{(n-1)})\cdot\nabla (\omega_s+\omega^{(n)})+ (\omega_s+\omega^{(n)})\cdot \nabla (v^{(n)}-v^{(n-1)})
\\\qquad+  \{\omega^{(n+1)}-\omega^{(n)}\} \cdot \nabla  (u_s +  v^{(n)}) ,\\
\{\omega^{(n+1)}-\omega^{(n)}\}|_{t=0}=0,\quad \{\omega^{(n+1)}-\omega^{(n)}\}|_{\gamma_-} =0.
\end{cases}
\end{equation}
We apply the estimate \eqref{esf} with $p=2$ to obtain
\begin{align}
&\sup_{0\le t<\infty}\norm{\{\omega^{(n+1)}-\omega^{(n)}\}(t)}_{L^2(\Omega)}^2
\nonumber\\&\quad\ls \delta  \sup_{0\le t<\infty}\left\{ \norm{\{\omega^{(n+1)}-\omega^{(n)}\}(t)
}_{L^2(\Omega)}^2 +\norm{\{v^{(n)}-v^{(n-1)}\}(t)}_{H^1(\Omega)}^2 \right\} .
\end{align}
Also,
\begin{equation}
\begin{cases}
\curl \{v^{(n)}-v^{(n-1)}\}=\omega^{(n)}-\omega^{(n-1)},\quad \dive \{v^{(n)}-v^{(n-1)}\}=0\quad\text{in }\Omega,\\ \dis
\{v^{(n+1)}-v^{(n)}\} \cdot\nu= 0 \quad \text{on }\gamma_0 ,\quad \{v^{(n)}-v^{(n-1)}\}_1=0   \quad \text{on }\gamma_\pm.
\end{cases}
\end{equation}
By \eqref{dces1} in Theorem \ref{vomth},
\begin{align}
\norm{v^{(n)}-v^{(n-1)} }_{H^1(\Omega)} \ls \norm{\omega^{(n)}-\omega^{(n-1)} }_{L^2(\Omega)}   .
\end{align}
Therefore, for $\delta\ll 1$,
\begin{align}\label{contra}
&\sup_{0\le t<\infty}\left\{\norm{\{v^{(n+1)}-v^{(n)}\}(t)}_{H^{1}(\Omega)}  + \norm{\{\omega^{(n+1)}-\omega^{(n)}\}(t)}_{L^2(\Omega)} \right\}
\nonumber\\&\quad\le \frac{1}{2}\sup_{0\le t<\infty} \norm{\{\omega^{(n)}-\omega^{(n-1)}\}(t)}_{L^{2}(\Omega)} .
\end{align}
We then conclude that $\{(\omega^{(n)}, v^{(n)})\} $ is Cauchy. Then passing to the limit as  $n\rightarrow\infty$, we get the solution $(\omega, v)$ to the vorticity formula \eqref{omegaeq} and \eqref{omegaeqv}, with the estimate \eqref{goodes} from \eqref{omegaesnn+1}.

{\it Step 2. Recovering Euler System.} It remains to recover the momentum equations in \eqref{eulerv} and verify the initial condition $v|_{t=0}=v_0$.  We recall $\omega=\curl v$ from \eqref{omegaeqv} and then plug it into the vorticity equations \eqref{omegaeq} to find
\[
\p_t{\rm curl }v+(u_s+v)\cdot\nabla ({\rm curl\,} u_s+{\rm curl }v)=({\rm curl\,} u_s+{\rm curl }v)\cdot \nabla (u_s+v),
\]
which can be rewritten as
\[
{\rm curl }\left\{\p_t v +(u_s+v)\cdot\nabla v + v \cdot\nabla  u_s \right\}=0.
\]
It is then classical that there exists $p$ such that
\begin{equation}
\p_t v +(u_s+v)\cdot\nabla v + v \cdot\nabla  u_s=-\nabla p.
\end{equation}
On the other hand, valuing \eqref{omegaeqv} at $t=0$  yields
\begin{equation}\label{last10}
\begin{cases}
\curl v(0)=\omega(0),\quad \dive v(0) =0\quad\text{in }\Omega,\\ \dis
v(0) \cdot\nu= 0 \quad \text{on }\gamma_0 ,\quad v_1(0) =v_{b,1}^\pm(0)   \quad \text{on }\gamma_\pm.
\end{cases}
\end{equation}
Recalling the initial data $\omega(0)=\omega_0\equiv\curl v_0$,
and from \eqref{thcom} the compatibility conditions $\dive v_0=0$ in $\Omega$, $v_0\cdot \nu=0$ on $\gamma_0$ and $v_{0,1}=v_{b,1}^\pm(0)$ on $\gamma_\pm$, we thus have that $v(0)=v_0$ from the uniqueness of the $div$--$curl$ problem \eqref{last10}. We thus conclude \eqref{eulerv}.

{\it Step 3. Uniqueness.} The uniqueness of solutions to \eqref{eulerv} and \eqref{eulervBC} follows in the same way as leading to the contraction \eqref{contra} in the iteration above.
\end{proof}

\appendix
\section{3D Euler with Velocity Inflow}

There are three classical types of BCs for the incompressible Euler system \eqref{eulerv} through the pipe $\Omega$, namely, \eqref{eulervBC} and the following two \cite{AKM,GKM,KR1,KR,K,Y,Z1,Z2}:
\begin{align}
&v\cdot \nu=0 \text{ on }\gamma_0,\  \  v =v_b^-\text{ on }\gamma_-,\ \   p =p_b^+ \text{ on }\gamma_+;\label{BC2}\\
&v\cdot \nu=0 \text{ on }\gamma_0,\  \  v =v_b^-\text{ on }\gamma_-,\ \   v_1 =v_{b,1}^+ \text{ on }\gamma_+.\label{BC3}
\end{align}
In this appendix we will show the stability of shear flows for the 3D incompressible Euler system \eqref{eulerv} with the BC \eqref{BC2}, or \eqref{BC3} of a narrow pipe. The global regular square pipe flows are 3D, while circular pipe flows will be restricted to be axisymmetric with general nonzero swirls from initial-boundary inputs (zero on the lateral sides), in contrast to those in \cite{CH0,CH1,CH}. Such axisymmetry is necessary for compatibility at $\gamma_+\cap\gamma_0$ for the problem \eqref{eulerv} and \eqref{BC2} (see Remark 2 in \cite{KR}), while it is needed for \eqref{BC3} to ensure the crucial property $\omega=0$ on $\gamma_0$ as for Theorem \ref{th2}. Our new interior dissipation mechanism leads to a priori estimate for global stability, while we only outline the iterative constructions for completeness in light of classical works \cite{AKM,GKM,KR1,KR,Z2}.

A subtle technical issue for the study of these two types of BCs is to establish in addition the $W^{3,p}$ regularity theory for the Poisson equation:
\begin{align}\label{ppaeq}
\dis- \Delta  p  = F \quad\text{in }\Omega
\end{align}
with accordingly the following two classical types of BCs:
\begin{align}
& \nabla p\cdot \nu=0 \text{ on }\gamma_0,\ \ \p_1    p=  g_b^-\text{ on }{  \gamma}_-, \ \  p=  p_b^+\text{ on }{  \gamma}_+;\label{BC4}\\
&\nabla p\cdot \nu=0 \text{ on }\gamma_0,\ \ \p_1    p=  g_b^-\text{ on }{  \gamma}_-, \ \  \p_1 p=  g_b^+\text{ on }{  \gamma}_+ .\label{BC5}
\end{align}

\begin{theorem}\label{vomthp1}
(1) Assume $F\in L^p(\Omega) ,\ g_b^-\in W^{1-1/p,p}(  \gamma_-)$ and $ p_b^+\in W^{2-1/p,p}(  \gamma_+)$ with
$\nabla p_b^+\cdot\nu=0$ on $\gamma_+\cap\gamma_0 .
$
Then there exists a unique solution $p\in W^{2,p}(\Omega)$ to \eqref{ppaeq} and \eqref{BC4}:
\begin{align}\label{pes1}
\norm{p}_{W^{2,p}(\Omega)}
\ls \norm{ F}_{L^{p}(  \Omega)}+\norm{  g_b^-}_{W^{1-1/p,p}(  \gamma_-)} +\norm{  p_b^+}_{W^{2-1/p,p}(  \gamma_+)}.
\end{align}
(2) Assume $F \in W^{1,p}(\Omega)$, $g_b^-\in W^{2-1/p,p}(  \gamma_-)$ with $\nabla g_b^-\cdot\nu=0$ on $\gamma_- \cap \gamma_0$ and $ p_b^+\in W^{3-1/p,p}(  \gamma_+)$ with $\nabla p_b^+\cdot\nu=0$ on $\gamma_+\cap\gamma_0 .$
Then
\begin{align}\label{pes2}
\norm{p}_{W^{3,p}(\Omega)}
\ls \norm{ F}_{W^{1,p}(  \Omega)}+\norm{  g_b^-}_{W^{2-1/p,p}(  \gamma_-)} +\norm{  p_b^+}_{W^{3-1/p,p}(  \gamma_+)}.
\end{align}
\end{theorem}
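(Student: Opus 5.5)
The plan is to mimic the proof of Theorem \ref{vomth}: remove the lateral surface $\gamma_0$ by a parity extension, so that the problem becomes a mixed Neumann--Dirichlet problem on a slab-like domain that can be smoothed into a regular domain $E$ with boundary pieces $\tilde\gamma_\pm$. The only corner left is then where the two flat ends $\tilde\gamma_\pm$ are joined smoothly by a cap, and that cap carries a homogeneous condition.

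\textbf{Reduction and part (1).} First I homogenize the Neumann datum on $\gamma_-$ by subtracting a function $q$ with $\p_1 q=g_b^-$ on $\gamma_-$ and $\nabla q\cdot\nu=0$ on $\gamma_0$. A concrete choice is $q=(x_1-1)^2\tilde g/(-4)$ near $\gamma_-$, where $\tilde g$ is the harmonic-in-$(x_2,x_3)$ Neumann-compatible lift of $g_b^-$, cut off in $x_1$. Similarly, I subtract a lift of $p_b^+$ that is constant in $x_1$ near $\gamma_+$. The hypothesis $\nabla p_b^+\cdot\nu=0$ on $\gamma_+\cap\gamma_0$ is exactly what keeps these lifts compatible with the lateral Neumann condition. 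Existence and uniqueness in $H^1$ follow from Lax--Milgram, since the Dirichlet piece $\gamma_+$ has positive measure.

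\textbf{Extension across $\gamma_0$.} For the square pipe, I extend $p$, $F$ and the data evenly across $\{x_2=\pm1\}$ and $\{x_3=\pm1\}$. The Neumann condition $\p_\nu p=0$ makes the even reflection of a $W^{2,p}$ function again $W^{2,p}$, and $\Delta$ commutes with the reflection. For the circular pipe, I use the even reflection $r\mapsto 2-r$ in cylindrical coordinates, as in Theorem \ref{vomth}. This produces an equation $-\Delta\tilde p+\sum\zeta_{ij}\p_{ij}\tilde p+\zeta\cdot\nabla\tilde p=\tilde F$ in which $\zeta_{ij}=0$ on $r=1$, so $|\zeta_{ij}|\lesssim\delta$ on the support of the cutoff $\chi$. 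The first-order coefficient term vanishes on $r=1$ by the Neumann condition.

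\textbf{Localization and regularity.} I multiply by $\chi(x_2,x_3)$ and pose the problem on a smooth $E$. On the caps the condition is homogeneous Dirichlet, which is harmless because $\chi p$ vanishes there, and it meets $\tilde\gamma_\pm$ smoothly; the resulting mixed problem is split with a partition of unity. On $\tilde\gamma_-$ (Neumann) and $\tilde\gamma_+$ (Dirichlet) I apply classical $W^{2,p}$ Agmon--Douglis--Nirenberg estimates. The commutator terms involve only $\nabla p$ and $p$, and are bootstrapped from $H^1$ through $H^2$ and $W^{2,6}$ up to $W^{2,p}$ exactly as in Theorem \ref{vomth}. In the circular case the $\delta$-small second-order term is absorbed. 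This gives \eqref{pes1}.

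\textbf{Part (2) and the main obstacle.} For part (2) I differentiate tangentially: $\p_2 p$ and $\p_3 p$ (in the square case, after extension) satisfy the same type of problem with data $\p_i F$, $\p_i g_b^-$, $\p_i p_b^+$. Then $\p_1^2 p$ is recovered from the equation, yielding $W^{3,p}$. The extra hypotheses $\nabla g_b^-\cdot\nu=0$ and $\nabla p_b^+\cdot\nu=0$ on the corners make the even extensions of the data lie in $W^{2-1/p,p}$ and $W^{3-1/p,p}$ respectively.

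I expect the main obstacle to be twofold. First, the even extension must preserve $W^{3,p}$: this requires $\p_\nu$ of the second normal derivative to vanish. I would derive $\p_\nu^3 p=0$ on $\gamma_0$ by differentiating the equation normally and using $\p_\nu p=0$ together with $\p_\nu F=0$. If $\p_\nu F=0$ is not available, I would instead treat $\p_\nu p$ as solving a Dirichlet problem and use an odd extension. Second, in the circular case the reflected operator must stay $\delta$-perturbative at third order, which needs $\nabla\zeta_{ij}$ bounded while $\zeta_{ij}$ itself stays small near $r=1$.
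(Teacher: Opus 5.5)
Your proposal is correct and is essentially the paper's own (very brief) argument. You reflect evenly across $\gamma_0$ as in Theorem~\ref{vomth}, and the corner hypotheses on $p_b^+$ and $g_b^-$ are exactly what keep the reflected boundary data in the right trace spaces. Then you use a cutoff in $(x_2,x_3)$, local $W^{2,p}$/$W^{3,p}$ estimates at the flat ends $\tilde\gamma_\pm$, and the same bootstrap and absorption of the small $\zeta_{ij}$ term in the circular case.

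Two peripheral points should be dropped.

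First, the condition $\p_\nu^3 p=0$ is not needed. Even reflection across a flat face preserves $W^{3,p}$ as soon as $\p_\nu p=0$, and in any case you apply regularity to the reflected problem rather than reflecting a known $W^{3,p}$ function. It is also false in general: for smooth solutions $\p_\nu^3 p=-\p_\nu F$ on the flat lateral faces.

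Second, your $x_1$-independent lifts do not lie in $W^{2,p}$. This is harmless only because they enter solely the $H^1$ (Lax--Milgram) step, and there the Neumann datum needs no lift at all.
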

\begin{proof}
The existence of unique $H^1$  weak solution $p$ is standard, and in spirit of the proof of Theorem \ref{vomth} we can then prove the higher regularity of $p$ via the even extension across $\gamma_0$ combined with a cutoff arument; $\nabla p_b^+\cdot\nu=0\text{ on }  \gamma_+\cap\gamma_0
$ (resp. together with $\nabla g_b^-\cdot\nu=0$ on $\gamma_- \cap \gamma_0$) is necessary for the $W^{2,p}$ estimate \eqref{pes1} (resp. $W^{3,p}$ estimate \eqref{pes2}). Note that the BC $\nabla p\cdot \nu=0$ on $\gamma_0$ is also guaranteed by the even extension.
\end{proof}
\begin{theorem}\label{vomthp2}
(1) Assume $F \in L^p(\Omega)$ and $g_b^\pm\in W^{1-1/p,p}(  \gamma_\pm)$ with
$
\int_\Omega F=\int_{\gamma_-}g_b^--\int_{\gamma_+}g_b^+.
$
Then there exists a unique solution $p\in W^{2,p}(\Omega)$ with $\int_\Omega p =0$  to \eqref{ppaeq} and \eqref{BC5}:
\begin{align}\label{pes11}
\norm{p}_{W^{2,p}(\Omega)}
\ls \norm{F}_{L^{p}(  \Omega)}+\norm{  g_b^\pm}_{W^{1-1/p,p}(  \gamma_\pm)}  .
\end{align}
(2) Assume $F\in W^{1,p}(\Omega)$, $g_b^-\in W^{2-1/p,p}(  \gamma_-)$ with
$
\nabla g_b^-\cdot\nu=0\text{ on }\gamma_- \cap \gamma_0
$ and $g_b^-\in L^{2}(\gamma_+)$.
Then, denoting $\Omega_-:=\Omega\cap\{x_1\le 0\}$,
\begin{align}\label{pes22}
\norm{p}_{W^{3,p}(\Omega_-)}
\ls \norm{F}_{W^{1,p}(  \Omega)}+\norm{  g_b^-}_{W^{2-1/p,p}(  \gamma_-)} +\norm{  g_b^+}_{L^{2}(  \gamma_+)} .
\end{align}
\end{theorem}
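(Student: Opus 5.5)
The plan follows the template of Theorem \ref{vomthp1}: first obtain a weak $H^1$ solution, then upgrade regularity by an even extension across the lateral boundary $\gamma_0$ combined with cutoffs.

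\textbf{Part (1).} By the compatibility condition $\int_\Omega F=\int_{\gamma_-}g_b^--\int_{\gamma_+}g_b^+$, Lax--Milgram on the space $\{q\in H^1(\Omega):\int_\Omega q=0\}$ gives a unique weak solution of the pure Neumann problem:
\[
\int_\Omega\nabla p\cdot\nabla\phi=\int_\Omega F\phi+\int_{\gamma_+}g_b^+\phi-\int_{\gamma_-}g_b^-\phi .
\]
Here the sign convention is that the outward normal is $\mp e_1$ on $\gamma_\mp$, so that $\p_1p=g_b^\pm$ on $\gamma_\pm$.

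For the square pipe, reflect $p$, $F$ and $g_b^\pm$ evenly across $\{x_2=\pm1\}$ and then across $\{x_3=\pm1\}$. The homogeneous Neumann condition on $\gamma_0$ makes the reflected function a weak solution of $-\Delta\tilde p=\tilde F$ in $\tilde\Omega=(-1,1)\times(-2,2)^2$, with Neumann data on $\tilde\gamma_\pm$.

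For the circular pipe, use the even reflection in $r$ about $r=1$ in cylindrical coordinates. The Laplacian then acquires lower-order and small-coefficient terms, to be treated exactly as the $\zeta_{ij}$ terms in the proof of Theorem \ref{vomth}. Alternatively, one can use a localized boundary-flattening reflection.

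Next, cut off with $\chi(x_2,x_3)$ and regard $\tilde p\chi$ as the solution of a Neumann problem in the smooth domain $E$. The first-order commutators $\nabla\tilde p\cdot\nabla\chi$ and $\tilde p\Delta\chi$ lie in $L^p$ by bootstrapping: first $H^2$, then $W^{1,6}$, then general $p$, just as in Theorem \ref{vomth}. Classical $W^{2,p}$ theory for the Neumann problem on smooth $E$ (Agmon--Douglis--Nirenberg) then yields \eqref{pes11}. Uniqueness follows from the normalization $\int_\Omega p=0$.

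\textbf{Part (2).} Here no compatibility is available at the outflow corner $\gamma_+\cap\gamma_0$, which is why the estimate is only claimed on $\Omega_-$. Take a cutoff $\eta(x_1)$ with $\eta\equiv1$ on $[-1,0]$ and $\eta\equiv0$ near $x_1=1$. Then $q=\eta p$ solves
\[
-\Delta q=\eta F-2\eta'\p_1p-\eta''p,
\]
with $\p_1 q=g_b^-$ on $\gamma_-$, $\nabla q\cdot\nu=0$ on $\gamma_0$, and $q=0$ near $\gamma_+$.

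After the even reflection across $\gamma_0$, the condition $\nabla g_b^-\cdot\nu=0$ on $\gamma_-\cap\gamma_0$ ensures that the reflected Neumann datum $\tilde g_b^-$ lies in $W^{2-1/p,p}$. This is the first-order compatibility needed for $W^{3,p}$ regularity, and it should be checked carefully, including for the circular reflection. Applying $W^{3,p}$ Neumann regularity on a smooth domain to $\tilde q\chi$ controls $\norm{p}_{W^{3,p}(\Omega_-)}$ by
\[
\norm{F}_{W^{1,p}}+\norm{g_b^-}_{W^{2-1/p,p}}+\norm{p}_{W^{2,p}(\mathrm{supp}\,\eta)} .
\]

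\textbf{Main obstacle.} The interior term $\norm{p}_{W^{2,p}(\mathrm{supp}\,\eta)}$ must be controlled using only $\norm{g_b^+}_{L^2(\gamma_+)}$, rather than the $W^{1-1/p,p}$ norm used in Part (1). I would use interior and local regularity away from $\gamma_+$, which holds because $\mathrm{supp}\,\eta$ stays a fixed distance from $x_1=1$. By nested cutoffs, the local $W^{2,p}$ norm reduces to the $L^2$ norm of $p$ on a slightly larger region. The $L^2$ norm, in turn, comes from the $H^1$ energy estimate, whose boundary term is bounded by $\norm{g_b^+}_{L^2(\gamma_+)}\norm{p}_{L^2(\gamma_+)}$ and hence by trace and Poincaré. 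This closes the estimate with only the $L^2$ norm of $g_b^+$.
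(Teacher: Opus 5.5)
Your proposal is correct and follows the same route as the paper, whose own proof is only a brief sketch. The paper obtains an $H^1$ weak solution, uses even reflection across $\gamma_0$ with a cutoff to reduce to Neumann regularity on a smooth domain (in the spirit of Theorem \ref{vomth}), and adds a localization away from $\gamma_+$ for \eqref{pes22}. Your write-up fills in that sketch faithfully, including why $\nabla g_b^-\cdot\nu=0$ is needed for the reflected inflow data and the $H^1$ energy bound through which only $\norm{g_b^+}_{L^2(\gamma_+)}$ enters.
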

\begin{proof}
The proof follows in the same way as Theorem \ref{vomthp1}, and we utilize addtionally a localized argument away from $\gamma_+$ for establishing the estimate \eqref{pes22}.
\end{proof}

In the application of the  two theorems to the incompressible Euler system \eqref{eulerv},
\begin{align}\label{ppf}
&\dis F =F(v):=  2\sum_{i=2,3}\p_iU\p_1v_i +\sum_{i,j=2,3}\p_iv_j \p_jv_i  \text{ in }\Omega,
\\\label{pbbc1}
&g_b^-:=-\partial_t v^-_{b,1} +(U+v^-_{b,1})\sum_{i=2,3} \p_i v^-_{b,i}
- \sum_{i=2,3}v^-_{b,i}(\p_i v^-_{b,1}+ \p_i U)\text{ on }\gamma_-,
\\ &\label{pbbc2}
g_b^+=g_b^+(v):=-\partial_t v^+_{b,1} +(U+v^+_{b,1})\sum_{i=2,3} \p_i v_{i}
- \sum_{i=2,3}v_{i}(\p_i v^+_{b,1}+ \p_i U)\text{ on }\gamma_+,
\end{align}
and the circular pipe flows will be restricted to be axisymmetric so that the BC $\nabla p\cdot \nu=0$ on $\gamma_0$ can be enforced by requiring no swirl on $\gamma_0$ due to the curved nature.

\subsection{Velocity-pressure BC Problem}The main result of this section is
\begin{theorem}\label{2th2}
Let $3<p<\infty$. Assume $v_{b}^-\cdot\nu=0$ and $\nabla  g_b^-\cdot\nu=0$ on $\gamma_- \cap \gamma_0$, $\nabla p_b^+\cdot\nu=0$ and $\nabla \p_t p_b^+\cdot\nu=0$ on $\gamma_+ \cap \gamma_0$, and the compatibility conditions:
\begin{align}\label{thcom11}
&\dive v_0=0\text{ in }\Omega,\ v_0\cdot \nu=0\text{ on }\gamma_0,\  v_{0}=v_b^-(0) \text{ on }\gamma_-,
\\\label{thcom22}
&-(u_s+v_0)\cdot\nabla v_{0,i} -  \p_i  p_0=\p_tv_{b,i}^-(0)\text{ on }\gamma_-, \ i=2,3 ,
\end{align}
where  $p_0$ is the unique solution to
\begin{align}\label{omegaeq0}
- \Delta  p_0  =  F(v_0) \ \text{in }\Omega,
\ \nabla p_0\cdot \nu=0 \text{ on }\gamma_0,\   \p_1    p_0=  g_b^-(0)\text{ on }{  \gamma}_-, \    p_0=  p_b^+(0)\text{ on }{  \gamma}_+.
\end{align}
For the axisymmetric circular pipe flow,  assume further $v_{0,\theta}=0\text{ on }\gamma_0$ and $
v_{b,\theta}^- =0\text{ on }\gamma_-\cap \gamma_0$.
There exists a constant  $\delta>0$ such that if
$
\norm{\nabla U}_{W^{2,p}(\Omega)}< \delta  \min_{\bar\Omega} U ,
$
then there is a unique solution to  \eqref{eulerv} and \eqref{BC2} on $[0,\infty)$ satisfying
\begin{align}\label{2goodes}
&\sup_{0\le t<\infty} \sum_{j=0}^{1}\norm{\p_t^j v(t)}_{W^{2-j,p}(\Omega)}
\\&\quad \lesssim \norm{v_0}_{W^{2,p}(\Omega)}+\sup_{0\le t<\infty} \left\{\sum_{j=0}^2 \norm{\partial_t^j v_b^-(t)}_{W^{3-j-1/p,p}(\gamma_-)}+\sum_{j=0}^1\norm{\partial_t^jp_b^+(t)}_{W^{3-j-1/p,p}(\gamma_+)} \right\} \ll 1.\nonumber
\end{align}
\end{theorem}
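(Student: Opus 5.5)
The plan is to run an iteration in the velocity--pressure formulation rather than the vorticity--div--curl formulation of Theorem \ref{th2}. The interior dissipation still comes from the weighted transport estimates of Theorem \ref{scalar2}, now applied to each velocity component. The pressure is recovered from the Poisson problem \eqref{ppaeq} with the mixed BC \eqref{BC4}, using Theorem \ref{vomthp1}.

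Given $v^{(n)}$ with $\dive v^{(n)}=0$, $v^{(n)}\cdot\nu=0$ on $\gamma_0$ and small norm $\sup_t\sum_{j\le1}\|\p_t^jv^{(n)}\|_{W^{2-j,p}}\le\delta$, I first solve for $p^{(n)}$:
\[
-\Delta p^{(n)}=F(v^{(n)}),\qquad \nabla p^{(n)}\cdot\nu=0\ \text{on }\gamma_0,\qquad \p_1p^{(n)}=g_b^-\ \text{on }\gamma_-,\qquad p^{(n)}=p_b^+\ \text{on }\gamma_+ .
\]
Theorem \ref{vomthp1}(2), applied to both $p$ and $\p_tp$, gives a $W^{3,p}$ bound on $p^{(n)}$ and a $W^{2,p}$ bound on $\p_tp^{(n)}$. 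The required conditions are the assumed vanishing of $\nabla g_b^-\cdot\nu$, $\nabla p_b^+\cdot\nu$ and $\nabla\p_tp_b^+\cdot\nu$ on the corners. Since $\p_iU$ is small, the bound on $F(v^{(n)})=2\p_iU\p_1v_i^{(n)}+\p_iv_j^{(n)}\p_jv_i^{(n)}$ is small times the norm of $v^{(n)}$.

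Next I solve the linear transport system
\[
\p_tv^{(n+1)}+(u_s+v^{(n)})\cdot\nabla v^{(n+1)}=-v_2^{(n)}\p_2U\,e_1-v_3^{(n)}\p_3U\,e_1-\nabla p^{(n)},\qquad v^{(n+1)}|_{\gamma_-}=v_b^-,
\]
componentwise by Theorem \ref{scalar2}. I fix $\alpha=1$ and choose $\delta$ so that \eqref{alcon} holds with $c_1\sim\min U$. To reach $W^{2,p}$, I differentiate once more, in $\bar\p=(\p_t,\p_2,\p_3)$, and apply \eqref{esf2} to $\bar\p v$. The $\p_1$ derivatives are recovered from the equation using $u_1>0$. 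This is also where I need $\p_t v_b^-$ in the $W^{2-1/p,p}$ trace norms and the compatibility \eqref{thcom22}, since continuity across $\Gamma$ must hold for $\p_t v$.

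The forcing terms are then controlled. $\nabla p^{(n)}$ is estimated in $W^{2,p}$ and $\p_t\nabla p^{(n)}$ in $W^{1,p}$ by the previous step. The term $v^{(n)}\cdot\nabla U$ is small because $\|\nabla U\|_{W^{2,p}}\le\delta\min U$. Combined with the decay factor $(c_1\alpha)^{-p}$, this gives
\[
\|v^{(n+1)}\|\lesssim \text{data}+\delta\|v^{(n)}\|,
\]
which closes the induction for small data. Contraction in a low norm ($L^\infty_tL^2$ for $v$, with $p$ in $H^1$ via the $L^2$ version of Theorem \ref{vomthp1}(1)) then gives convergence and uniqueness, as in Step 1 of the proof of Theorem \ref{th2}.

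The main obstacle is that the iterate $v^{(n+1)}$ produced by transport need not be divergence free, nor satisfy $v\cdot\nu=0$ on $\gamma_0$. Divergence-freeness has to be recovered in the limit. Taking $\dive$ of the limiting equation, and using that $F$ equals $-\dive((u_s+v)\cdot\nabla v+v\cdot\nabla u_s)$ when $\dive v=0$, I get a transport equation
\[
(\p_t+(u_s+v)\cdot\nabla)\dive v=-\p_iv_j\,\p_jv_i+\dots,
\]
which is linear in $\dive v$ after the correct rewriting. Its inflow datum on $\gamma_-$ vanishes by the choice \eqref{pbbc1} of $g_b^-$, which is exactly the normal-component momentum equation on $\gamma_-$, together with $\dive v_0=0$. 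Theorem \ref{scalar2}(1) then gives $\dive v\equiv0$.

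For $v\cdot\nu=0$ on $\gamma_0$, I use the invariance of Lemma \ref{inlem} and Theorem \ref{scalar2}(3). In the square pipe, the normal component $v_2$ on $\{x_2=\pm1\}$ satisfies a transport equation whose forcing $-\p_2p$ vanishes there by $\nabla p\cdot\nu=0$. In the axisymmetric circular case, curvature contributes $v_\theta^2/r$ to the radial equation, so I impose the no-swirl condition on $\gamma_0$ and propagate it with part (3). Checking that these structural properties and the corner compatibilities propagate along the iteration is the delicate point, and I expect to handle it by building these properties into the induction hypothesis.
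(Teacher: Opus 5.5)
Your architecture matches the paper's proof: pressure from \eqref{ppaeq} with \eqref{BC4} via Theorem \ref{vomthp1}, weighted transport for $v$ with $\nabla p^{(n)}$ as forcing, $\dive v=0$ recovered only in the limit from a homogeneous transport equation whose inflow datum vanishes by the choice of $g_b^-$, and $v\cdot\nu=0$ (plus $v_\theta=0$ in the axisymmetric case) propagated along the invariant lateral surface.

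The step that fails as you set it up is the second-order estimate. Applying \eqref{esf2} to $f=\bar\p v$ includes $f=\p_t v$. Then \eqref{esf2} requires $\bar\p$ of the forcing of the $\p_t v$-equation in $L^\infty_tL^p$, and that contains $\p_t^2 v^{(n)}\cdot\nabla v^{(n+1)}$ and $\p_t^2\nabla p^{(n)}$. The first is outside your iteration class. The second would have to come from the Poisson problem for $\p_t^2 p^{(n)}$, whose boundary data $\p_t^2 p_b^+$ and $\p_t^2 g_b^-$ (hence $\p_t^3 v_{b,1}^-$) are not among the hypotheses; the theorem only assumes $\p_t^j p_b^+$ for $j\le 1$ and $\p_t^j v_b^-$ for $j\le 2$. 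Restricting to $f=(\p_2 v,\p_3 v)$ does not rescue it. Solving $\p_1^2 v$ from the equation needs $\p_1\p_t v$, and solving that from the equation needs $\p_t^2 v$, or $\p_1^2 v$ again.

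The paper instead applies one spatial derivative to the momentum equation and runs \eqref{esf2} on $\nabla v$. The forcing then needs only $\nabla^3 p$ and $\p_t\nabla^2 p$, which are exactly the bounds $p\in W^{3,p}$, $\p_t p\in W^{2,p}$ supplied by Theorem \ref{vomthp1}, and $\p_1\nabla v$ is read off from the differentiated equation. This requires an inflow datum for $\p_1 v$ on $\gamma_-$. You get it from the momentum equation on $\gamma_-$, in terms of $\p_t v_b^-$, tangential derivatives of $v_b^-$, and $\nabla p|_{\gamma_-}$. Its $t=0$ compatibility with $\p_1 v_0$ is the condition the paper derives from \eqref{thcom22}, and its time derivative is one place where $\p_t^2 v_b^-$ is used.

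Two smaller corrections:
\begin{itemize}
\item Drop $\dive v^{(n)}=0$ from your induction hypothesis. As you observe, the transport step cannot preserve it, and the paper carries only $v^{(n)}(0)=v_0$ and the lateral conditions.
\item Record that $v^{(n)}(0)=v_0$ gives $p^{(n)}(0)=p_0$ by uniqueness for \eqref{omegaeq0}. This is what makes \eqref{thcom22} the correct compatibility condition at every step of the iteration.
\end{itemize}
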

\begin{proof}
We solve \eqref{eulerv} and \eqref{BC2} via the velocity (transport) equations, and the Poisson equation \eqref{ppaeq}  with $F=F(v)$ given by \eqref{ppf} and \eqref{BC5} with $g_b^-$ given by \eqref{pbbc1}.

{\it A Priori Estimates.} We assume  $\sum_{j=0}^{1}\|\p_t^j v\|_{W^{2-j,p}(\Omega)}\le \varepsilon\ll1$. Since $\nabla  g_b^-\cdot\nu=0$ on $\gamma_- \cap \gamma_0$, $\nabla p_b^+\cdot\nu=0$ and $\nabla \p_t p_b^+\cdot\nu=0$ on $\gamma_+ \cap \gamma_0$, by part (2) of Theorem \ref{vomthp1}:
 \begin{align}\label{2vn}
 \sum_{j=0}^1\norm{
\p_t^j p
}_{W^{3-j,p}(\Omega)}
&\ls   \left(\norm{\nabla U}_{W^{1,p}(\Omega)}+\varepsilon\right)\sum_{j=0}^1\norm{
 \p_t^j   v
}_{W^{2-j,p}(\Omega)}
\nonumber
\\&\quad+\sum_{j=0}^1  \left\{\norm{\partial_t^j g_b^- }_{W^{2-j-1/p,p}(\gamma_-)}+      \norm{\partial_t^j p_b^+ }_{W^{3-j-1/p,p}(\gamma_+)} \right\} .
\end{align}
We deduce from the condition \eqref{thcom11} on $\gamma_-$ and \eqref{thcom22} the compatibility conditions
\beq
\p_1 v_{0,i}=- \frac{1}{U+v_{b,1}^-(0)}(\partial_t v_{b,i}^-(0) + \sum_{j=2,3}v_{b,i}^-(0)\p_j v_{b,i}^-(0)+  \p_3  p_0) \text{ on }\gamma_-,\ i=2,3,
\eeq
then by treating $\nabla p$ as a forcing term in the momentum equations, upon applying one spatial derivative, we follow the same proof as in Theorem \ref{scalar2} to obtain
\begin{align}\label{2omegaesn}
&\sup_{0\le t<\infty}\sum_{j=0}^1\norm{
 \p_t^j v (t)
}_{W^{2-j,p}(\Omega)}
\ls  \norm{
 v_0
}_{W^{2,p}(\Omega)}+     \sum_{j=0}^2\norm{
 \p_t^j v_b^-
}_{L^\infty(0,\infty;W^{2-j,p}(\gamma_-))}
\\&  \quad+ {c_1^{-1}}
\sup_{0\le t<\infty}\sum_{j=0}^1\norm{
\p_t^j   p (t)
}_{W^{3-j,p}(\Omega)}  +  \left({c_1^{-1}} \norm{\nabla U}_{W^{1,p}(\Omega)} +\varepsilon\right) \sup_{0\le t<\infty}\sum_{j=0}^1  \norm{
\p_t^j  v (t)
}_{W^{2-j,p}(\Omega)}    .\nonumber
\end{align}
We then conclude \eqref{2goodes} from \eqref{2vn} and \eqref{2omegaesn} by $
\norm{\nabla U}_{W^{2,p}(\Omega)}< \delta  c_1
$.

{\it Skecth of Iterations to Justify \eqref{2vn} and \eqref{2omegaesn}.} We start with $v^{(1)}(0)=v_0$.  Suppose $v^{(n)}$ is given with  $v^{(n)}(0)=v_0$ and  $v^{(n)}\cdot \nu=0$ on $\gamma_0$ (and  $v^{(n)}_\theta=0$ on $\gamma_0$  for the axisymmetric circular pipe flow), we construct $p^{(n)}$ as the solution to the Poisson equation
\begin{equation} \label{2last1}
- \Delta  p^{(n)}  =  F(v^{(n)}) \text{ in }\Omega,\
 \nabla p^{(n)}\cdot \nu=0 \text{ on }\gamma_0,\  \p_1    p^{(n)}=  g_b^-\text{ on }{  \gamma}_-, \   p^{(n)}=  p_b^+\text{ on }{  \gamma}_+ .
\end{equation}
With such $p^{(n)}$, we then construct $v^{(n+1)}$ as the solution to the transport equations
\begin{equation} \label{2last2}
\begin{cases}
\p_tv^{(n+1)}+(u_s+v^{(n)})\cdot\nabla v^{(n+1)} =- v^{(n)} \cdot\nabla   u_s+  \nabla p^{(n+1)} \quad\text{in }\Omega,\\
v^{(n+1)}|_{t=0}=v_0,\quad v^{(n+1)}|_{\gamma_-}=v_b^-.
\end{cases}
\end{equation}
Note that $v^{(n)}(0)=v_0$ and so valuing \eqref{2last1} at $t=0$ and \eqref{omegaeq0} lead to $p^{(n)}(0)=p_0$, and hence $v_0$ and $v_b^-$ satisfy the necessary compatibility conditions for the $W^{2,p}$ solvability of \eqref{2last2} by the condition \eqref{thcom11} on $\gamma_-$ and \eqref{thcom22} since $v^{(n)}\cdot \nu=0$ on $\gamma_0$.
Moreover, by projecting the equations along $\nu$ onto $\gamma_0$, we find that for the square pipe flow,
\beq
\p_t (v^{(n+1)}\cdot\nu)+ (u_s+v^{(n)} )\cdot \nabla (v^{(n+1)}\cdot\nu)  =0\text{ on }\gamma_0.
\eeq
Since  $v_0\cdot\nu=0$ on $\gamma_0$ and $v_b^-\cdot\nu=0$ on $\gamma_- \cap \gamma_0$, we then have
$
v^{(n+1)}\cdot\nu=0\text{ on }\gamma_0. 
$
While for the axisymmetric circular pipe flow, since $v_\theta^{(n)}=0$ on $\gamma_0$,
\beq\label{aaa}
\partial_t v_\theta^{(n+1)}+ (U+v^{(n)}_1)\partial_1 v_\theta^{(n+1)} = 0.
\eeq
By $v_{0,\theta}=0$ on $\gamma_0$ and $v_{b,\theta}^-=0$ on $\gamma_-\cap \gamma_0$, we then have
$
v^{(n+1)}_\theta=0
$ on $\gamma_0$.

Now we pass $n\rightarrow\infty$ to have the limit $(v,p)$ satisfying \eqref{eulerv} and \eqref{BC2} except ${\rm div} v=0$ in $\Omega$. It then follows by applying ${\rm div}$ to the velocity equations and the Poisson equation that
\beq
\partial_t {\rm div} v+(u_s+v)\cdot\nabla {\rm div} v     =-\p_i(u_s+v)_j \p_j(u_s+v)_i-\Delta p=0\quad \text{in }\Omega,
\eeq
and by solving from the first component of the velocity equations and using the BCs that
\beq
{\rm div} v=\frac{1}{(U+v_{b,1}^-)} (\partial_t v_{b,1}^- +\sum_{i=2,3}v_{b,i}^-(\p_i v_{b,1}^-+\p_i U)+\p_1    p )+ \sum_{i=2,3} \p_i v^-_{b,i}=0\text{ on }\gamma_-.
\eeq
Since ${\rm div} v_{0} =0$ in $\Omega$, we then have $
{\rm div} v=0$ in $\Omega.$   \end{proof}

\subsection{Velocity-velocity BC Problem for Narrow Pipes}
The main result of this section is

\begin{theorem}\label{th22}
Let $3<p<\infty$. Assume \eqref{bbcondition},  $
v_{b}^-\cdot\nu=0$ and $\nabla  g_b^-\cdot\nu=0$ on $\gamma_- \cap \gamma_0$,
and  the compatibility conditions:
\begin{align}\label{3thcom22}
&\dive v_0=0\text{ in }\Omega,\ v_0\cdot \nu=0\text{ on }\gamma_0,\ v_{0}=v_b^-(0) \text{ on }\gamma_-, \ v_{0,1}=v_{b,1}^+(0)\text{ on }\gamma_+,
\\
\label{3thcom2200}
&-(u_s+v_0)\cdot\nabla v_{0,i} -  \p_i  p_0=\p_tv_{b,i}^-(0)\text{ on }\gamma_-, \ i=2,3,
\end{align}
where $p_0$ is the unique solution with $\int_\Omega p_0=0$ to
\begin{align}\label{omegaeq022}
- \Delta  p_0  =  F(v_0) \ \text{in }\Omega,
\ \nabla p_0\cdot \nu=0 \text{ on }\gamma_0,\   \p_1    p_0=  g_b^-(0)\text{ on }{  \gamma}_-, \   \p_1    p_0=g_b^+(v_0)\text{ on }{  \gamma}_+.
\end{align}
$(i)$ for $D=(-h,h)^2$, assume further \eqref{oscondition31}  and  \eqref{oscondition310};

\hspace{-12pt}$(ii)$ for the axisymmetric circular pipe flow with $D=B(0,h)$,  assume further that $v_{0,\theta}=0\text{ on }\gamma_0$, 

\ $
v_{b,\theta}^- =0\text{ on }\gamma_-\cap \gamma_0$, \eqref{oscondition32} and \eqref{oscondition320}.

\noindent There exist constants  $\delta_1>0$  and $\delta_h>0$ such that if
$
0<h\le \delta_1$ and $\norm{\nabla U}_{W^{2,p}(\Omega)}< \delta_h \min_{\bar\Omega} U  ,
$
then there is a unique solution to \eqref{eulerv}  and \eqref{BC3} on $[0,\infty)$ satisfying
\begin{align}\label{goodes22}
&\sup_{0\le t<\infty} \sum_{j=0}^{1} \norm{\p_t^j v(t)}_{W^{2-j,p}(\Omega)}
\\&  \lesssim \norm{v_0}_{W^{2,p}(\Omega)}+\!\sup_{0\le t<\infty} \!\left\{\!\sum_{j=0}^2 \norm{\partial_t^jv_b^-(t)}_{W^{3-j-1/p,p}(\gamma_-)}+ \sum_{j=0}^1 \norm{\partial_t^j v_{b,1}^+(t)}_{W^{2-j-1/p,p}(\gamma_+)}\! \right\}:=\varepsilon_0\ll1.\nonumber
\end{align}
\end{theorem}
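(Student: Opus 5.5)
Following the proof of Theorem \ref{2th2}, I would solve \eqref{eulerv} and \eqref{BC3} through the velocity transport equations coupled to the Poisson problem \eqref{ppaeq}, \eqref{BC5} with $F=F(v)$ from \eqref{ppf}, $g_b^-$ from \eqref{pbbc1} and $g_b^+=g_b^+(v)$ from \eqref{pbbc2}. The new difficulty is that $g_b^+$ involves the unknown trace of $v$ on the outflow boundary $\gamma_+$. Moreover, part (2) of Theorem \ref{vomthp2} gives $W^{3,p}$ control of $p$ only on $\Omega_-$, because no compatibility can be imposed at $\gamma_+\cap\gamma_0$. To get around this, I would combine two estimates. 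Near $\gamma_-$, on $\Omega_-$, I would use velocity transport with $W^{3,p}(\Omega_-)$ pressure bounds, which handles the inflow compatibility \eqref{3thcom2200}. On the whole pipe, I would use the vorticity formulation exactly as in Theorem \ref{th2}. That is, I would use Theorem \ref{vorth} for $\omega$, noting that the inflow vorticity $\omega_b^-$ is now determined by $v_b^-$ and by $\p_1 v_i|_{\gamma_-}$, which are computed from the momentum equation and $\nabla p|_{\gamma_-}$. I would then recover $v$ from $\omega$ by Theorem \ref{vomth}, using $v_1=v_{b,1}^\pm$ on $\gamma_\pm$ and \eqref{bbcondition}. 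The vanishing conditions \eqref{oscondition31}--\eqref{oscondition320}, together with part (3) of Theorem \ref{vorth}, give $\omega\times\nu=0$ or $\omega=0$ on $\gamma_0$. In the circular case, axisymmetry with $v_\theta=0$ on $\gamma_0$, propagated by \eqref{aaa}, makes $\nabla p\cdot\nu=0$ on $\gamma_0$ consistent.

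For the a priori estimate, assume $\sum_j\|\p_t^jv\|_{W^{2-j,p}(\Omega)}\le\eps\ll1$. First, by \eqref{pes22}, $\sum_j\|\p_t^jp\|_{W^{3-j,p}(\Omega_-)}$ is bounded by $(\|\nabla U\|_{W^{1,p}}+\eps)\|v\|$ plus the data and plus $\|g_b^+(v)\|_{L^2(\gamma_+)}$. The last term is only an $L^2$ trace of first derivatives of $v$, which costs one derivative less. Second, taking the trace on $\gamma_-$ of the momentum equation gives $\omega_b^-$ in $W^{1,p}(\gamma_-)$ together with its time derivative, controlled by the $v_b^-$ norms in \eqref{goodes22} and by the pressure bound on $\Omega_-$. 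Third, estimate \eqref{omegaes} with $\alpha=1$ bounds $\omega$ in $W^{1,p}$ by the data plus $\delta_h\|v\|$. Fourth, \eqref{dces2} closes the bound on $v$.

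The main obstacle is the feedback loop through $g_b^+(v)$. Its $L^2(\gamma_+)$ norm involves $(U+v_{b,1}^+)\p_iv_i$, a term linear in $v$ with an $O(1)$ coefficient $U$. It feeds into $p$ on $\Omega_-$, hence into $\omega_b^-$, hence into $\omega$ and back into $v$, with no smallness coming from $\nabla U$. This is where I would use the narrow pipe. After rescaling the cross-section $D$ of size $h$, the dependence of $p$ in $\Omega_-$ on Neumann data at $\gamma_+$ decays like $e^{-c/h}$, since the Neumann Laplacian on a cross-section has spectral gap $\sim h^{-2}$. More precisely, only the cross-sectional mean of $g_b^+$ propagates at order one, and that mean is fixed by the compatibility condition in part (1) of Theorem \ref{vomthp2}. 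The mean-free part contributes at most $e^{-c/h}\|v\|$ to $\|p\|_{W^{3,p}(\Omega_-)}$. The mean itself is $\int_{\gamma_+}g_b^+$; I expect that, by the divergence theorem on $\gamma_+$ and $v\cdot\nu=0$ on $\partial D$, it reduces to $-\int_{\gamma_+}\p_tv_{b,1}^+$ plus quadratic terms plus terms involving $\nabla U$. So I would first prove a cross-sectional Fourier/Poincar\'e decay lemma for \eqref{BC5}, with constants tracked in $h$. I would then choose $h\le\delta_1$ so that $e^{-c/h}$ times the constants in \eqref{dces2} and \eqref{omegaes}, which may degenerate polynomially in $h$, is small. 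Finally, I would choose $\delta_h$, depending on $h$, to absorb the $\nabla U$ terms.

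The iteration and uniqueness then follow the proofs of Theorem \ref{2th2} and Theorem \ref{th2}. Given $v^{(n)}$, construct $p^{(n)}$ and $\omega^{(n+1)}$, then $v^{(n+1)}$. Contract in $L^\infty_t(H^1\times L^2)$ using \eqref{esf} with $p=2$ and \eqref{dces1}. The $L^2$ contraction needs the same narrow-pipe smallness for the linear term $U\p_iv_i$ in $g_b^+$. Finally, verify $\dive v=0$ and $\dive\omega=0$ as in part (2) of Theorem \ref{vorth}, and recover the momentum equation and $v(0)=v_0$ as in Step 2 of the proof of Theorem \ref{th2}.
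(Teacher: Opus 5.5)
Your a priori mechanism is a valid alternative to the paper's. The construction step, however, has a genuine gap.

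\textbf{The a priori estimate.} The paper never looks for smallness at top order. It uses the compact trace embedding $W^{3-j,p}(\Omega_-)\subset\subset W^{2-j,p}(\gamma_-)$ and interpolation to get $|\!|\!|(v,p)|\!|\!|\lesssim_h\varepsilon_0+\sup_t\norm{v(t)}_{L^2(\Omega)}$, with arbitrary $h$-dependent constants. This exploits your own observation that $g_b^+(v)$ enters only through a weaker norm. It then closes a standalone $H^1$ estimate in which the loop $v\to p\to\omega_b^-\to\omega\to v$ carries only explicit, $h$-independent constants:
\begin{itemize}
\item the identity bound $\norm{\nabla v}^2\le\norm{\omega}^2+\dots$;
\item the $L^2$ transport bound with factor $c_2c_1^{-3}$;
\item the energy bound $\norm{\nabla p}^2\le c_2^2\norm{\nabla v}^2+\dots$.
\end{itemize}
The gain is a factor $h$, coming from the cross-sectional Poincar\'e inequality for $\p_r p$, which vanishes on $\gamma_0$. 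Your $e^{-c/h}$ decay of mean-free Neumann modes is also correct. The mean mode has no cross-sectional gradient, so it never reaches $\omega_b^-$, and the decay is even smoothing in the data. The price is that you must prove every top-order constant (div--curl $W^{2,p}$, Sobolev embeddings on the thin pipe, transport) degenerates at most polynomially in $h^{-1}$. The paper's two-step structure avoids that bookkeeping.

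\textbf{The gap.} Your last paragraph claims that part (2) of Theorem \ref{vorth} and Step 2 of the proof of Theorem \ref{th2} recover \eqref{BC3}. They do not, for three reasons.

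First, part (2) of Theorem \ref{vorth} needs \eqref{bbcondition1}, in particular $\omega_{b,1}^-=0$. Here $\omega_{b,1}^-=\p_2v_{b,3}^--\p_3v_{b,2}^-$ is generically nonzero. The trace of \eqref{ome21} on $\gamma_-$ then contains fluxes involving the iterate's tangential traces $v_2,v_3$ on $\gamma_-$, which the div--curl problem \eqref{omegaeqv} does not prescribe. If you build $\omega_b^-$ from $v_b^-$ and $\nabla p$ as in \eqref{kap}, a direct computation gives, on $\gamma_-$,
\begin{equation*}
(U+v_{b,1}^-)\dive\omega^{(n+1)}=-\p_2\{\omega_{b,1}^-(v_2^{(n)}-v_{b,2}^-)\}-\p_3\{\omega_{b,1}^-(v_3^{(n)}-v_{b,3}^-)\},
\end{equation*}
which need not vanish. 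The next div--curl step is then not even solvable. The paper instead uses the boundary vorticity \eqref{3lastbd1} from \cite{AKM}. It is built with $v_2^{(n)},v_3^{(n)}$ and the Bernoulli head $\frac12|u_s+v_b^-|^2+p^{(n)}$ precisely so that these flux terms cancel $\p_t\omega_{b,1}^-$.

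Second, for iterates the Neumann problem \eqref{BC5} with $g_b^+(v^{(n)})$ is in general incompatible. Your remark that the mean of $g_b^+$ is fixed by compatibility holds only for an actual solution. This is why the paper adds the correction $\Phi^{(n+1)}$.

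Third, in the limit you have two pressures: $\pi$ from the curl-free argument, and the auxiliary $p$ that defined $\omega_b^-$. The div--curl system only imposes $v_1$ on $\gamma_\pm$. You must prove $\pi=p$ and $v_i=v_{b,i}^-$ on $\gamma_-$ for $i=2,3$ (hence $\Phi=0$) simultaneously. That is the argument of Lemma 2.1 in \cite{AKM}.

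Without these three ingredients, your limit solves the vorticity/div--curl system with $v_1=v_{b,1}^\pm$ and $\omega=\omega_b^-(p)$, but not \eqref{eulerv} with \eqref{BC3}.
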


Here the $\omega_b^-$ appearing in \eqref{oscondition310} (resp. \eqref{oscondition320}) is defined on $\gamma_-$ by \eqref{kap} below, and its value depends on $p$ in general, however, the value of $\omega_b^-\times \nu$ (resp. $\omega_b^-$) on $\gamma_-\cap\gamma_0$ for the square pipe flow (resp. the circular pipe flow) does not depend on $p$ due to the BC $\nabla p\cdot \nu=0$ on $\gamma_0$.
\begin{proof}
We solve \eqref{eulerv} and \eqref{BC3} via the vorticity equations \eqref{omegaeq} with $\omega_{b}^-=\omega_{b}^-(p)$ given by
\begin{equation}\label{kap}
\begin{cases}
\dis\omega_{b,1}^-:=\partial_2 v_{b,3}^--\partial_3 v_{b,2}^-,\\
\dis\omega_{b,2}^-: =   \partial_3 v_{b,1}^- + \frac{1}{U+v_{b,1}^-}(\partial_t v_{b,3}^- + \sum_{i=2,3}v_{b,i}^-\p_i v_{b,3}^-+  \p_3  p),
\\\dis\omega_{b,3}^- :=   -\partial_2 v_{b,1}^- - \frac{1}{U+v_{b,1}^-} (\partial_t v_{b,2}^- + \sum_{i=2,3}v_{b,i}^-\p_i v_{b,2}^- +  \p_2  p),
\end{cases}
\end{equation}
the $div$--$curl$ system \eqref{omegaeqv}, and the Poisson equation \eqref{ppaeq} with $F=F(v)$ given by \eqref{ppf} and \eqref{BC5} with $g_b^-$ given by \eqref{pbbc1} and $g_b^+=g_b^+(v)$ given by \eqref{pbbc2}.

{\it A Priori Estimates.} We assume  $|\|(v,p)\||:=\sum_{j=0}^{1}(\|\p_t^j v\|_{W^{2-j,p}(\Omega)}+\|\p_t^j p \|_{W^{2-j,p}(\Omega)})\le \varepsilon\ll1$.

{\it Step 1. Reduction to $H^1$ Stability.} We deduce from the condition \eqref{3thcom22} on $\gamma_-$ and \eqref{3thcom2200} the compatibility condition $\omega_0=\omega_{b}^-(0)$ on $\gamma_-$ for \eqref{omegaeq}, then by part (1) of Theorem \ref{vorth}:
  \begin{align}\label{3vnvv1}
&\sup_{0\le t<\infty}\sum_{j=0}^1\norm{
 \p_t^j \omega(t)
}_{W^{1-j,p}(\Omega)}^p
\\&\ls_h  \norm{
 \omega_0
}_{W^{1,p}(\Omega)}^p+     {c_2}{c_1^{-1}} \sum_{j=0}^1\norm{
 \p_t^j \omega_b^-(p)
}_{L^\infty(0,\infty;W^{1-j,p}(\gamma_-))}^p +   ( \delta_h+\varepsilon)\sup_{0\le t<\infty}\sum_{j=0}^1\norm{
\p_t^j  v(t)
}_{W^{2-j,p}(\Omega)}^p.\nonumber
\end{align}
The assumptions of Theorem \ref{vomth} will be satisfied so that
  \begin{align}\label{3vnvv2}
&\sum_{j=0}^1\norm{
\p_t^j  v
}_{W^{2-j,p}(\Omega)}
\ls_h  \sum_{j=0}^1\norm{
 \p_t^j \omega
}_{W^{1-j,p}(\Omega)}+ \sum_{j=0}^1  \norm{\partial_t^j v_{b,1}^\pm}_{W^{2-j-1/p,p}(\gamma_\pm)}.
\end{align}
Since $\nabla  g_b^-\cdot\nu=0$ on $\gamma_- \cap \gamma_0$, by part (2) of Theorem \ref{vomthp2}:
 \begin{align}\label{3vnvv3}
 \sum_{j=0}^1\norm{
\p_t^j p
}_{W^{3-j,p}(\Omega_-)}\nonumber
 &\ls_h  \left(\norm{\nabla U}_{W^{1,p}(\Omega)} +\varepsilon\right)\sum_{j=0}^1\norm{
 \p_t^j   v
}_{W^{2-j,p}(\Omega)}
\nonumber
\\&\quad+ \sum_{j=0}^1  \norm{\partial_t^j g_b^-}_{W^{2-j-1/p,p}(\gamma_-)}+\sum_{j=0}^1  \norm{\partial_t^j g_b^+(v)}_{L^{2}(\gamma_+)}.
\end{align}
By  $W^{3-j,p}(\Omega_-) \subset\subset W^{2-j,p}(\gamma_-)$ and the interpolation, replacing $\p_t v$ in the last term of \eqref{3vnvv3} by equations and using part (1) of Theorem \ref{vomthp2}, we then deduce from \eqref{3vnvv1}--\eqref{3vnvv3} that
\begin{align}  \label{333vn}
 \sup_{0\le t<\infty} |\!|\! |(v,p)(t) |\!|\!|
  \lesssim_h \varepsilon_0+ \sup_{0\le t<\infty} \norm{
v(t)
}_{L^{2}(\Omega)} .
\end{align}

{\it Step 2. Standalone $H^1$ Stability for $h\ll1$.} Note that there is no smallness in closing \eqref{333vn}. We now establish $H^1$ stability by estimates with explicit $h$ dependence and claim that for $h\ll1,$
\beq\label{1213}
\sup_{0\le t<\infty} \norm{
v(t)
}_{H^{1}(\Omega)}^2\ls_h \varepsilon_0^2+\varepsilon_0\sup_{0\le t<\infty} |\!|\! |(v,p)(t) |\!|\!| + \varepsilon\sup_{0\le t<\infty}|\!|\! |(p,v)(t) |\!|\!|^2.
\eeq
We then conclude \eqref{goodes22} from \eqref{333vn} and \eqref{1213}.

We may  focus only on axisymmetric circular pipe flows and square pipe flows follow similarly.

$\bullet$ \underline{Estimating $v$ in terms of $\omega$.} Note that
$
  |\omega |^2 = |\p_1 v_\theta |^2+|r^{-1}\p_r(r v_\theta)|^2+|\p_1 v_r|^2 +|\p_r v_1|^2-2\p_1 v_r  \p_r v_1
$, and we may integrate by parts in $r$ and $x_1$ to have, since $v_r=0$ on $\gamma_0$,
\[
\int_\Omega  \p_1 v_r  \p_r v_1
= -\int_\Omega  \frac{1}{r}\p_r\p_1 (r v_r ) v_1  = \int_\Omega  \frac{1}{r}\p_r(r v_r ) \p_1 v_1  -\int_{\gamma_+}\frac{1}{r}\p_r  (r v_r ) v_1v_{1,b}^++\int_{\gamma_-}\frac{1}{r}\p_r  (r v_r ) v_{1,b}^-.
\]
These imply, since ${\rm div} v=\frac{1}{r}\p_r  (r v_r)
+\p_1 v_1 =0$,
\beq\label{claiml1}
\norm{\p_1 v}_{L^2(\Omega)}^2+\norm{\p_r v_1}_{L^2(\Omega)}^2
+\|r^{-1}\p_r(r v_r)\|_{L^2(\Omega)}^2+\|r^{-1}\p_r(r v_\theta)\|_{L^2(\Omega)}^2 \le \norm{\omega}_{L^2(\Omega)}^2+C_h  \varepsilon_0  |\!|\! |v |\!|\!|.
\eeq

$\bullet$ \underline{Estimating $\omega$ in terms of $p$.} The transport vorticity estimates and the inflow BC of $\omega$ yield
\begin{align}\label{claiml2}
& \sup_{0\le t<\infty}\norm{ \omega  (t)}_{L^2(\Omega)}^2
 \le  c_2 c_1 ^{-1} \!\!   \norm{ \omega_b^-(p) }_{L^\infty(0,\infty;L^{2}(\gamma_-))}^2+C_h (\varepsilon_0^2+\!\sup_{0\le t<\infty} \! (\varepsilon_0|\!|\! |(v,p)(t) |\!|\!|+\varepsilon |\!|\! |(v,p)(t) |\!|\!|^2))\nonumber
\\&\quad\le  c_2 c_1^{-3}    \norm{\p_r p  }_{L^\infty(0,\infty;L^{2}(\gamma_-))}^2+C_h (\varepsilon_0^2+\sup_{0\le t<\infty} (\varepsilon_0|\!|\! |(v,p)(t) |\!|\!|+ \varepsilon|\!|\! |(v,p)(t) |\!|\!|^2)).
\end{align}

$\bullet$ \underline{Estimating $p$ in terms of $v$.}
Applying the Poincar\'e type argument, we have
\begin{align}\label{lastp1}
 \norm{ \p_r p }_{L^{2}(\gamma_-)}^2
 &\ls \norm{ \p_r p }_{L^{2}(\Omega_-)}^2+ \norm{ \p_r p }_{L^{2}(\Omega_-)}\norm{\p_1\p_r p }_{L^{2}(\Omega_-)} \nonumber
\\ &\ls h^2  \norm{r^{-1}\p_r (r\p_r p )  }_{L^{2}(\Omega_-)}^2+h   \norm{r^{-1}\p_r (r\p_r p )  }_{L^{2}(\Omega_-)} \norm{\p_1\p_r p }_{L^{2}(\Omega_-)}   .
\end{align}
Note that $|F|^2=|\Delta  p |^2 = |\p_{11}p|^2+ |r^{-1}\p_r (r\p_r p )|^2
  +2r^{-1}\p_r (r\p_r p )
\p_{11}p $, and we integrate by parts in $r$ and $x_1$ to have, since $\p_r p=0$ on $\gamma_0$,
\[\int_\Omega  \frac{1}{r}\p_r(r\p_r p)
\p_{11}p\chi^2 =-\int_\Omega   \p_r p
\p_r\p_{11}p\chi^2=\int_\Omega   |\p_1 \p_r p|^2\chi^2+2\int_\Omega   \p_r p
\p_r\p_{1}p\chi \p_1\chi+\int_{\gamma_-}   \p_r p  \p_r
g_b^-,\]
where $\chi=\chi(x_1) $ is a smooth function with $\chi=1$ for $x_1\le 0$ and $\chi=0$ for $x_1\ge 1/2$, and hence,
\begin{align}\label{lastp2}
\norm{\p_{11}p}_{L^{2}(\Omega_-)}^2+\|r^{-1}\p_r (r\p_r p ) \|_{L^{2}(\Omega_-)}^2+\norm{\p_1 \p_r p }_{L^{2}(\Omega_-)}^2
 \ls \norm{ \p_{1}p}_{L^2(\Omega)}^2+C_h\varepsilon_0  |\!|\! |p |\!|\!|.
\end{align}
On the other hand, integrating \eqref{ppaeq} multiplied by $p$ by parts over $\Omega$ and by \eqref{BC5}, we have
\[
\int_\Omega |\nabla p|^2 =- \int_\Omega  F p +\int_{\gamma_+}\p_1 p p -\int_{\gamma_-}\p_1 p p
\le \int_{\gamma_+}U\frac{1}{r}\p_r  (r v_r)  p  +C_h(\varepsilon_0^2+  \varepsilon_0  |\!|\! |p |\!|\!|+ \varepsilon|\!|\! |(p,v) |\!|\!|^2).
\]
Using further the Newton--Leibniz formula in $x_1\in (-h,0)$ and integrating by parts in $r$ and $x_1$, since $v=v_b$ on $\gamma_-$ and $v_r=0$ on $\gamma_0$, we deduce
\begin{align}
 \int_{\gamma_+}U\frac{1}{r}\p_r  (r v_r)  p &= \int_\Omega   \frac{1}{r}\p_1 \p_r  (r v_r) U p +\int_\Omega\frac{1}{r}\p_r  (r v_r) \p_1(U  p ) +\int_{\gamma_-}U\frac{1}{r}\p_r  (r v_{b,r}^-)  p\nonumber
\\&
\le-\int_\Omega    \p_1   v_r U \p_r  p +\int_\Omega\frac{1}{r}\p_r  (r v_r)U\p_1 p +C_h( \varepsilon_0  |\!|\! |p |\!|\!|+ \varepsilon|\!|\! |(p,v) |\!|\!|^2).
\nonumber
\end{align}
These two imply
\begin{align}\label{lastp3}
\norm{\nabla p}_{L^2(\Omega)}^2 \le
c_2^2 \norm{\p_1 v_r}_{L^2(\Omega)}^2+c_2^2
\|r^{-1}\p_r  (r v_r)\|_{L^2(\Omega)}^2  +C_h(\varepsilon_0^2+  \varepsilon_0  |\!|\! |p |\!|\!|+ \varepsilon|\!|\! |(p,v) |\!|\!|^2).
\end{align}
Combining \eqref{lastp1}--\eqref{lastp3} yields
\begin{align}\label{claiml3}
 \norm{ \p_r p }_{L^{2}(\gamma_-)}^2 \ls h c_2^2 (\norm{\p_1 v_r}_{L^2(\Omega)}^2+ 
\|r^{-1}\p_r  (r v_r)\|_{L^2(\Omega)}^2) +C_h(\varepsilon_0^2+  \varepsilon_0  |\!|\! |p |\!|\!|+ \varepsilon|\!|\! |(p,v) |\!|\!|^2) .
\end{align}

Consequently, chaining \eqref{claiml1}, \eqref{claiml2} and \eqref{claiml3} concludes the claim \eqref{1213} for $h\ll 1$.

{\it Sketch of Iterations to Justify \eqref{333vn} and \eqref{1213}.} We start with  $(v^{(1)},p^{(1)})=(v_0,p_0)$. Suppose $(v^{(n)},p^{(n)})$ are given with  $(v^{(n)},p^{(n)})(0)=(v_0,p_0)$, $\dive v^{(n)}=0$ in $\Omega$, and $v^{(n)}\cdot \nu=0$ and $\nabla p^{(n)}\cdot \nu=0$ on $\gamma_0$ (and $v^{(n)}_\theta =0$  on $\gamma_0$ for the axisymmetric circular pipe flow). We construct $\omega^{(n+1)}$ as the solution to the vorticity equations
\begin{equation} \label{3last2vor}
 \begin{cases}\!
\p_t\omega^{(n+1)}\!+\!(u_s+v^{(n)})\!\cdot\!\nabla \omega^{(n+1)}\! =\!- v^{(n)} \!\cdot\!\nabla   \omega_s\!+  \omega_s \!\cdot\! \nabla  v^{(n)}\!+  \omega^{(n+1)} \!\cdot\! \nabla \!( u_s +    v^{(n)})\! \text{ in }\Omega,\\\!
\omega^{(n+1)}|_{t=0}=\omega_0:=\curl v_0,
\end{cases}
\end{equation}
with the inflow BC of $\omega^{(n+1)}=\omega_b^-(v^{(n)},p^{(n)})$ on $\gamma_-$ given by
\begin{align}\label{3lastbd1}
\begin{cases}
\omega_{b,1}^{-}:=\partial_2 v_{b,3}^--\partial_3 v_{b,2}^-, \\
\dis\omega_{b,2}^{-}(v^{(n)},p^{(n)}):=-\p_3U+      \frac{1}{U+v_1}\left\{\partial_t v_{b,3}^-+  \p_3 \left(\frac{1}{2} |u_s+v_b^-|^2+    p^{(n)}\right)+  v_2^{(n)}\omega_{b,1}^{-}\right\}, \\
\dis\omega_{b,3}^{-}(v^{(n)},p^{(n)}): =  \partial_2 U-\frac{1}{U+v_1}\left\{\partial_t v_{b,2}^- +  \p_2 \left(\frac{1}{2} |u_s+v_b^-|^2+    p^{(n)}\right)- v_3^{(n)}\omega_{b,1}^{-}\right\}.
\end{cases}
\end{align}
Note that $(v^{(n)},p^{(n)})(0)=(v_0,p_0)$ guarantees the compatibility condition $
\omega_0=\omega_b^-(v^{(n)},p^{(n)})(0)$ on $\gamma_-$ from \eqref{3thcom22} and \eqref{3thcom2200}.   Here, following the idea of \cite{AKM}, the BCs in \eqref{3lastbd1} are designed so that  $ \dive \omega^{(n+1)}=0$ in $\gamma_-$ (see Lemma 3.1 in \cite{AKM} for the details), which implies further $ \dive \omega^{(n+1)}=0$ in $\Omega$ as in part (2) of Theorem \ref{vorth}.  On the other hand, for $D=(-h,h)^2$ note  from \eqref{oscondition310} that $ \omega_{b,1}^-=0$ on $\gamma_-\cap\gamma_0$ and  so $\omega_{b}^{-}\times \nu\equiv\omega_{b}^{-}(0,0)\times \nu=0$ on $\gamma_- \cap \gamma_0$ since $\nabla p^{(n)}\cdot \nu=0$ on $\gamma_0$,   which together with \eqref{oscondition310} implies $ \omega^{(n+1)}\times \nu=0$ on $ \gamma_0$ as in part (3) of Theorem \ref{vorth}; similarly, $ \omega^{(n+1)}=0$ on $\gamma_0$ for $D=B(0,h)$.

Now with such $\omega^{(n+1)}$ we then construct $v^{(n+1)}$ as the solution to the {\it div}--{\it curl} problem
\begin{equation} \label{3last122222}
\begin{cases}
{\rm curl\,}v^{(n+1)}=\omega^{(n+1)},\  \dive v^{(n+1)}=0\ \,\text{in }\Omega,
\\ v^{(n+1)}\cdot\nu= 0 \  \text{on }\gamma_0 ,\ \, v_1^{(n+1)}=v_{b,1}^\pm   \ \text{on }\gamma_\pm.
\end{cases}
\end{equation}
We now show $v^{(n+1)}_\theta=0$ on $\gamma_0$  for the axisymmetric circular pipe flow. First, since $\dive v^{(n)}=0$ in $\Omega$, we deduce from \eqref{3last2vor} and \eqref{3last122222} that there exists an axisymmetric $\pi^{(n+1)}$ so that
\begin{align}\label{3llhh1}
\partial_t  v^{(n+1)} =& \nabla v^{(n+1)}\cdot u_s
+\nabla v^{(n+1)}\cdot v^{(n)}
+\nabla u_s\cdot v^{(n)}
\nonumber\\&-(u_s\cdot\nabla)v^{(n+1)}
-(v^{(n)}\cdot\nabla)v^{(n+1)}
-(v^{(n)}\cdot\nabla)u_s -\nabla\pi^{(n+1)} .
\end{align}
Since $v^{(n)}_\theta =0$ and $v^{(n)}_r =0$ on $\gamma_0$, it follows from \eqref{3llhh1} that the crucial \eqref{aaa} holds also here.
Note that $\omega^{(n+1)}=0$ on $\gamma_0$ implies in particular $\p_1  v_\theta^{(n+1)}  =0$ on $\gamma_0$, and hence \eqref{aaa} reduces to be $\partial_t  v_\theta^{(n+1)} =0$ on  $\gamma_0$; on the other hand, as in the step 2 of the proof of Theorem \ref{th2} we have
  $ v^{(n+1)}(0)=v_0$ and so $ v_\theta^{(n+1)}(0) =0$ on $\gamma_0$ since $v_{0,\theta}=0$ on $\gamma_0$, and then
$v_\theta^{(n+1)}=0$ on $\gamma_0$.

Now, we construct $p^{(n+1)}$ as the solution to the Poisson equation
\begin{equation} \label{3last1vv}
\begin{cases}
\dis- \Delta  p^{(n+1)}  =  F(v^{(n+1)})-\Phi^{(n+1)},\ \Phi^{(n+1)}:=\frac{2}{|\Omega|} \int_{\gamma_-}\sum_{i=2,3} \p_i(U+v^{-}_{b,1})  (v_i^{(n+1)} -  v_{b,i}^-) \quad\text{in }\Omega,
\\ \nabla p^{(n+1)}\cdot \nu=0 \text{ on }\gamma_0,\ \p_1    p^{(n+1)}=  g_b^-\text{ on }\gamma_-, \  \p_1    p^{(n+1)}=g_b^+(v^{(n+1)})\text{ on }\gamma_+ .
\end{cases}
\end{equation}
Here $\Phi^{(n+1)}$ is included so that the necessary solvability condition of \eqref{3last1vv} is satisfied, where $v_1^{(n+1)}=v_{b,1}^+$  on $\gamma_+$ and $\eqref{bbcondition}$ are used. It should be remarked that there is no derivative term of $v^{(n+1)} -  v_{b}^-$ in $\Phi^{(n+1)}$ via a crucial integration by parts over $\gamma_-$ observed in \cite{AKM} (see (2.25) on page 160 in \cite{AKM}). Note that $ v^{(n+1)}(0)=v_0$ implies $\Phi^{(n+1)}(0)=0$ and hence $ p^{(n+1)}(0)=p_0$.

Now we pass $n\rightarrow\infty$ to have the limits $(v,p)$ and a $\pi$ such that
\begin{align} 
&\partial_t v +(u_s+v)\cdot\nabla v + v \cdot\nabla  u_s +  \nabla  \pi = 0 \quad\text{in }\Omega,\\
&\dis- \Delta  p   =  F(v) -\Phi,\ \Phi:=\frac{2}{|\Omega|}\int_{\gamma_-}\sum_{i=2,3} \p_i(U+v^{-}_{b,1}) (v_i - v_{b,i}^-)  \quad\text{in }\Omega ,
\end{align}
together with ${\rm div} v=0$ in $\Omega$  and the BC \eqref{BC3} except $v_i=v_{b,i}^-$ on $\gamma_-$, $i=2,3$. 
We can then follow the ingenious proof of Lemma 2.1 in \cite{AKM} to prove that $\pi=p$ and $v_i=v_{b,i}^-$ on $\gamma_-$, $i=2,3$ (and hence $\Phi=0$) simultaneously.
\end{proof}

\section*{Acknowledgements}
Yan Guo thanks Zhuolun Yang for stimulating discussions of stable shear profiles for 2D channel flows.
Yan Guo is supported in part by NSF grant DMS-2405051. Yanjin Wang is supported by NSFC grants 12571251 and 12231016.


\vspace{0.5cm}
\begin{thebibliography}{99}



\bibitem{A}
Amadori, Debora.
Initial-boundary value problems for nonlinear systems of conservation laws.
\emph{NoDEA Nonlinear Differential Equations Appl.} \textbf{4} (1997), no. 1, 1--42.

\bibitem{ABDG}
Amrouche, C.; Bernardi, C.; Dauge, M.; Girault, V.
Vector potentials in three-dimensional non-smooth domains.
\emph{Math. Methods Appl. Sci.} \textbf{21} (1998), no. 9, 823--864.

\bibitem{AS}
Amrouche, Ch\'erif; Seloula, Nour El Houda.
$L^p$-theory for vector potentials and Sobolev's inequalities for vector fields: application to the Stokes equations with pressure boundary conditions.
\emph{Math. Models Methods Appl. Sci.} \textbf{23} (2013), no. 1, 37--92.

\bibitem{AKM}
Antontsev, S. N.; Kazhikhov, A. V.; Monakhov, V. N.
Boundary value problems in mechanics of nonhomogeneous fluids.
Stud. Math. Appl., 22 North-Holland Publishing Co., Amsterdam, 1990.

\bibitem{arnold}
Arnold, Vladimir I.
Conditions for nonlinear stability of stationary plane curvilinear flows of an ideal fluid.
\emph{Sov. Math. Dokl.} \textbf{6} (1965), 773--777.

\bibitem{BM}
Bedrossian, Jacob; Masmoudi, Nader.
Inviscid damping and the asymptotic stability of planar shear flows in the 2D Euler equations.
\emph{Publ. Math. Inst. Hautes \'Etudes Sci.} \textbf{122} (2015), 195--300.






\bibitem{BB}
Bianchini, Stefano; Bressan, Alberto.
Vanishing viscosity solutions of nonlinear hyperbolic systems.
\emph{Ann. of Math. (2)} \textbf{161} (2005), no. 1, 223--342.

\bibitem{BCP}
Bressan, Alberto; Crasta, Graziano; Piccoli, Benedetto.
Well-posedness of the Cauchy problem for $n\times n$ systems of conservation laws.
\emph{Mem. Amer. Math. Soc.} \textbf{146} (2000), no. 694, viii+134 pp.


\bibitem{BLY}
Bressan, Alberto; Liu, Tai-Ping; Yang, Tong.
$L^{1}$ stability estimates for $n\times n$ conservation laws.
\emph{Arch. Ration. Mech. Anal.} \textbf{149} (1999), no. 1, 1--22.


\bibitem{BDLIS15}
Buckmaster, Tristan; De Lellis, Camillo; Isett, Philip; Sz\'ekelyhidi, L\'aszl\'o, Jr.
Anomalous dissipation for 1/5-H\"older Euler flows.
\emph{Ann. of Math. (2)} \textbf{182} (2015), no. 1, 127--172.

\bibitem{BMNV}
Buckmaster, Tristan; Masmoudi, Nader; Novack, Matthew; Vicol, Vlad.
\emph{Intermittent Convex Integration for the 3D Euler Equations}.
Ann. of Math. Stud., 217. Princeton University Press, Princeton, NJ, 2023.

\bibitem{CH0}
Chen, Jiajie; Hou, Thomas Y.
Finite time blowup of 2D Boussinesq and 3D Euler equations with $C^{1,\alpha}$ velocity and boundary.
\emph{Comm. Math. Phys.} \textbf{383} (2021), no. 3, 1559--1667.

\bibitem{CH1}
Chen, Jiajie; Hou, Thomas Y.
Stable nearly self-similar blowup of the 2D Boussinesq and 3D Euler equations with smooth data I: Analysis.
(2022), arXiv:2210.07191.

\bibitem{CH}
Chen, Jiajie; Hou, Thomas Y.
Stable nearly self-similar blowup of the 2D Boussinesq and 3D Euler equations with smooth data II: Rigorous numerics.
\emph{Multiscale Model. Simul.} \textbf{23} (2025), no. 1, 25--130.

\bibitem{C}
Constantin, Peter.
On the Euler equations of incompressible fluids.
\emph{Bull. Amer. Math. Soc. (N.S.)} \textbf{44} (2007), no. 4, 603--621.

\bibitem{D}
Dafermos, Constantine M.
Polygonal approximations of solutions of the initial value problem for a conservation law.
\emph{J. Math. Anal. Appl.} \textbf{38} (1972), no. 1, 33--41.

\bibitem{Dbook}
Dafermos, Constantine M.
\textit{Hyperbolic conservation laws in continuum physics.}
Fourth edition. Grundlehren der mathematischen Wissenschaften [Fundamental Principles of Mathematical Sciences], 325. Springer-Verlag,
Berlin, 2016.

\bibitem{DS}
De Lellis, Camillo; Sz\'ekelyhidi, L\'aszl\'o, Jr.
The Euler equations as a differential inclusion.
\emph{Ann. of Math. (2)} \textbf{170} (2009), no. 3, 1417--1436.



\bibitem{DS13}
De Lellis, Camillo; Sz\'ekelyhidi, L\'aszl\'o, Jr.
Dissipative continuous Euler flows.
\emph{Invent. Math.} \textbf{193} (2013), no. 2, 377--407.


\bibitem{DM}
Donadello, Carlotta; Marson, Andrea.
Stability of front tracking solutions to the initial and boundary value problem for systems of conservation laws.
\emph{NoDEA Nonlinear Differential Equations Appl.} \textbf{14} (2007), no. 5-6, 569--592.

\bibitem{DE}
Drivas, Theodore D.; Elgindi, Tarek M.
Singularity formation in the incompressible Euler equation in finite and infinite time.
\emph{EMS Surv. Math. Sci.} \textbf{10} (2023), no. 1, 1--100.

\bibitem{E}
Elgindi, Tarek M.
Finite-time singularity formation for $C^{1,\alpha}$ solutions to the incompressible Euler equations on $\mathbb{R}^3$.
\emph{Ann. of Math. (2)} \textbf{194} (2021), no. 3, 647--727.


\bibitem{EGM}
Elgindi, Tarek M.; Ghoul, Tej-Eddine; Masmoudi, Nader.
On the stability of self-similar blow-up for $C^{1,\alpha}$ solutions to the incompressible Euler equations on $\mathbb{R}^3$.
\emph{Camb. J. Math.} \textbf{9} (2021), no. 4, 1035--1075.


\bibitem{FH}
Friedlander, Susan; Howard, Louis.
Instability in parallel flows revisited.
\emph{Stud. Appl. Math.} \textbf{101} (1998), no. 1, 1--21.



\bibitem{GKM}
Gie, Gung-Min; Kelliher, James P.; Mazzucato, Anna L.
The 3D Euler equations with inflow, outflow and vorticity boundary conditions.
\emph{J. Math. Pures Appl. (9)} \textbf{193} (2025), Paper No. 103628, 62 pp.

\bibitem{GKN}
Giri, Vikram; Kwon, Hyunju; Novack, Matthew.
The $L^3$-based strong Onsager theorem.
To appear in \emph{Ann. of Math.}, arXiv:2305.18509.


\bibitem{GR}
Giri, Vikram; Radu, R$\check{a}$zvan-Octavian.
The Onsager conjecture in 2D: a Newton-Nash iteration.
\emph{Invent. Math.} \textbf{238} (2024), 691--768.



\bibitem{G}
Glimm, James.
Solutions in the large for nonlinear hyperbolic systems of equations.
\emph{Comm. Pure Appl. Math.} \textbf{18} (1965), no. 6, 697--715.

\bibitem{Goodman}
Goodman, Jonathan Bernard.
\textit{Initial boundary value problems for hyperbolic systems of conservation laws.}
Thesis (Ph.D.)--Stanford University. 1983. 60 pp.

\bibitem{G1}
Grisvard, Pierre.
Elliptic Problems in Nonsmooth Domains.
Monographs and Studies in Mathematics, vol. 24. Pitman (Advanced Publishing Program), Boston, MA, 1985.


\bibitem{GKTT}
Guo, Yan; Kim, Chanwoo; Tonon, Daniela; Trescases, Ariane.
Regularity of the Boltzmann equation in convex domains.
\emph{Invent. Math.} \textbf{207} (2017), no. 1, 115--290.

\bibitem{GPW}
Guo, Yan; Pausader, Benoit; Widmayer, Klaus.
Global axisymmetric Euler flows with rotation.
\emph{Invent. Math.} \textbf{231} (2023), no. 1, 169--262.

\bibitem{GY}
Guo, Yan; Yang, Zhuolun.
Asymptotic stability of symmetric flows with viscous inflow boundary condition.
arXiv:2602.17059.

\bibitem{IJ}
Ionescu, Alexandru D.; Jia, Hao.
Non-linear inviscid damping near monotonic shear flows.
\emph{Acta Math.} \textbf{230} (2023), no. 2, 321--399.





\bibitem{I18}
Isett, Philip.
A proof of Onsager's conjecture.
\emph{Ann. of Math. (2)} \textbf{188} (2018), no. 3, 871--963.



\bibitem{J}
John, Fritz.
Formation of singularities in one-dimensional nonlinear wave propagation.
\emph{Comm. Pure Appl. Math.} \textbf{27} (1974), no. 4, 377--405.

\bibitem{KR1}
Kazhikhov, A. V.; Ragulin, V. V.
Nonstationary problems of the flow of an ideal fluid through a bounded region.
\emph{Dokl. Akad. Nauk SSSR} \textbf{250} (1980), no. 6, 1344--1347.

\bibitem{KR}
Kazhikhov, A. V.; Ragulin, V. V.
On a problem of flow for equations of an ideal fluid.
\emph{Zap. Nauchn. Sem. Leningrad. Otdel. Mat. Inst. Steklov. (LOMI)} \textbf{96} (1980), 84--96, 307.


\bibitem{K}
Kochin N. E.
On an existence theorem in hydrodynamics.
\emph{Prikl. Mat. Mekh.} \textbf{20} (1956), no. 2,  153--172.

\bibitem{Lin1}
Lin, Zhiwu. Instability of some ideal plane flows.
\emph{SIAM J. Math. Anal.} \textbf{35} (2003), no. 2, 318--356.

\bibitem{Lin2}
Lin, Zhiwu.
Nonlinear instability of ideal plane flows.
\emph{Int. Math. Res. Not.} (2004), no. 41, 2147--2178.

\bibitem{MZ}
Masmoudi, Nader; Zhao, Weiren.
Nonlinear inviscid damping for a class of monotone shear flows in a finite channel.
\emph{Ann. of Math. (2)} \textbf{199} (2024), no. 3, 1093--1175.


\bibitem{NV}
Novack, Matthew; Vicol, Vlad.
An intermittent Onsager theorem.
\emph{Invent. Math.} \textbf{233} (2023), no. 1, 223--323.



\bibitem{R}
Rayleigh, Lord.
On the Stability, or Instability, of certain Fluid Motions.
\emph{Proc. Lond. Math. Soc.} \textbf{11} (1879/80), 57--70.

\bibitem{RT}
Ren, Xiao; Tian, Gang.
Global solutions to the Euler--Coriolis system.
arXiv:2405.18390.


\bibitem{Sc}
Scheffer, Vladimir.
An inviscid flow with compact support in space-time.
\emph{J. Geom. Anal.} \textbf{3} (1993), no. 4, 343--401.

\bibitem{Sh}
Shnirelman, A.
On the nonuniqueness of weak solution of the Euler equation.
\emph{Comm. Pure Appl. Math.} \textbf{50} (1997), no. 12, 1261--1286.

\bibitem{WLGB}
Wang, Y.; Lai, C.-Y.; G\'omez-Serrano, J.; Buckmaster, T.
Asymptotic self-similar blow-up profile for three-dimensional axisymmetric Euler equations using neural networks.
\emph{Phys. Rev. Lett.} \textbf{130} (2023), no. 24, Paper No. 244002, 6 pp.


\bibitem{Y}
Yudovich, V. I.
A two-dimensional non-stationary problem on the flow of an ideal incompressible fluid through a given region.
\emph{Mat. Sb. (N.S.)} \textbf{64} (106) (1964), 562--588.

\bibitem{Z1}
Zajaczkowski, V. M.
Solvability in the small of a nonstationary flow problem for an ideal incompressible fluid. I.
\emph{Zap. Nauchn. Sem. Leningrad. Otdel. Mat. Inst. Steklov. (LOMI)} \textbf{96} (1980), 39--56, 306.

\bibitem{Z2}
Zajaczkowski, W. M.
Local solvability of nonstationary leakage problem for ideal incompressible fluid. II.
\emph{Pacific J. Math.} \textbf{113} (1984), no. 1, 229--255.
\end{thebibliography}
\end{document}